\newtheorem{thm}{Theorem}[section]
\newtheorem{lem}{Lemma}[section]
\newtheorem{rem}{Remark}[section]
\newcommand{\ml}{\mathcal}
\newcommand{\mb}{\mathbb}
\DeclareMathOperator{\diag}{diag}
\DeclareMathOperator{\divv}{div}
\DeclareMathOperator{\intt}{int}
\DeclareMathOperator{\extt}{ext}
\DeclareMathOperator{\midd}{mid}
\begin{document}

%
%
%
%
%
%
%
%

\title[Elastic waves with Kelvin-Voigt damping]
 {Decay properties and asymptotic profiles for elastic waves with Kelvin-Voigt damping in 2D}

\author[W. Chen]{Wenhui Chen}
\address{Institute of Applied Analysis, Faculty for Mathematics and Computer Science\\
	 Technical University Bergakademie Freiberg\\
	  Pr\"{u}ferstra{\ss}e 9\\
	   09596 Freiberg\\
	    Germany}
\email{wenhui.chen.math@gmail.com}

\subjclass{Primary 35L52; Secondary 35L71, 35B40}

\keywords{{Elastic waves, Kelvin-Voigt damping, decay property, asymptotic profile, weakly coupled system, global existence.}
}
\date{January 1, 2004}

\begin{abstract}
In this paper we consider elastic waves with Kelvin-Voigt damping in 2D. For the linear problem, applying pointwise estimates of the partial Fourier transform of solutions in the Fourier space and asymptotic expansions of eigenvalues and their eigenprojections, we obtain sharp energy decay estimates with additional $L^m$ regularity and $L^p-L^q$ estimates on the conjugate line. Furthermore, we derive asymptotic profiles of solutions under different assumptions of initial data. For the semilinear problem, we use the derived $L^2-L^2$ estimates with additional $L^m$ regularity to prove global (in time) existence of small data solutions to the weakly coupled system. Finally, to deal with elastic waves with Kelvin-Voigt damping in 3D, we apply the Helmholtz decomposition.
\end{abstract}

\maketitle
\section{Introduction}\label{Introduction}
Elastic waves describe particles vibrate in the material holding the property of elasticity. Moreover, when the particles are moving and there exists a force acts on the particles restore them to their original position, elastic waves will be produced. For the sake of briefness, the system governing motion of an infinite, isotropic and homogeneous elastic continuum are given by
\begin{equation}\label{101}
\left\{
\begin{aligned}
&u_{tt}-a^2\Delta u-\left(b^2-a^2\right)\nabla\divv u=0,&&t>0,\,\,x\in\mb{R}^n,\\
&(u,u_t)(0,x)=(u_0,u_1)(x),&&x\in\mb{R}^n,
\end{aligned}
\right.
\end{equation}
where the unknowns $u=u(t,x)\in\mb{R}^n$ for $n=2,3,$ denote the elastic displacement, $t$ stands for the time and $x$ stands for the space-variable. The positive constants $a$ and $b$ are related to the Lam\'e constants satisfying $b>a>0$. The system of elastic waves satisfies the property of finite speed of propagation given by coefficient $b$, which is the speed of propagation of the longitudinal $P$-wave. The coefficient $a$ is the speed of propagation of the transverse $S$-wave. This property have been discussed in the paper \cite{CharaoMenzala1992}.

In general, we cannot expect that the system \eqref{101} models real-world problems, since there exist several kinds resistance, such as fluid resistance and  frictional resistance. People always use damping mechanisms to describe oscillation amplitude, which are reduced through the irreversible removal of the vibratory energy in a mechanical system or a component (c.f. \cite{AkayCarcaterra}).

We recall some results for elastic waves with different damping mechanisms. In recent years, there are some papers devoted the study of linear elastic waves with \emph{friction} ($\theta=0$) or \emph{structural damping} ($\theta\in(0,1]$) as follows:
\begin{equation}\label{linear elastic waves with damping}
\left\{
\begin{aligned}
&u_{tt}-a^2\Delta u-\left(b^2-a^2\right)\nabla\divv u+(-\Delta)^{\theta}u_t=0,&&t>0,\,\,x\in\mb{R}^n,\\
&(u,u_t)(0,x)=(u_0,u_1)(x),&&x\in\mb{R}^n.
\end{aligned}
\right.
\end{equation}
In the paper \cite{IkehataCharaodaLuz2014}, the authors considered the Cauchy problem \eqref{linear elastic waves with damping} with $\theta\in[0,1]$ and initial data taking from $\left(H^{s+1}(\mb{R}^n)\cap L^1(\mb{R}^n)\right)\times\left(H^s(\mb{R}^n)\cap L^1(\mb{R}^n)\right)$ and derived almost sharp energy estimates in $n$ dimensional space, $n\geq2$, by using the energy method in the Fourier space. For the Cauchy problem \eqref{linear elastic waves with damping} in two dimensions with $\theta\in(0,1]$, the author of \cite{Reissig2016} studied qualitative properties of solutions, including Gevrey smoothing if $\theta\in(0,1)$, propagation of singularities if $\theta=1$ and estimates of higher-order energies. In the recent paper \cite{ChenReissig2018}, smoothing effect, energy estimates with initial data belonging to different function spaces and diffusion phenomena for solutions to the Cauchy problem \eqref{linear elastic waves with damping} in three dimensions with $\theta\in[0,1]$ are investigated by applying diagonalization procedure (see \cite{JachmannReissig2008,ReissigWang2005,Yagdjian1997} for further explanations about this method) and the energy method in the Fourier space.

More general structural damping mechanisms are considered in \cite{WuChaiLi2017}. Especially, they studied energy estimates for the following elastic waves with Kelvin-Voigt damping:
\begin{equation*}
\left\{
\begin{aligned}
&u_{tt}-a^2\Delta u-\left(b^2-a^2\right)\nabla\divv u+\left(-a^2\Delta -\left(b^2-a^2\right)\nabla \divv\right)u_t=0,&&t>0,\,\,x\in\mb{R}^n,\\
&(u,u_t)(0,x)=(u_0,u_1)(x),&&x\in\mb{R}^n,
\end{aligned}
\right.
\end{equation*}
for $n\geq2$. By applying the Haraux-Komornik inequality and the energy method in the Fourier space, the authors of \cite{WuChaiLi2017} proved almost sharp decay estimates of the total energy. Nevertheless, sharp estimate of the energy and asymptotic profiles of the solution are still not investigated.

Other studies on the dissipative elastic waves can be found in the literature. We refer to \cite{CharaoIkehata2007,CharaoIkehata2011,HorbachNakabayashi2014} for elastic waves with a classical damping term containing time and spatial variables, \cite{JachmannReissig2008,JachmannReissig2009,Racke1990,RackeWang1998,ReissigWang199901,ReissigWang199902,Wang2002} for elastic waves with thermal dissipative.

The present paper is devoted to the study of the following linear elastic waves with \emph{Kelvin-Voigt damping} (c.f. \cite{MunozRiveraRacke2017,LiuChenLiu1998}) in 2D:
\begin{equation}\label{linearproblem Kelvin-Voigt 2D}
\left\{
\begin{aligned}
&u_{tt}-a^2\Delta u-\left(b^2-a^2\right)\nabla\divv u+\left(-a^2\Delta -\left(b^2-a^2\right)\nabla \divv\right)u_t=0,&&t>0,\,\,x\in\mb{R}^2,\\
&(u,u_t)(0,x)=(u_0,u_1)(x),&&x\in\mb{R}^2,
\end{aligned}
\right.
\end{equation}
where $u=\big(u^{(1)},u^{(2)}\big)^{\mathrm{T}}$, and the weakly coupled system of semilinear elastic waves with Kelvin-Voigt damping in 2D, that is, \eqref{linearproblem Kelvin-Voigt 2D} with nonlinear terms $f(u):=\left(|u^{(2)}|^{p_1},|u^{(1)}|^{p_2}\right)^{\mathrm{T}}$ on the right-hand sides (one can see \eqref{semilinear problem Kelvin-Voigt 2D} later). Considering the case $a^2=b^2$ in \eqref{linearproblem Kelvin-Voigt 2D}, the model will be transferred to the wave equation with viscoelastic damping. Then, one can apply partial Fourier transform and derive estimates of solutions and asymptotic profiles of solutions (c.f. \cite{DabbiccoReissig2014,ReissigEbert2018, Ikehata2014,Michihisa2018}). However, due to the Lam\'e operator $\left(-a^2\Delta-\left(b^2-a^2\right)\nabla\divv\right)$ with $b^2>a^2>0$ appearing in the model, we cannot directly follow the methods from \cite{DabbiccoReissig2014,ReissigEbert2018, Ikehata2014,Michihisa2018} to get the sharp energy estimates of an energy and asymptotic profiles for the solutions. Moreover, in this paper we also deal with the three dimensional elastic waves with Kelvin-Voigt damping by applying the Helmholtz decomposition.

The paper is organized as follows. We first investigate some qualitative properties of solutions to the two dimensional linear Cauchy problem \eqref{linearproblem Kelvin-Voigt 2D} from Section \ref{Section KV 2D estimate of solution} to Section \ref{Section KV 2D refinement of decay estimates}. More precisely, we prepare the pointwise estimates of the partial Fourier transform of solutions in the Fourier space by applying Lemma 2.2 in \cite{UmedaKawashimaShizuta1984} in Section \ref{Section KV 2D estimate of solution}. Then, we obtain energy estimates of solutions to the dissipative system \eqref{linearproblem Kelvin-Voigt 2D} with initial data having additional $L^m$ regularity, and $L^p-L^q$ estimates on the conjugate line. In Subsection \ref{Section KV 2D asymptic expansion} we derive the asymptotic expansions of eigenvalues and their eigenprojections to show the sharpness of previous pointwise estimates. After constructing asymptotic representation in Subsection \ref{Section KV 2D asymptic expresssion}, the asymptotic profiles of solutions are derived in Section \ref{Section KV 2D refinement of decay estimates}, where initial data taking from Bessel potential spaces or weighted $L^1$ spaces. Then, in Section \ref{global existence small data solution KV 2D} we prove global (in time) existence of small data solutions to the weakly coupled system of semilinear elastic waves with Kelvin-Voigt damping. Additionally, the three dimensional elastic waves with Kelvin-Voigt damping is treated in Section \ref{3D model section}. Finally, in Section \ref{concluding remarks section} some concluding remarks complete the paper.
\medskip

\textbf{Notations:} Throughout this paper we will use the following notations. Let $\ml{F}(g)=\hat{g}$ denote the Fourier transform of $g$ and let $\ml{F}^{-1}(g)=\check{g}$ denote the inverse Fourier transform. We denote  by $\ml{S}(\mb{R}^n)$ Schwartz space of rapidly decay function. For $s\in\mb{R}$ and $1\leq p<\infty$, we denote by $H^s_p(\mb{R}^n)$ and $\dot{H}^s_p(\mb{R}^n)$ Bessel and Riesz potential spaces based on $L^p(\mb{R}^n)$ spaces. Furthermore, $\langle D\rangle^s$ and $|D|^s$ stand for the pseudo-differential operators with the symbols $\langle \xi\rangle^s$ and $|\xi|^s$, respectively. Here $\langle \xi\rangle^2:=1+|\xi|^2$. For $\gamma\in[0,\infty)$, the weighted function space $L^{1,\gamma}(\mb{R}^n)$ is defined by
\begin{equation*}
L^{1,\gamma}(\mb{R}^n):=\left\{g\in L^1(\mb{R}^n):\|g\|_{L^{1,\gamma}(\mb{R}^n)}:=\int_{\mb{R}^n}(1+|x|)^{\gamma}|g(x)|dx<\infty\right\}.
\end{equation*}
Let us define the function spaces \begin{equation*}
\ml{A}_{m,s}(\mb{R}^n):=\left(H^{s+1}(\mb{R}^n)\cap L^m(\mb{R}^n)\right)\times \left(H^s(\mb{R}^n)\cap L^m(\mb{R}^n)\right)
\end{equation*} for $s\geq0$ and $m\in[1,2]$ carrying the corresponding norm
\begin{equation*}
\|(g,h)\|_{\ml{A}_{m,s}(\mb{R}^n)}:=\|g\|_{H^{s+1}(\mb{R}^n)}+\|g\|_{L^m(\mb{R}^n)}+\|h\|_{H^{s}(\mb{R}^n)}+\|h\|_{L^m(\mb{R}^n)}.
\end{equation*}
Let $\chi_{\intt},\chi_{\midd},\chi_{\extt}\in \ml{C}^{\infty}\left(\mb{R}^2\right)$ having their supports in $Z_{\intt}(\varepsilon):=\left\{\xi\in\mb{R}^2:|\xi|<\varepsilon\right\}$, $Z_{\midd}(\varepsilon):=\left\{\xi\in\mb{R}^2:\varepsilon\leq|\xi|\leq\frac{1}{\varepsilon}\right\}$ and $Z_{\extt}(\varepsilon):=\left\{\xi\in\mb{R}^2:|\xi|>\frac{1}{\varepsilon}\right\}$, respectively, so that $\chi_{\midd}=1-\chi_{\intt}-\chi_{\extt}$. Here $\varepsilon>0$ is a small constant. We write $g\lesssim h$, when there exists a constant $C>0$ such that $g\leq Ch$. 

\section{Estimates of solutions to the linear Cauchy problem}\label{Section KV 2D estimate of solution}
In this section we will derive estimates of solutions to the linear elastic waves with Kelvin-Voigt damping in two dimensions.

Indeed, the system \eqref{linearproblem Kelvin-Voigt 2D} can be rewritten by the following form:
\begin{equation}\label{sss111 2D}
\begin{split}
u_{tt}-a^2\Delta (u+u_t)-\left(b^2-a^2\right)\left(
{\begin{array}{*{20}c}
	\partial_{x_1}^2 & \partial_{x_1x_2}\\
	\partial_{x_2x_1} & \partial_{x_2}^2 \\
	\end{array}}
\right)(u+u_t)=0.
\end{split}
\end{equation}
Applying the partial Fourier transformation with respect to spatial variable to \eqref{sss111 2D}, i.e.,  $\hat{u}(t,\xi)=\ml{F}_{x\rightarrow \xi}(u(t,x))$ gives
\begin{equation}\label{40002 2D}
\hat{u}_{tt}+|\xi|^{2}A(\eta)\hat{u}_t+|\xi|^2A(\eta)\hat{u}=0,
\end{equation}
where $\eta=\xi/|\xi|\in\mb{S}^1$ and
\[
\begin{split}
A(\eta)=\left(
{\begin{array}{*{20}c}
	a^2+(b^2-a^2)\eta_1^2 & (b^2-a^2)\eta_1\eta_2\\
	(b^2-a^2)\eta_1\eta_2 & a^2+(b^2-a^2)\eta_2^2\\
	\end{array}}
\right).
\end{split}
\]
Because of our assumption $b^2>a^2>0$, the matrix $A(\eta)$ is positive definite. The eigenvalues of $A(\eta)$ are $b^2$ and $a^2$. We introduce the matrices
\[
\begin{split}
M(\eta)=\left(
{\begin{array}{*{20}c}
	\eta_1 & \eta_2\\
	\eta_2 & -\eta_1 \\
	\end{array}}
\right) \,\,\,\,\text{and}\,\,\,\, A_{\text{diag}}(|\xi|)=|\xi|^2\text{diag}\left(b^2,a^2\right).
\end{split}
\]
After the change of variables $v(t,\xi):={M}^{-1}(\eta)\hat{u}(t,\xi)$, we define
\begin{equation*}
W(t,\xi):=\left(
\begin{aligned}
&v_t(t,\xi)+i{A}_{\text{diag}}^{1/2}(|\xi|)v(t,\xi)\\
&v_t(t,\xi)-i{A}_{\text{diag}}^{1/2}(|\xi|)v(t,\xi)
\end{aligned}\right).
\end{equation*}
By the above matrices we can directly compute 
\begin{equation*}
\begin{split}
W_t&=
\left(
\begin{aligned}
&-|\xi|^2{M}^{-1}(\eta){A}(\eta){M}(\eta)(v_{t}+v)+i{A}_{\text{diag}}^{1/2}(|\xi|)v_t\\
&-|\xi|^2{M}^{-1}(\eta){A}(\eta){M}(\eta)(v_{t}+v)-i{A}_{\text{diag}}^{1/2}(|\xi|)v_t
\end{aligned}\right)\\
&=\left(
\begin{aligned}
&-{A}_{\text{diag}}(|\xi|)(v_{t}+v)+i{A}_{\text{diag}}^{1/2}(|\xi|)v_t\\
&-{A}_{\text{diag}}(|\xi|)(v_{t}+v)-i{A}_{\text{diag}}^{1/2}(|\xi|)v_t
\end{aligned}\right)\\
&=\left(
{\begin{array}{*{20}c}
	-\frac{1}{2}{A}_{\text{diag}}(|\xi|)+i{A}^{1/2}_{\text{diag}}(|\xi|)&-\frac{1}{2}{A}_{\text{diag}}(|\xi|)\\
	-\frac{1}{2}{A}_{\text{diag}}(|\xi|)&-\frac{1}{2}{A}_{\text{diag}}(|\xi|)-i{A}^{1/2}_{\text{diag}}(|\xi|)\\
	\end{array}}
\right)W.
\end{split}
\end{equation*}
Therefore, we derive the first-order system
\begin{equation}\label{weshouldKelvinVoigt 2D}
\left\{\begin{aligned}
&W_t+\frac{1}{2}|\xi|^{2}{B}_0W-i|\xi|{B}_1W=0,&&t>0,\,\,\xi\in\mb{R}^2,\\
&W(0,\xi)=W_0(\xi),&&\xi\in\mb{R}^2,
\end{aligned}\right.
\end{equation}
where the coefficient matrices ${B}_0$ and ${B}_1$ are given by
\[
\begin{split}
{B}_0=\left(
{\begin{array}{*{20}c}
	b^2 & 0  & b^2 & 0 \\
	0  & a^2 &  0 & a^2\\
	b^2 & 0  & b^2 & 0 \\
	0 & a^2 & 0  & a^2\\
	\end{array}}
\right) \,\,\,\, \text{and} \,\,\,\, {B}_1=\text{diag}\left(b,a,-b,-a\right).
\end{split}
\]
The solution to \eqref{weshouldKelvinVoigt 2D} is given by 
\begin{equation*}
W(t,\xi)=e^{t\hat{\Phi}(|\xi|)}W_0(\xi),
\end{equation*}where
\begin{equation}\label{Se3 eigenvalue equation 2D}
\begin{split}
\hat{\Phi}(|\xi|)&=-\frac{1}{2}|\xi|^2{B}_0+i|\xi|{B}_1\\
&=\left(
{\begin{array}{*{20}c}
	-\frac{b^2}{2}|\xi|^2+ib|\xi| & 0  & -\frac{b^2}{2}|\xi|^2 & 0 \\
	0  & -\frac{a^2}{2}|\xi|^2+ia|\xi| &  0 & -\frac{a^2}{2}|\xi|^2\\
	-\frac{b^2}{2}|\xi|^2 & 0  & -\frac{b^2}{2}|\xi|^2-ib|\xi| & 0 \\
	0 & -\frac{a^2}{2}|\xi|^2 & 0  & -\frac{a^2}{2}|\xi|^2-ia|\xi|\\
	\end{array}}
\right).
\end{split}
\end{equation}
Moreover, the eigenvalue problem corresponding to \eqref{weshouldKelvinVoigt 2D} is
\begin{equation}\label{Se3 eigenvalue problem 2D}
\lambda\phi+\left(\frac{1}{2}|\xi|^{2}{B}_0-i|\xi|{B}_1\right)\phi=0,
\end{equation}
where $\lambda\in\mb{C}$ and $\phi\in\mb{C}^4$. The eigenvalue $\lambda=\lambda(|\xi|)$ of the problem \eqref{weshouldKelvinVoigt 2D} is the value of $\lambda$ satisfying \eqref{Se3 eigenvalue problem 2D} for $\phi\neq0$.

We now derive  energy estimates basing on the pointwise estimates of the partial Fourier transform of solutions in the Fourier space. In the pioneering paper \cite{UmedaKawashimaShizuta1984} the authors derived pointwise estimates for a general class of symmetric hyperbolic-parabolic systems by using the energy method in the Fourier space. Thus, we may apply Lemma 2.2 in \cite{UmedaKawashimaShizuta1984} to complete the following lemma.
\begin{lem}\label{Pointwise KV 2D}
	The solution $W=W(t,\xi)$ to the Cauchy problem \eqref{weshouldKelvinVoigt 2D} satisfies the following pointwise estimates for any $\xi\in\mb{R}^2$ and $t\geq0$:
	\begin{equation}\label{point wise KV estimates 2D}
	|W(t,\xi)|\lesssim e^{-c\rho(|\xi|)t}|W_0(\xi)|,
	\end{equation}
	where $\rho(|\xi|):=\frac{|\xi|^2}{1+|\xi|^2}$ and $c$ is a positive constant.
\end{lem}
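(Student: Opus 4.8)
The plan is to view the first-order system \eqref{weshouldKelvinVoigt 2D} as a symmetric hyperbolic--parabolic system in the normal form treated in \cite{UmedaKawashimaShizuta1984} and then to invoke Lemma~2.2 there; the only thing to verify by hand is that its three structural hypotheses are met. Note first that, after the diagonalisation performed above, the coefficient matrices ${B}_0$ and ${B}_1$ no longer depend on $\eta=\xi/|\xi|$. I would then record the following: (i) ${B}_1=\diag(b,a,-b,-a)$ is real symmetric, so the first-order (transport) part of \eqref{weshouldKelvinVoigt 2D} is already in symmetric form and the identity matrix serves as a symmetriser; (ii) ${B}_0$ is symmetric and positive semidefinite --- indeed ${B}_0=\left(\begin{smallmatrix}D & D\\ D & D\end{smallmatrix}\right)$ with $D=\diag(b^2,a^2)$, so its eigenvalues are $2b^2,\,2a^2,\,0,\,0$ and $\ker {B}_0=\mathrm{span}\{(1,0,-1,0)^{\mathrm{T}},\,(0,1,0,-1)^{\mathrm{T}}\}$; (iii) the Kawashima--Shizuta genuine-coupling condition holds, because $b>a>0$ makes the four numbers $b,a,-b,-a$ pairwise distinct, so the eigenvectors of ${B}_1$ are the standard basis vectors $e_1,\dots,e_4$ and none of them lies in $\ker {B}_0$; equivalently, ${B}_0\phi\neq0$ for every eigenvector $\phi$ of ${B}_1$.

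Granted (i)--(iii), Lemma~2.2 of \cite{UmedaKawashimaShizuta1984} applies directly and yields exactly the estimate \eqref{point wise KV estimates 2D} with $\rho(|\xi|)=|\xi|^2/(1+|\xi|^2)$. If a self-contained argument is preferred, I would instead carry out the energy method in the Fourier space: from \eqref{weshouldKelvinVoigt 2D} one gets $\tfrac{d}{dt}|W|^2=-|\xi|^2\langle {B}_0W,W\rangle\le 0$, which controls the part of $W$ transverse to $\ker {B}_0$; condition (iii) provides, via the Kawashima--Shizuta lemma, a real skew-symmetric compensating matrix $K$ such that the symmetric part of $K{B}_1$ is positive definite on $\ker {B}_0$, and then one checks that, for a suitable sign of $K$ and $\kappa>0$ sufficiently small, the modified energy
\begin{equation*}
E(t,\xi):=|W(t,\xi)|^2+\kappa\,\frac{|\xi|}{1+|\xi|^2}\,\mathrm{Im}\,\langle K\,W(t,\xi),\,W(t,\xi)\rangle
\end{equation*}
satisfies $E\simeq|W|^2$ together with $\tfrac{d}{dt}E\le-c\,\rho(|\xi|)\,E$; Gr\"{o}nwall's lemma then gives \eqref{point wise KV estimates 2D}.

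The algebraic checks (i)--(iii) are routine; the part I expect to be the real obstacle is the uniformity of the decay rate in $\xi$, which is why the estimate does not follow from eigenvalue asymptotics alone. Concretely, $\hat{\Phi}(|\xi|)$ in \eqref{Se3 eigenvalue equation 2D} decouples into the two $2\times2$ blocks acting on $(W_1,W_3)$ and on $(W_2,W_4)$, whose eigenvalues can be written down explicitly: for the first they are $\tfrac{b|\xi|}{2}\big(-b|\xi|\pm\sqrt{b^2|\xi|^2-4}\,\big)$, and the analogue with $a$ in place of $b$ for the second. These have real part $\simeq -|\xi|^2$ as $|\xi|\to 0$ and behave like $-1$ and $-|\xi|^2$ respectively as $|\xi|\to\infty$, consistently with $-\rho(|\xi|)$; but at the crossing frequencies $|\xi|=2/b$ and $|\xi|=2/a$ the two eigenvalues of a block coalesce into a genuine Jordan block and the diagonalising transformation degenerates, so $\|e^{t\hat{\Phi}(|\xi|)}\|$ cannot be bounded merely by the maximum of $|e^{t\lambda}|$ over the eigenvalues. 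Handling these crossing frequencies uniformly --- equivalently, obtaining the constant $c$ in \eqref{point wise KV estimates 2D} independent of $\xi$ --- is precisely what the compensating-function device of \cite{UmedaKawashimaShizuta1984} is designed to do, and it is the one genuinely non-trivial ingredient of the proof.
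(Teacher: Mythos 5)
Your proposal is correct and follows essentially the same route as the paper: both reduce the claim to Lemma~2.2 of \cite{UmedaKawashimaShizuta1984} after checking that $B_1$ is real symmetric and $\tfrac12 B_0$ is real symmetric positive semi-definite. The only cosmetic difference is that you verify the genuine-coupling condition (no eigenvector of $B_1$ lies in $\ker B_0$) and appeal to the general existence of a compensating matrix, whereas the paper exhibits the skew-symmetric matrix $\widetilde{K}$ explicitly and checks that $-\widetilde{K}B_1+\tfrac12 B_0=\diag\left(\tfrac{b^2}{2},\tfrac{a^2}{2},\tfrac{b^2}{2},\tfrac{a^2}{2}\right)$ is positive definite; these are equivalent, and your remark about the eigenvalue crossings at $|\xi|=2/b$ and $|\xi|=2/a$ correctly identifies why the compensating-function device, rather than eigenvalue asymptotics, is needed for a $\xi$-uniform constant $c$.
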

\begin{rem}
	From the asymptotic behavior of eigenvalues $\lambda_j(|\xi|)$ for $j=1,\dots,4,$ which will be shown later in \eqref{asym KV 2D} and \eqref{asym KV large 2D}, the dissipative structure of the system \eqref{weshouldKelvinVoigt 2D} can be characterized by the property
	\begin{equation*}
	\emph{Re}\lambda_j(|\xi|)\leq -c\rho(|\xi|).
	\end{equation*}
	Moreover, according to the asymptotic expansions of eigenvalues for $|\xi|\rightarrow0$ and $|\xi|\rightarrow\infty$, our pointwise estimates of the partial Fourier transform of solutions stated in Lemma \ref{Pointwise KV 2D} are sharp.
\end{rem}
\begin{proof}
	Let us define a real and antisymmetric matrix $\widetilde{K}$ by
	\[
	\begin{split}
	\widetilde{K}:=\left(
	{\begin{array}{*{20}c}
		0 & 0 & -\frac{b}{2} & 0\\
		0 & 0 & 0 & -\frac{a}{2}\\
		\frac{b}{2} & 0 & 0& 0\\
		0 & \frac{a}{2} & 0 &0 \\
		\end{array}}
	\right).
	\end{split}
	\]
	The coefficient matrices in \eqref{weshouldKelvinVoigt 2D}, i.e.,  $\frac{1}{2}B_0$ and $-B_1$ are real symmetric. Moreover, the matrix $\frac{1}{2}B_0$ is positive semi-definite. According to Lemma 2.2 in \cite{UmedaKawashimaShizuta1984}, due to the fact that $-\widetilde{K}B_1+\frac{1}{2}B_0$ is positive definite, we can derive
	\begin{align*}
	|W(t,\xi)|\lesssim e^{-\frac{c|\xi|^2}{1+|\xi|^2}t}|W_0(\xi)|,
	\end{align*}
	for all $t\geq0$ and $\xi\in\mb{R}^2$, with the positive constant $c$.
\end{proof}

We concentrate on the estimates of the classical energy and higher-order energy of solutions with initial data taking from $H^{s}\big(\mb{R}^2\big)\cap L^m\big(\mb{R}^2\big)$. Because the proof is quite standard, we only sketch it.
\begin{thm}\label{KV estimate for W}
	Let us consider the Cauchy problem \eqref{linearproblem Kelvin-Voigt 2D} with initial data  satisfying $\left(|D|u^{(k)}_{0},u^{(k)}_{1}\right)\in \left(H^{s}\big(\mb{R}^2\big)\cap L^m\big(\mb{R}^2\big)\right)\times \left(H^{s}\big(\mb{R}^2\big)\cap L^m\big(\mb{R}^2\big)\right)$ for $k=1,2$, where $s\geq0$ and  $m\in[1,2]$. Then, we have the following estimates for the energies of higher-order:
	\begin{equation*}
	\begin{split}
	&\left\||D|^{s+1}u^{(k)}(t,\cdot)\right\|_{L^2(\mb{R}^2)}+\left\||D|^su^{(k)}_t(t,\cdot)\right\|_{L^2(\mb{R}^2)}\\
	&\qquad\qquad\lesssim(1+t)^{-\frac{2-m+ms}{2m}}\sum\limits_{k=1}^2\left\|\left(|D|u^{(k)}_{0},u^{(k)}_1\right)\right\|_{(H^{s}(\mb{R}^2)\cap L^m(\mb{R}^2))\times (H^{s}(\mb{R}^2)\cap L^m(\mb{R}^2))}.
	\end{split}
	\end{equation*}
\end{thm}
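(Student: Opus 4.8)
The plan is to transfer the problem to the Fourier space, use the pointwise estimate of Lemma \ref{Pointwise KV 2D}, and then split the frequency domain into the three zones $Z_{\intt}(\varepsilon)$, $Z_{\midd}(\varepsilon)$, $Z_{\extt}(\varepsilon)$. First I would recall the relation between the original unknown $\hat u$ and the transformed vector $W$. Since $W$ is built from $v_t\pm i A_{\mathrm{diag}}^{1/2}v$ with $v=M^{-1}(\eta)\hat u$, and $|M(\eta)|=|M^{-1}(\eta)|=1$ on $\mb{S}^1$, one recovers $v_t$ and $|\xi|v$ (hence $|\xi|\hat u$ and $\hat u_t$) as linear combinations of the components of $W$ with bounded coefficients. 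Concretely $|\xi|\,|\hat u(t,\xi)|\lesssim |W(t,\xi)|$ and $|\hat u_t(t,\xi)|\lesssim |W(t,\xi)|$, while for the data $|W_0(\xi)|\lesssim |\xi|\,|\hat u_0(\xi)|+|\hat u_1(\xi)|=:|\widehat{v}_0(\xi)|$ where I abbreviate the combined datum. By Plancherel's theorem,
\begin{equation*}
\left\||D|^{s+1}u^{(k)}(t,\cdot)\right\|_{L^2}+\left\||D|^su^{(k)}_t(t,\cdot)\right\|_{L^2}\lesssim \left\||\xi|^s\,W(t,\xi)\right\|_{L^2_\xi}\lesssim \left\||\xi|^s e^{-c\rho(|\xi|)t}|W_0(\xi)|\right\|_{L^2_\xi},
\end{equation*}
so everything reduces to estimating the weighted multiplier $|\xi|^s e^{-c\rho(|\xi|)t}$ applied to the datum.

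Next I would treat the three zones separately. In the exterior zone $|\xi|>1/\varepsilon$ we have $\rho(|\xi|)\geq c'>0$, so $e^{-c\rho(|\xi|)t}\lesssim e^{-c''t}$ provides exponential decay and $\||\xi|^s W_0\|_{L^2(Z_{\extt})}\lesssim \|(|D|u_0^{(k)},u_1^{(k)})\|_{H^s\times H^s}$ absorbs the $|\xi|^s$ weight; this term decays faster than any polynomial rate, so it is harmless. In the middle zone $\varepsilon\le|\xi|\le 1/\varepsilon$ we again have $\rho(|\xi|)\geq c'>0$ and $|\xi|^s$ bounded, so the same exponential bound works with the $L^2$ norm of the data. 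The decisive contribution is the interior zone $|\xi|<\varepsilon$, where $\rho(|\xi|)\sim|\xi|^2$, hence $e^{-c\rho(|\xi|)t}\lesssim e^{-c_0|\xi|^2 t}$. Here I would use Hölder's inequality in Fourier space with exponents tied to $m$: writing $\frac{1}{m'}=1-\frac1m$ and its Fourier dual, estimate
\begin{equation*}
\left\||\xi|^s e^{-c_0|\xi|^2t}|W_0(\xi)|\right\|_{L^2(Z_{\intt})}\lesssim \left\||\xi|^s e^{-c_0|\xi|^2t}\right\|_{L^{r}(Z_{\intt})}\|W_0\|_{L^{m'}},
\end{equation*}
with $\frac12=\frac1r+\frac1{m'}$, i.e. $\frac1r=\frac1m-\frac12$, and then apply the Hausdorff–Young inequality $\|W_0\|_{L^{m'}}\lesssim \|(|D|u_0^{(k)},u_1^{(k)})\|_{L^m\times L^m}$ (valid for $m\in[1,2]$). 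A change of variables $\xi\mapsto t^{-1/2}\xi$ in two dimensions gives $\left\||\xi|^s e^{-c_0|\xi|^2t}\right\|_{L^{r}(\mb{R}^2)}\lesssim t^{-\frac{s}{2}-\frac1r}=t^{-\frac{s}{2}-\frac1m+\frac12}=t^{-\frac{2-m+ms}{2m}}$, which is exactly the claimed rate.

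Finally I would glue the zones: for $t\in[0,1]$ the bound is trivial from the $L^2$ energy inequality (which itself follows from $|W(t,\xi)|\lesssim|W_0(\xi)|$ and $|W_0|\lesssim$ data), and for $t\geq1$ the interior-zone polynomial decay dominates the exponentially small middle and exterior contributions, so altogether one gets the factor $(1+t)^{-\frac{2-m+ms}{2m}}$. Summing over $k=1,2$ produces the stated sum on the right-hand side (since the scalar components decouple in the diagonalized system, each $u^{(k)}$ is controlled by both data components after the $M(\eta)$ rotation). The main obstacle, modest as it is, is bookkeeping the algebraic reduction from $W$ back to $(|\xi|\hat u,\hat u_t)$ cleanly — verifying that the matrices $M(\eta)$, $A_{\mathrm{diag}}^{1/2}(|\xi|)$ and the passage $W\leftrightarrow(v,v_t)$ all contribute only bounded, $|\xi|$-independent (or at worst homogeneous degree matching the claimed weights) factors, so that no spurious powers of $|\xi|$ enter; once that is pinned down the zone estimates are the standard ones for damped-wave-type multipliers and this is why the authors only sketch the argument.
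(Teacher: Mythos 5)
Your proposal is correct and follows essentially the same route as the paper: Plancherel plus the pointwise bound of Lemma \ref{Pointwise KV 2D}, a splitting into small versus middle/large frequencies, H\"older together with Hausdorff--Young on the interior zone to produce the rate $(1+t)^{-\frac{2-m+ms}{2m}}$, and exponential decay with the $H^s$ norm elsewhere. You additionally spell out the (correct) equivalence $|W|\simeq |\xi|\,|\hat u|+|\hat u_t|$ coming from the orthogonality of $M(\eta)$ and the invertibility of $A_{\mathrm{diag}}^{1/2}$, which the paper leaves implicit in its sketch.
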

\begin{proof}
	We can derive the following estimates by the Parseval-Plancherel theorem and the pointwise estimates of the partial Fourier transform of solutions in the Fourier space in Lemma \ref{Pointwise KV 2D}:
	\begin{equation}
	\begin{split}
	\left\||D|^s\ml{F}_{\xi\rightarrow x}^{-1}(W)(t,\cdot)\right\|_{L^2(\mb{R}^2)}&=\||\xi|^sW(t,\xi)\|_{L^2(\mb{R}^2)}\lesssim\left\||\xi|^se^{-c\rho(|\xi|)t}W_0(\xi)\right\|_{L^2(\mb{R}^2)}\\
	&\lesssim\left\|\chi_{\intt}(\xi)|\xi|^se^{-c|\xi|^2t}W_0(\xi)\right\|_{L^2(\mb{R}^2)}\\
	&\quad+\left\|(\chi_{\midd}(\xi)+\chi_{\extt}(\xi))|\xi|^se^{-ct}W_0(\xi)\right\|_{L^2(\mb{R}^2)}.
	\end{split}
	\end{equation}
	For small frequencies, we apply H\"older's inequality and the Hausdorff-Young inequality to get
	\begin{equation*}
	\left\|\chi_{\intt}(\xi)|\xi|^se^{-c|\xi|^2t}W_0(\xi)\right\|_{L^2(\mb{R}^2)}\lesssim(1+t)^{-\frac{2-m+ms}{2m}}\left\|\ml{F}^{-1}(W_0)\right\|_{L^m(\mb{R}^2)}.
	\end{equation*}
	For middle and large frequencies, we immediate obtain
	\begin{equation*}
	\left\|(\chi_{\midd}(\xi)+\chi_{\extt}(\xi))|\xi|^se^{-ct}W_0(\xi)\right\|_{L^2(\mb{R}^2)}\lesssim e^{-ct}\left\|\ml{F}^{-1}(W_0)\right\|_{H^s(\mb{R}^2)}.
	\end{equation*}
	Together them completes the proof.
\end{proof}
\begin{rem}
	To derive the estimate of the solution itself to \eqref{linearproblem Kelvin-Voigt 2D} with 
	\begin{equation*}
	\left(|D|u_0^{(k)},u_1^{(k)}\right)\in\left(L^2\big(\mb{R}^2\big)\cap L^m\big(\mb{R}^2\big)\right)\times \left(L^2\big(\mb{R}^2\big)\cap L^m\big(\mb{R}^2\big)\right),
	\end{equation*} we need to estimate $\left\|\chi_{\intt}(\xi)|\xi|^{-1}W(t,\xi)\right\|_{L^2(\mb{R}^2)}$. Then, using H\"older's inequality, we have to derive the estimate of the following term: 
	\begin{equation*}
	\left\|\chi_{\intt}(\xi)|\xi|^{-1}e^{-c|\xi|^2t}\right\|_{L^\frac{2m}{2-m}(\mb{R}^2)}.
	\end{equation*}
	Nevertheless, due to a strong influence of the singularity for $|\xi|\rightarrow+0$, i.e., non integrability, the following inequality does not hold for all $t\geq0$ and $m\in[1,2]$:
	\begin{equation*}
	\left\|\chi_{\intt}(\xi)|\xi|^{-1}e^{-c|\xi|^2t}\right\|_{L^\frac{2m}{2-m}(\mb{R}^2)}<\infty.
	\end{equation*}
\end{rem}
\begin{rem}
	It is not reasonable for us to compare the estimates in Theorem \ref{KV estimate for W} with those in \cite{WuChaiLi2017} due to different assumptions of the data spaces. In the recent paper \cite{WuChaiLi2017}, the authors proved estimates for the classical energy to \eqref{linearproblem Kelvin-Voigt 2D} with initial data taking from $\left(H^{1}\big(\mb{R}^2\big)\cap L^1\big(\mb{R}^2\big)\right)\times \left(L^2\big(\mb{R}^2\big)\cap L^1\big(\mb{R}^2\big)\right)$ by using the energy method in the Fourier space and the Haraux-Komornik inequality.
\end{rem}

Next, we will establish $L^p-L^q$ estimates on the conjugate line by applying Lemma \ref{Pointwise KV 2D}.
\begin{thm}\label{LpLqawayconjugateline KV 2D} Let us consider the Cauchy problem \eqref{linearproblem Kelvin-Voigt 2D} with initial data satisfying $\left(|D|u^{(k)}_{0},u^{(k)}_{1}\right)\in\ml{S}\big(\mb{R}^2\big)\times \ml{S}\big(\mb{R}^2\big)$ for $k=1,2$. Then, the following estimates hold for the derivatives of the solution:
	\begin{equation*}
	\begin{split}
	&\left\||D|^{s+1}u^{(k)}(t,\cdot)\right\|_{L^q(\mb{R}^2)}+\left\||D|^su_t^{(k)}(t,\cdot)\right\|_{L^q(\mb{R}^2)}\\
	&\qquad\qquad\qquad\lesssim(1+t)^{-\frac{1}{2}\left(s+2\left(\frac{1}{p}-\frac{1}{q}\right)\right)}\sum\limits_{k=1}^2\left\|\left(|D|u^{(k)}_{0},u^{(k)}_1\right)\right\|_{H^{N_{s,p}}_p(\mb{R}^2)\times H^{N_{s,p}}_p(\mb{R}^2)},
	\end{split}
	\end{equation*}
	where $s\geq0$, $1\leq p\leq2$, $\frac{1}{p}+\frac{1}{q}=1$ and $N_{s,p}>s+2\left(\frac{1}{p}-\frac{1}{q}\right)$.
\end{thm}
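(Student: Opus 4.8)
The plan is to establish the stated $L^p$--$L^q$ estimate on the conjugate line via the usual three-zone decomposition of the phase space, working with the transformed variable $W(t,\xi)=e^{t\hat\Phi(|\xi|)}W_0(\xi)$ and then transferring back to $u$ by undoing the change of variables $v=M^{-1}(\eta)\hat u$ and the relation between $(v_t,v)$ and $W$. Since $M(\eta)$ is a real matrix with $|\det M(\eta)|=1$ and entries bounded uniformly on $\mathbb S^1$, and since $|D|^{s+1}u^{(k)}$ and $|D|^s u_t^{(k)}$ are controlled (in Fourier variables, after multiplying by the right powers of $|\xi|$) by $|\xi|^s W(t,\xi)$ together with suitable powers of $|\xi|$ acting on $W_0$, it suffices to prove the estimate for $|D|^s\mathcal F^{-1}(W)$ with data $W_0$ derived from $\left(|D|u^{(k)}_0,u^{(k)}_1\right)$.

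First I would invoke Lemma \ref{Pointwise KV 2D} to bound $|W(t,\xi)|\lesssim e^{-c\rho(|\xi|)t}|W_0(\xi)|$ and split $1=\chi_{\intt}+\chi_{\midd}+\chi_{\extt}$. For the low-frequency zone $Z_{\intt}(\varepsilon)$, where $\rho(|\xi|)\sim|\xi|^2$, the operator is $u\mapsto\mathcal F^{-1}\big(\chi_{\intt}(\xi)|\xi|^s e^{-c|\xi|^2 t}\widehat{W_0}(\xi)\big)$; I would estimate this by interpolating the two endpoint cases. The $L^2$--$L^2$ bound follows directly from the Plancherel theorem and $\sup_\xi |\xi|^s e^{-c|\xi|^2 t}\lesssim (1+t)^{-s/2}$. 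The $L^1$--$L^\infty$ bound follows from the Hausdorff--Young inequality $\|\mathcal F^{-1}(\cdot)\|_{L^\infty}\lesssim\|\cdot\|_{L^1}$ applied to the multiplier, giving $\big\|\chi_{\intt}(\xi)|\xi|^s e^{-c|\xi|^2 t}\big\|_{L^1(\mathbb R^2)}\lesssim (1+t)^{-(s+2)/2}$ by the substitution $\xi\mapsto\xi/\sqrt t$. The Riesz--Thorin interpolation theorem then yields, for $1\le p\le 2$ with $1/p+1/q=1$, the decay rate $(1+t)^{-\frac12(s+2(\frac1p-\frac1q))}$ with no loss of regularity in the low-frequency part (the $\chi_{\intt}$ cutoff makes $|\xi|^s$ harmless).

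For the middle- and high-frequency zones $Z_{\midd}(\varepsilon)\cup Z_{\extt}(\varepsilon)$, where $\rho(|\xi|)\gtrsim c_\varepsilon>0$ on the middle zone and $\rho(|\xi|)\to 1$ on the exterior zone, the factor $e^{-c\rho(|\xi|)t}$ produces exponential decay, but one pays regularity: the symbol $|\xi|^s e^{-c\rho(|\xi|)t}$ is not bounded for large $|\xi|$ uniformly, so I would absorb $|\xi|^{s}$ together with a fixed amount of extra regularity $N_{s,p}>s+2(\frac1p-\frac1q)$ into the data norm $H^{N_{s,p}}_p$. Here the relevant tool is an $L^p$--$L^q$ estimate for Fourier multipliers supported away from the origin: writing the multiplier as $m(t,\xi)=\langle\xi\rangle^{-N_{s,p}}|\xi|^s e^{-c\rho(|\xi|)t}\cdot\langle\xi\rangle^{N_{s,p}}$, one checks that $\langle\xi\rangle^{-N_{s,p}}|\xi|^s e^{-c\rho(|\xi|)t}$ and its derivatives up to order $2$ decay fast enough for a Mikhlin--Hörmander type argument, or more simply, one estimates $\|\mathcal F^{-1}(\cdot)\|_{L^1}$ of this piece uniformly in $t$ (using $e^{-ct}$ decay and the decay in $|\xi|$ from the extra $\langle\xi\rangle^{-N_{s,p}}$, which is integrable against the remaining algebraic growth), and convolution with an $L^1$ kernel maps $L^p\to L^p\hookrightarrow L^q$ after a further Hausdorff--Young/Sobolev step on the bounded-frequency middle piece. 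This gives a bound $\lesssim e^{-ct}\|W_0\|_{H^{N_{s,p}}_p}$, which is absorbed into $(1+t)^{-\frac12(s+2(\frac1p-\frac1q))}$. Combining the three zones and translating back to $u$ completes the proof.

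The main obstacle I anticipate is the middle/high-frequency analysis: unlike the $L^2$--$L^2$ case (Theorem \ref{KV estimate for W}), where Plancherel makes these zones trivial, here one needs a genuine $L^p$--$L^q$ mapping property for a multiplier that is smooth but whose $L^\infty$ norm grows polynomially in $|\xi|$, which forces the precise choice $N_{s,p}>s+2(\frac1p-\frac1q)$ and a careful Fourier-multiplier estimate (Mikhlin--Hörmander, or a direct kernel bound exploiting the parabolic-type decay $e^{-c\rho(|\xi|)t}$ together with the smoothing $\langle\xi\rangle^{-N_{s,p}}$). A secondary technical point is handling the angular matrix $M(\eta)$, which is merely continuous (not smooth) across $\xi=0$; this is circumvented because all the multiplier operations needed are scalar in $|\xi|$ once one observes $M(\eta)$ and $M^{-1}(\eta)$ are bounded and the change of variables is an $L^q$-bounded pointwise (in $\xi$) linear map, or by arguing directly on the scalar components $v^{(j)}$.
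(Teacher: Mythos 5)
Your low-frequency argument is exactly the paper's: Plancherel for the $L^2$--$L^2$ endpoint, Hausdorff--Young applied to the multiplier $\chi_{\intt}(\xi)|\xi|^s e^{-c|\xi|^2t}$ for the $L^1$--$L^\infty$ endpoint, and Riesz--Thorin to produce the rate $(1+t)^{-\frac12\left(s+2\left(\frac1p-\frac1q\right)\right)}$; the middle zone is likewise handled identically (exponential decay at both endpoints, then interpolation). The genuine gap is in your high-frequency zone. The step ``convolution with an $L^1$ kernel maps $L^p\to L^p\hookrightarrow L^q$'' is false: there is no embedding $L^p\big(\mb{R}^2\big)\hookrightarrow L^q\big(\mb{R}^2\big)$ for $p<q$, and the exterior zone $|\xi|>\frac1\varepsilon$ is unbounded in frequency, so no Bernstein-type inequality rescues it there. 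This is precisely the place where the passage from exponent $p$ to exponent $q$ must be paid for, and neither an $L^1$ kernel bound nor a Mikhlin--H\"ormander multiplier theorem (an $L^p\to L^p$ statement, which moreover fails at $p=1$, a case the theorem includes) supplies it. Your fallback justification for the kernel being in $L^1$ (``the decay in $|\xi|$ from the extra $\langle\xi\rangle^{-N_{s,p}}$ \dots is integrable'') is also unsound as written: the symbol decays only like $|\xi|^{-(N_{s,p}-s)}$, and $N_{s,p}-s$ may be arbitrarily small (take $p$ close to $2$), so the symbol itself need not be integrable on $\mb{R}^2$; one would have to integrate by parts and do genuine kernel estimates.

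The paper avoids all of this with an elementary argument for the exterior zone: the $L^1\to L^\infty$ endpoint is obtained from $\left\|\chi_{\extt}(\xi)|\xi|^s e^{-c\rho(|\xi|)t}W_0\right\|_{L^1(\mb{R}^2)}\leq\left\|\langle\xi\rangle^{-(2+\epsilon)}\right\|_{L^1(\mb{R}^2)}\left\|\langle\xi\rangle^{s+2+\epsilon}W_0\right\|_{L^\infty(\mb{R}^2)}\lesssim\left\|\langle D\rangle^{s+2+\epsilon}\ml{F}^{-1}(W_0)\right\|_{L^1(\mb{R}^2)}$, i.e.\ Hausdorff--Young used twice, with the factor $e^{-ct}$ extracted for $t\geq1$; interpolating this with the trivial $L^2\to L^2$ endpoint $\left\|\langle D\rangle^s\ml{F}^{-1}(W_0)\right\|_{L^2(\mb{R}^2)}$ via Riesz--Thorin yields $e^{-c\left(\frac1p-\frac1q\right)t}\left\|\langle D\rangle^{s+(2+\epsilon)\left(\frac1p-\frac1q\right)}\ml{F}^{-1}(W_0)\right\|_{L^p(\mb{R}^2)}$, which is exactly where the loss $N_{s,p}>s+2\left(\frac1p-\frac1q\right)$ comes from. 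Replace your high-frequency paragraph with this interpolation-of-endpoints argument; the remainder of your proposal then coincides with the paper's proof.
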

\begin{rem} If we are interested in the case $p\in(1,2]$, then we can choose $N_{s,p}=s+2\left(\frac{1}{p}-\frac{1}{q}\right)$.
\end{rem}
\begin{proof} The proof of $L^p-L^q$ decay estimates on the conjugate line is divided into three parts.
	
	\emph{Part 1:} $L^p-L^q$ decay estimates for small frequencies.\\
	According to the derived $L^2-L^2$ estimates in Theorem \ref{KV estimate for W} we have
	\begin{equation}\label{L2-L2 KV 2D}
	\left\|\chi_{\intt}(D)|D|^s\ml{F}^{-1}_{\xi\rightarrow x}(W)(t,\cdot)\right\|_{L^2(\mb{R}^2)}\lesssim(1+t)^{-\frac{s}{2}}\left\|\ml{F}^{-1}(W_0)\right\|_{L^2(\mb{R}^2)}.
	\end{equation}
	According to Lemma \ref{Pointwise KV 2D} we give a $L^1-L^{\infty}$ estimate by applying the Hausdorff-Young inequality
	\begin{equation}\label{L1-Linfty KV 2D}
	\begin{split}
	\left\|\chi_{\intt}(D)|D|^s\ml{F}^{-1}_{\xi\rightarrow x}(W)(t,\cdot)\right\|_{L^{\infty}(\mb{R}^2)}
	&\lesssim\left\|\chi_{\intt}(\xi)|\xi|^se^{-c|\xi|^2t}\right\|_{L^1(\mb{R}^2)}\|W_0\|_{L^{\infty}(\mb{R}^2)}\\
	&\lesssim\left\|\chi_{\intt}(\xi)|\xi|^se^{-c|\xi|^{2}t}\right\|_{L^1(\mb{R}^2)}\left\|\ml{F}^{-1}(W_0)\right\|_{L^{1}(\mb{R}^2)}\\
	&\lesssim\left(\int_0^{\varepsilon}r^{s+1}e^{-cr^{2}t}dr\right)\left\|\ml{F}^{-1}(W_0)\right\|_{L^{1}(\mb{R}^2)}\\
	&\lesssim(1+t)^{-\frac{s+2}{2}}\left\|\ml{F}^{-1}(W_0)\right\|_{L^{1}(\mb{R}^2)}.
	\end{split}
	\end{equation}
	Combining \eqref{L2-L2 KV 2D} with \eqref{L1-Linfty KV 2D} and applying the Riesz-Thorin interpolation theorem, we obtain
	\begin{equation}\label{Lp-Lq small freq KV 2D}
	\begin{split}
	&\left\|\chi_{\intt}(D)|D|^s\ml{F}^{-1}_{\xi\rightarrow x}(W)(t,\cdot)\right\|_{L^{q}(\mb{R}^2)}\lesssim(1+t)^{-\frac{1}{2}\left(s+2\left(\frac{1}{p}-\frac{1}{q}\right)\right)}\left\|\ml{F}^{-1}(W_0)\right\|_{L^{p}(\mb{R}^2)},
	\end{split}
	\end{equation}
	where $1\leq p\leq 2$ and $\frac{1}{p}+\frac{1}{q}=1$.
	
	\emph{Part 2:} $L^p-L^q$ decay estimates for large frequencies.\\
	For $L^2-L^2$ estimates, we have
	\begin{equation*}
	\left\|\chi_{\extt}(D)|D|^s\ml{F}^{-1}_{\xi\rightarrow x}(W)(t,\cdot)\right\|_{L^2(\mb{R}^2)}\lesssim\left\|\langle D\rangle^s\ml{F}^{-1}(W_0)\right\|_{L^2(\mb{R}^2)}.
	\end{equation*}
	We now derive $L^1-L^{\infty}$ estimates. For $t\in(0,1]$, the following estimate holds:
	\begin{equation}\label{small time L1-Linfty KV 2D}
	\begin{split}
	\left\|\chi_{\extt}(D)|D|^s\ml{F}^{-1}_{\xi\rightarrow x}(W)(t,\cdot)\right\|_{L^{\infty}(\mb{R}^2)}&\lesssim\left\|\langle\xi\rangle^{-(2+\epsilon)}\right\|_{L^1(\mb{R}^2)}\left\|\langle\xi\rangle^{s+2+\epsilon}W_0\right\|_{L^{\infty}(\mb{R}^2)}\\
	&\lesssim\left\|\langle D\rangle^{s+2+\epsilon}\ml{F}^{-1}(W_0)\right\|_{L^1(\mb{R}^2)},
	\end{split}
	\end{equation}
	where $\epsilon$ is an arbitrary small positive constant. For $t\in[1,\infty)$, we immediately obtain exponential decay. Thus, combining with \eqref{small time L1-Linfty KV 2D} we derive
	\begin{equation*}
	\left\|\chi_{\extt}(D)|D|^s\ml{F}^{-1}_{\xi\rightarrow x}(W)(t,\cdot)\right\|_{L^{\infty}(\mb{R}^2)}\lesssim e^{-ct}\left\|\langle D\rangle^{s+2+\epsilon}\ml{F}^{-1}(W_0)\right\|_{L^1(\mb{R}^2)}.
	\end{equation*}
	Using the interpolation theorem again, we obtain
	\begin{equation}\label{Lp-Lq large freq KV 2D}
	\left\|\chi_{\extt}(D)|D|^s\ml{F}^{-1}_{\xi\rightarrow x}(W)(t,\cdot)\right\|_{L^{q}(\mb{R}^2)}\lesssim e^{-c\left(\frac{1}{p}-\frac{1}{q}\right)t}\left\|\langle D\rangle^{s+(2+\epsilon)\left(\frac{1}{p}-\frac{1}{q}\right)}\ml{F}^{-1}(W_0)\right\|_{L^p(\mb{R}^2)},
	\end{equation}
	where $1\leq p\leq 2$ and $\frac{1}{p}+\frac{1}{q}=1$.
	
	\emph{Part 3:} $L^p-L^q$ decay estimates for middle frequencies.\\
	According to the pointwise estimate of the partial Fourier transform of solutions in Lemma \ref{Pointwise KV 2D} when $\varepsilon\leq|\xi|\leq\frac{1}{\varepsilon}$, we derive
	\begin{equation*}
	\begin{split}
	\left\|\chi_{\midd}(D)|D|^s\ml{F}_{\xi\rightarrow x}^{-1}(W)(t,\cdot)\right\|_{L^2(\mb{R}^2)}&\lesssim e^{-ct}\left\|\ml{F}^{-1}(W_0)\right\|_{L^2(\mb{R}^2)},\\
	\left\|\chi_{\midd}(D)|D|^s\ml{F}_{\xi\rightarrow x}^{-1}(W)(t,\cdot)\right\|_{L^{\infty}(\mb{R}^2)}&\lesssim e^{-ct}\left\| \ml{F}^{-1}(W_0)\right\|_{L^1(\mb{R}^2)}.
	\end{split}
	\end{equation*}
	Then, we have from the interpolation theorem
	\begin{equation}\label{Lp-Lq middle freq KV 2D}
	\left\|\chi_{\midd}(D)|D|^s\ml{F}^{-1}_{\xi\rightarrow x}(W)(t,\cdot)\right\|_{L^{q}(\mb{R}^2)}\lesssim e^{-ct}\left\|\ml{F}^{-1}(W_0)\right\|_{L^p(\mb{R}^2)},
	\end{equation}
	where $1\leq p\leq 2$ and $\frac{1}{p}+\frac{1}{q}=1$.
	
	Summarizing \eqref{Lp-Lq small freq KV 2D}, \eqref{Lp-Lq middle freq KV 2D} and \eqref{Lp-Lq large freq KV 2D}, we complete the proof.
\end{proof}

\section{Asymptotic expansions and asymptotic representation}\label{Section KV 2D spectral}
With the aim of determining whether the pointwise estimates in the Fourier space in Lemma \ref{Pointwise KV 2D} are sharp or not, we now investigate the asymptotic expansion of eigenvalue of \eqref{weshouldKelvinVoigt 2D} for $|\xi|\rightarrow0$ and $|\xi|\rightarrow \infty$. Moreover, to derive the asymptotic profiles of solutions, we need to get the asymptotic expressions of the propagator $e^{t\hat{\Phi}(|\xi|)}$ (see the definition in \eqref{Se3 eigenvalue equation 2D}). This method is strongly motivated from the paper \cite{IdeHaramotoKawashima}.
\subsection{Asymptotic expansions}\label{Section KV 2D asymptic expansion}
We denote by $\lambda_j=\lambda_j(|\xi|)$, $j=1,\dots,4$, the eigenvalues of matrix $\hat{\Phi}(|\xi|)$. Thus, these eigenvalues are the solutions to the following characteristic equation:
\begin{equation}\label{characteristic eq. 2D}
\begin{split}
F(\lambda)&=\text{det}\left(\lambda I-\hat{\Phi}(|\xi|)\right)\\
&=\left|
{\begin{array}{*{20}c}
	\lambda+\frac{b^2}{2}|\xi|^2-ib|\xi| & 0  & \frac{b^2}{2}|\xi|^2 & 0 \\
	0  & \lambda+\frac{a^2}{2}|\xi|^2-ia|\xi| &  0 & \frac{a^2}{2}|\xi|^2\\
	\frac{b^2}{2}|\xi|^2 & 0  & \lambda+\frac{b^2}{2}|\xi|^2+ib|\xi| & 0 \\
	0 & \frac{a^2}{2}|\xi|^2 & 0  & \lambda+\frac{a^2}{2}|\xi|^2+ia|\xi|\\
	\end{array}}
\right|\\
&=\lambda^4+\left(a^2+b^2\right)|\xi|^2\lambda^3+\left(\left(a^2+b^2\right)|\xi|^2+a^2b^2|\xi|^4\right)\lambda^2+2a^2b^2|\xi|^4\lambda+a^2b^2|\xi|^4,
\end{split}
\end{equation}
where $|\xi|$ is regarded as a parameter. We notice that
\begin{equation*}
\frac{d}{d\lambda}F(\lambda)=4\lambda^3+3\left(a^2+b^2\right)|\xi|^2\lambda^2+2\left(\left(a^2+b^2\right)|\xi|^2+a^2b^2|\xi|^4\right)\lambda+2a^2b^2|\xi|^4.
\end{equation*}
Comparing the polynomials $F(\lambda)$ and $\frac{d}{d\lambda}F(\lambda)$, we observe there are non-trivial common divisors at most for value of frequencies $|\xi|$ in a zero measure set. Thus, we have only simple roots of $F(\lambda)=0$ outside of this zero measure set.

Let $P_j(|\xi|)$ be the corresponding eigenprojections, which can be expressed as the way
\begin{equation}\label{eigenprojections}
P_j(|\xi|)=\prod\limits_{k\neq j}\frac{\hat{\Phi}(|\xi|)-\lambda_k(|\xi|)I}{\lambda_j(|\xi|)-\lambda_k(|\xi|)},
\end{equation}
where $I$ is a identity matrix of dimensions $4\times 4$.

In the next step we distinguish the asymptotic expansions of eigenvalues and their corresponding eigenprojections between two cases: $|\xi|\rightarrow0$ and $|\xi|\rightarrow\infty$. These expansions essentially determine the asymptotic behavior of solutions.

\subsubsection{Asymptotic expansions for $|\xi|\rightarrow 0$} 
\smallskip
We deduce that the eigenvalues $\lambda_j(|\xi|)$ and their corresponding eigenprojections  $P_j(|\xi|)$ have the following asymptotic expansions for $|\xi|\rightarrow 0$, respectively:
\begin{align}
\lambda_j(|\xi|&)=\lambda_j^{(0)}+\lambda_j^{(1)}|\xi|+\lambda_j^{(2)}|\xi|^2+\cdots,\label{small expan 2D}\\
P_j(|\xi|)&=P_j^{(0)}+P_j^{(1)}|\xi|+P_j^{(2)}|\xi|^2+\cdots,\label{small expan matrix 2D}
\end{align}
where $\lambda_j^{(k)}\in\mb{C}$, $P_j^{(k)}\in\mb{C}^{4\times 4}$ for all $k\in\mb{N}$.

Then, we substitute $\lambda=\lambda_j(|\xi|)$ chosen in \eqref{small expan 2D} into the characteristic equation \eqref{characteristic eq. 2D} and calculate the coefficients $\lambda_j^{(k)}$. After lengthy but straightforward calculations, the value of pairwise distinct coefficients are given by
\begin{equation*}
\left\{\begin{aligned}
&\lambda_1^{(0)}=\lambda_2^{(0)}=\lambda_3^{(0)}=\lambda_4^{(0)}=0,\\
&\lambda_1^{(1)}=ib,\,\,\,\,\lambda_2^{(1)}=-ib,\,\,\,\,\lambda_3^{(1)}=ia,\,\,\,\,\lambda_4^{(1)}=-ia,\\
&\lambda_1^{(2)}=\lambda_2^{(2)}=-\frac{b^2}{2},\,\,\,\,\lambda_3^{(2)}=\lambda_4^{(2)}=-\frac{a^2}{2}.\\
\end{aligned}\right.
\end{equation*}
Consequently, the eigenvalues have the asymptotic behaviors for $|\xi|\rightarrow 0$
\begin{equation}\label{asym KV 2D}
\begin{aligned}
\lambda_1(|\xi|)&=ib|\xi|-\frac{b^2}{2}|\xi|^2+\ml{O}\big(|\xi|^3\big),&&\lambda_3(|\xi|)=ia|\xi|-\frac{a^2}{2}|\xi|^2+\ml{O}\big(|\xi|^3\big),\\
\lambda_2(|\xi|)&=-ib|\xi|-\frac{b^2}{2}|\xi|^2+\ml{O}\big(|\xi|^3\big),&&\lambda_4(|\xi|)=-ia|\xi|-\frac{a^2}{2}|\xi|^2+\ml{O}\big(|\xi|^3\big).
\end{aligned}
\end{equation}
By the pairwise distinct eigenvalues given in \eqref{asym KV 2D} and the matrix $\hat{\Phi}(|\xi|)$ given in \eqref{Se3 eigenvalue equation 2D}, we employ \eqref{eigenprojections} to calculate $P_j^{(0)}$ that
\begin{equation}\label{2D small matrix}
\begin{aligned}
P_1^{(0)}&=\diag(1,0,0,0),\quad &P_3^{(0)}=\diag(0,1,0,0),\\
P_2^{(0)}&=\diag(0,0,1,0),\quad &P_4^{(0)}=\diag(0,0,0,1).
\end{aligned}
\end{equation}
Thus, we get $P_j(|\xi|)-P_j^{(0)}=\ml{O}(|\xi|)$ for $|\xi|\rightarrow0$.

\subsubsection{Asymptotic expansions for $|\xi|\rightarrow \infty$}
\smallskip
Similarly, the eigenvalues $\lambda_j(|\xi|)$ and their corresponding eigenprojections $P_j(|\xi|)$ have the following asymptotic expansions for $|\xi|\rightarrow\infty$, respectively:
\begin{align}
\lambda_j(|\xi|)&=\lambda_j^{(0)}|\xi|^2+\lambda_j^{(1)}|\xi|+\lambda_j^{(2)}+\lambda_j^{(3)}|\xi|^{-1}+\lambda_j^{(4)}|\xi|^{-2}+\cdots,\label{large expan 2D}\\
P_j(|\xi|)&=P_j^{(0)}|\xi|^2+P_j^{(1)}|\xi|+P_j^{(2)}+P_j^{(3)}|\xi|^{-1}+P_j^{(4)}|\xi|^{-2}+\cdots,\label{large matrix expan 2D}
\end{align}
where $\lambda_j^{(k)}\in\mb{C}$ and $P_j^{(k)}\in\mb{C}^{4\times 4}$ for all $k\in\mb{N}$. 

We plug $\lambda=\lambda_j(|\xi|)$ chosen in \eqref{large expan 2D} into the characteristic equation \eqref{characteristic eq. 2D} and to obtain the pairwise distinct value of coefficients $\lambda_j^{(k)}$ 
\begin{equation*}
\left\{
\begin{aligned}
&\lambda_1^{(0)}=\lambda_2^{(0)}=0,\,\,\,\,\lambda_3^{(0)}=-b^2,\,\,\,\,\lambda_4^{(0)}=-a^2,\\
&\lambda_1^{(1)}=\lambda_2^{(1)}=\lambda_3^{(1)}=\lambda_4^{(1)}=0,\\
&\lambda_1^{(2)}=\lambda_2^{(2)}=-1,\,\,\,\,\lambda_3^{(2)}=\lambda_4^{(2)}=1,\\
&\lambda_1^{(3)}=\lambda_2^{(3)}=\lambda_3^{(3)}=\lambda_4^{(3)}=0,\\
&\lambda_1^{(4)}=-\frac{1}{b^2},\,\,\,\,\lambda_2^{(4)}=-\frac{1}{a^2},\,\,\,\,\lambda_3^{(4)}=\frac{1}{b^2},\,\,\,\,\lambda_4^{(4)}=\frac{1}{a^2}.
\end{aligned}\right.
\end{equation*}
Consequently, we investigate the following asymptotic behaviors of pairwise distinct eigenvalues $\lambda_j(|\xi|)$, $j=1,\dots,4$ for $|\xi|\rightarrow \infty$:
\begin{equation}\label{asym KV large 2D}
\begin{aligned}
\lambda_1(|\xi|)&=-1-\frac{1}{b^2}|\xi|^{-2}+\ml{O}\big(|\xi|^{-3}\big),&&\lambda_3(|\xi|)=-b^2|\xi|^2+1+\frac{1}{b^2}|\xi|^{-2}+\ml{O}\big(|\xi|^{-3}\big),\\
\lambda_2(|\xi|)&=-1-\frac{1}{a^2}|\xi|^{-2}+\ml{O}\big(|\xi|^{-3}\big),&&\lambda_4(|\xi|)=-a^2|\xi|^2+1+\frac{1}{a^2}|\xi|^{-2}+\ml{O}\big(|\xi|^{-3}\big).
\end{aligned}
\end{equation}
Also, by straightforward computations, we find that
\begin{equation}\label{2D large matrix}
\begin{split}
&P_j^{(0)}=P_j^{(1)}=\mathbf{0}_{4\times4}\,\,\,\,\text{for}\,\,\,\,j=1,2,3,4,\\
&P_1^{(2)}=\frac{1}{2}\left(
{\begin{array}{*{20}c}
	1 & 0  & -1 & 0 \\
	0  &0 &  0 & 0\\
	-1 & 0  & 1 & 0 \\
	0 & 0 & 0  & 0\\
	\end{array}}\right),\,\,\,\,P_2^{(2)}=\frac{1}{2}\left(
{\begin{array}{*{20}c}
	0 & 0  & 0 & 0 \\
	0  &1 &  0 & -1\\
	0 & 0  & 0 & 0 \\
	0 & -1 & 0  & 1\\
	\end{array}}\right),\\
&P_3^{(2)}=\frac{1}{2}\left(
{\begin{array}{*{20}c}
	1 & 0  & 1 & 0 \\
	0  &0 &  0 & 0\\
	1 & 0  & 1 & 0 \\
	0 & 0 & 0  & 0\\
	\end{array}}
\right),\,\,\,\,P_4^{(2)}=\frac{1}{2}\left(
{\begin{array}{*{20}c}
	0 & 0  & 0 & 0 \\
	0 & 1 &  0 & 1\\
	0 & 0  & 0 & 0 \\
	0 & 1 & 0  & 1\\
	\end{array}}
\right).
\end{split}
\end{equation}
It leads to $P_j(|\xi|)-P_j^{(0)}|\xi|^2-P_j^{(1)}|\xi|-P_j^{(2)}=\ml{O}\big(|\xi|^{-1}\big)$ for $|\xi|\rightarrow \infty$.

\subsection{Asymptotic representation}\label{Section KV 2D asymptic expresssion}
In the last subsection we have calculated the asymptotic expansions of eigenvalues $\lambda_j(|\xi|)$ and their corresponding eigenprojections $P_j(|\xi|)$ for $|\xi|\rightarrow0$ and $|\xi|\rightarrow\infty$. In order to give the representation of solution $W(t,\xi)=e^{t\hat{\Phi}(|\xi|)}W_0(\xi)$, motivated by \cite{IdeHaramotoKawashima}, in this part we will give asymptotic expressions of the propagator $e^{t\hat{\Phi}(|\xi|)}$ for $|\xi|\rightarrow0$ and $|\xi|\rightarrow\infty$.

The propagator $e^{t\hat{\Phi}(|\xi|)}$ has the following spectral decomposition:
\begin{equation}\label{2D small semigroup}
e^{t\hat{\Phi}(|\xi|)}=\sum\limits_{j=1}^4e^{\lambda_j(|\xi|)t}P_j(|\xi|),
\end{equation}
where $\lambda_j(|\xi|)$ are the eigenvalues of $\hat{\Phi}(|\xi|)$ and $P_j(|\xi|)$ are their corresponding eigenprojections.\\
Now, we distinguish between two cases: $|\xi|\rightarrow0$ and $|\xi|\rightarrow\infty$, respectively, to discuss the asymptotic expressions of \eqref{2D small semigroup}. 

\subsubsection{Asymptotic expansion of $e^{t\hat{\Phi}(|\xi|)}$ for $|\xi|\rightarrow 0$}
\smallskip
We first define the matrix $\hat{S}_0(t,\xi)$ by
\begin{equation}\label{S0 small 2D}
\hat{S}_0(t,\xi):=\sum\limits_{j=1}^4e^{\lambda_j^0(|\xi|)t}P_j^{(0)},
\end{equation}
where
\begin{equation}\label{lambda0 2D}
\begin{aligned}
\lambda_1^0(|\xi|)&=ib|\xi|-\frac{b^2}{2}|\xi|^2,&&\lambda_3^0(|\xi|)=ia|\xi|-\frac{a^2}{2}|\xi|^2,\\
\lambda_2^0(|\xi|)&=-ib|\xi|-\frac{b^2}{2}|\xi|^2,&&\lambda_4^0(|\xi|)=-ia|\xi|-\frac{a^2}{2}|\xi|^2,
\end{aligned}
\end{equation}
and $P_j^{(0)}$ are given in \eqref{2D small matrix}. We then rewrite the propagator for $|\xi|\rightarrow 0$ by
\begin{equation}\label{S0 and R0}
e^{t\hat{\Phi}(|\xi|)}=\hat{S}_0(t,\xi)+\hat{R}_0(t,\xi).
\end{equation}
Let us derive the pointwise estimate for the remainder $\hat{R}_0(t,\xi)$ for $|\xi|\rightarrow 0$.
\begin{lem}\label{small freq matrix KV 2D}
	We have the following estimate of remainder:
	\begin{equation}\label{lem 2D small}
	\left|\hat{R}_0(t,\xi)\right|\lesssim |\xi|e^{-c|\xi|^2t},
	\end{equation}
	where $0<|\xi|\leq \varepsilon$ and $c$ is a positive constant.
\end{lem}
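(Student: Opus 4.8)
The plan is to start from the spectral decomposition \eqref{2D small semigroup}. Since the four eigenvalues in \eqref{asym KV 2D} are pairwise distinct once $\varepsilon$ is chosen small enough, the representation \eqref{2D small semigroup} and the formula \eqref{eigenprojections} for the eigenprojections are legitimate on the whole punctured zone $0<|\xi|\le\varepsilon$, and all the $\ml{O}(\cdot)$ remainders in \eqref{asym KV 2D} and \eqref{small expan matrix 2D} are uniform there. Subtracting \eqref{S0 small 2D} from \eqref{2D small semigroup} and splitting the $j$-th summand, I would write
\[
\hat R_0(t,\xi)=\sum_{j=1}^4 e^{\lambda_j(|\xi|)t}\big(P_j(|\xi|)-P_j^{(0)}\big)+\sum_{j=1}^4\big(e^{\lambda_j(|\xi|)t}-e^{\lambda_j^0(|\xi|)t}\big)P_j^{(0)}.
\]
The first sum is essentially immediate: by \eqref{small expan matrix 2D}--\eqref{2D small matrix} one has $\big|P_j(|\xi|)-P_j^{(0)}\big|\lesssim|\xi|$ for $0<|\xi|\le\varepsilon$, while $\operatorname{Re}\lambda_j(|\xi|)\le-c\rho(|\xi|)\le-c|\xi|^2$ on this zone (from Lemma \ref{Pointwise KV 2D}, or directly from \eqref{asym KV 2D}), so $\big|e^{\lambda_j(|\xi|)t}\big|\le e^{-c|\xi|^2t}$ and the first sum is bounded by $C|\xi|e^{-c|\xi|^2t}$.

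For the second sum I would exploit that the expansion \eqref{asym KV 2D} of $\lambda_j$ and its truncation \eqref{lambda0 2D} agree up to and including the $|\xi|^2$ term, so that $r_j(|\xi|):=\lambda_j(|\xi|)-\lambda_j^0(|\xi|)=\ml{O}\big(|\xi|^3\big)$ as $|\xi|\to0$. Factoring
\[
e^{\lambda_j(|\xi|)t}-e^{\lambda_j^0(|\xi|)t}=e^{\lambda_j^0(|\xi|)t}\big(e^{r_j(|\xi|)t}-1\big),
\]
using $\big|e^{\lambda_j^0(|\xi|)t}\big|\le e^{-c_0|\xi|^2t}$ with $c_0=\tfrac12\min\{a^2,b^2\}$ and the elementary bound $|e^z-1|\le|z|e^{|z|}$, I obtain
\[
\big|e^{\lambda_j(|\xi|)t}-e^{\lambda_j^0(|\xi|)t}\big|\lesssim|\xi|^3t\,e^{-c_0|\xi|^2t}e^{C|\xi|^3t}\le|\xi|^3t\,e^{-(c_0-C\varepsilon)|\xi|^2t}
\]
for $0<|\xi|\le\varepsilon$, since $|\xi|^3\le\varepsilon|\xi|^2$ there. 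Shrinking $\varepsilon$ so that $c:=c_0-C\varepsilon>0$ and absorbing one power with the elementary inequality $\sigma e^{-c\sigma/2}\le C$ for $\sigma\ge0$ (applied to $\sigma=|\xi|^2t$), the right-hand side becomes $\lesssim|\xi|\,(|\xi|^2t)\,e^{-c|\xi|^2t}\lesssim|\xi|e^{-\frac c2|\xi|^2t}$. Summing over $j=1,\dots,4$ and combining with the estimate for the first sum yields \eqref{lem 2D small}.

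The only delicate point is the uniformity in $t$ in the second sum: $r_j(|\xi|)$ is complex, with real part only of size $|\xi|^3$ and of no definite sign, so a naive bound on $\big|e^{r_j(|\xi|)t}-1\big|$ grows like $e^{C|\xi|^3t}$; the argument closes precisely because, after $\varepsilon$ is made small, this spurious growth rate $|\xi|^3$ is dominated by the genuine parabolic decay rate $|\xi|^2$ carried by $e^{\lambda_j^0(|\xi|)t}$. Everything else is bookkeeping with the already-computed asymptotic expansions.
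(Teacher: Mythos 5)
Your proposal is correct and follows essentially the same route as the paper: the identical two-term splitting of $\hat R_0$ into $\sum_j e^{\lambda_j t}\big(P_j-P_j^{(0)}\big)$ and $\sum_j e^{\lambda_j^0 t}\big(e^{(\lambda_j-\lambda_j^0)t}-1\big)P_j^{(0)}$, the bound $P_j(|\xi|)-P_j^{(0)}=\ml{O}(|\xi|)$ for the first sum, and the elementary identity for $e^{z}-1$ combined with $\lambda_j-\lambda_j^0=\ml{O}\big(|\xi|^3\big)$ and absorption of the factor $t|\xi|^2$ into the exponential for the second. Your extra care about the sign of the real part of the $\ml{O}\big(|\xi|^3\big)$ remainder and the need to shrink $\varepsilon$ is a welcome refinement of a point the paper passes over silently, but it is not a different argument.
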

\begin{proof}
	From \eqref{2D small semigroup}, \eqref{S0 small 2D} and \eqref{S0 and R0}, we can rewrite the remainder by
	\begin{equation*}
	\begin{split}
	\hat{R}_0(t,\xi)&=\sum\limits_{j=1}^4e^{\lambda_j(|\xi|)t}P_j(|\xi|)-\sum\limits_{j=1}^4e^{\lambda_j^0(|\xi|)t}P_j^{(0)}\\
	&=\sum\limits_{j=1}^4e^{\lambda_j(|\xi|)t}\left(P_j(|\xi|)-P_j^{(0)}\right)+\sum\limits_{j=1}^4e^{\lambda_j^0(|\xi|)t}\left(e^{\lambda_j(|\xi|)t-\lambda_j^0(|\xi|)t}-1\right)P_j^{(0)}\\
	&=:\hat{R}_{0,1}(t,\xi)+\hat{R}_{0,2}(t,\xi).
	\end{split}
	\end{equation*}
	Due to the facts that $P_j(|\xi|)-P_j^{(0)}=\ml{O}(|\xi|)$ and $e^{\lambda_j(|\xi|)t}\lesssim e^{-c|\xi|^2t}$ for $|\xi|\rightarrow 0$, we immediately obtain the estimate
	\begin{equation*}
	\left|\hat{R}_{0,1}(t,\xi)\right|\lesssim|\xi|e^{-c|\xi|^2t}\,\,\,\,\text{for}\,\,\,\,|\xi|\rightarrow0.
	\end{equation*}
	By the similar way, because $\lambda_j(|\xi|)-\lambda_j^0(|\xi|)=\ml{O}\big(|\xi|^{3}\big)$ and
	\begin{equation*}
	e^{c|\xi|^3t}-1=ct|\xi|^3\int_0^1e^{c|\xi|^3t\tau}d\tau,
	\end{equation*}
	we have
	\begin{equation*}
	\left|\hat{R}_{0,2}(t,\xi)\right|\lesssim \left|e^{\lambda_j(|\xi|)t-\lambda_j^0(|\xi|)t}-1\right|e^{-c|\xi|^2t}\lesssim t|\xi|^3e^{-c|\xi|^2t}\lesssim |\xi|e^{-c|\xi|^2t}\,\,\,\,\text{for}\,\,\,\,|\xi|\rightarrow0.
	\end{equation*}
	Summarizing above estimates, we complete the proof.
\end{proof}
\subsubsection{Asymptotic expansion of $e^{t\hat{\Phi}(|\xi|)}$ for $|\xi|\rightarrow \infty$}
\smallskip
From \eqref{large expan 2D} and \eqref{asym KV large 2D}, we now define
\begin{equation}\label{S0 large 2D}
\hat{S}_{\infty}(t,\xi):=\sum\limits_{j=1}^4e^{\lambda_j^{\infty}(|\xi|)t}\left(P_j^{(0)}|\xi|^2+P_j^{(1)}|\xi|+P_j^{(2)}\right),
\end{equation}
where
\begin{equation}\label{lambda large 2D}
\begin{aligned}
\lambda_1^{\infty}(|\xi|)&=-1-\frac{1}{b^2}|\xi|^{-2},&&\lambda_3^{\infty}(|\xi|)=-b^2|\xi|^2+1+\frac{1}{b^2}|\xi|^{-2},\\
\lambda_2^{\infty}(|\xi|)&=-1-\frac{1}{a^2}|\xi|^{-2},&&\lambda_4^{\infty}(|\xi|)=-a^2|\xi|^2+1+\frac{1}{a^2}|\xi|^{-2},
\end{aligned}
\end{equation}
and $P_j^{(0)},\,P_j^{(1)},\,P_j^{(2)}$ are given in \eqref{2D large matrix}. We write the propagator for $|\xi|\rightarrow\infty$
\begin{equation}\label{S0 and R0 large}
e^{t\hat{\Phi}(|\xi|)}=\hat{S}_{\infty}(t,\xi)+\hat{R}_{\infty}(t,\xi).
\end{equation}
Then, we can derive the following pointwise estimate for the remainder $\hat{R}_{\infty}(t,\xi)$ for $|\xi|\rightarrow\infty$.
\begin{lem}
	We have the following estimate of remainder:
	\begin{equation}\label{lem 2D large}
	\left|\hat{R}_{\infty}(t,\xi)\right|\lesssim e^{-ct},
	\end{equation}
	where $|\xi|\geq \frac{1}{\varepsilon}$ and $c$ is a positive constant.
\end{lem}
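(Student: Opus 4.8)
The plan is to mirror the proof of Lemma~\ref{small freq matrix KV 2D}, working now in the exterior zone $|\xi|\geq 1/\varepsilon$ (fixing $\varepsilon>0$ small enough that the expansions \eqref{large expan 2D}, \eqref{asym KV large 2D} and \eqref{2D large matrix} are valid there). Using the spectral decomposition \eqref{2D small semigroup} together with the definition \eqref{S0 large 2D} of $\hat{S}_{\infty}(t,\xi)$, I would split the remainder in \eqref{S0 and R0 large} as $\hat{R}_{\infty}=\hat{R}_{\infty,1}+\hat{R}_{\infty,2}$, where
\begin{align*}
\hat{R}_{\infty,1}(t,\xi)&=\sum_{j=1}^4 e^{\lambda_j(|\xi|)t}\Big(P_j(|\xi|)-P_j^{(0)}|\xi|^2-P_j^{(1)}|\xi|-P_j^{(2)}\Big),\\
\hat{R}_{\infty,2}(t,\xi)&=\sum_{j=1}^4 e^{\lambda_j^{\infty}(|\xi|)t}\Big(e^{(\lambda_j(|\xi|)-\lambda_j^{\infty}(|\xi|))t}-1\Big)\Big(P_j^{(0)}|\xi|^2+P_j^{(1)}|\xi|+P_j^{(2)}\Big).
\end{align*}

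For $\hat{R}_{\infty,1}$ I would use that $P_j^{(0)}=P_j^{(1)}=\mathbf{0}_{4\times4}$ by \eqref{2D large matrix}, so the bracket is exactly $P_j(|\xi|)-P_j^{(2)}=\ml{O}\big(|\xi|^{-1}\big)$, while \eqref{asym KV large 2D} gives $\mathrm{Re}\,\lambda_j(|\xi|)\le -c$ on $|\xi|\geq 1/\varepsilon$, hence $\big|e^{\lambda_j(|\xi|)t}\big|\lesssim e^{-ct}$ (consistently with Lemma~\ref{Pointwise KV 2D} since $\rho(|\xi|)\gtrsim 1$ there); this yields $\big|\hat{R}_{\infty,1}(t,\xi)\big|\lesssim |\xi|^{-1}e^{-ct}\lesssim e^{-ct}$. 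For $\hat{R}_{\infty,2}$, again by \eqref{2D large matrix} the matrix factor reduces to $P_j^{(2)}=\ml{O}(1)$, and comparing \eqref{asym KV large 2D} with \eqref{lambda large 2D} gives $\lambda_j(|\xi|)-\lambda_j^{\infty}(|\xi|)=\ml{O}\big(|\xi|^{-3}\big)$; applying $|e^{z}-1|\le|z|e^{|z|}$ with $z=(\lambda_j(|\xi|)-\lambda_j^{\infty}(|\xi|))t$ yields $\big|e^{(\lambda_j-\lambda_j^{\infty})t}-1\big|\lesssim t|\xi|^{-3}e^{Ct|\xi|^{-3}}$. Since $\lambda_j^{\infty}(|\xi|)\le -c$ uniformly for $|\xi|\geq 1/\varepsilon$ by \eqref{lambda large 2D} (with decay of order $|\xi|^2$ when $j=3,4$), the product $e^{\lambda_j^{\infty}(|\xi|)t}e^{Ct|\xi|^{-3}}$ is $\le e^{-ct/2}$ as soon as $\varepsilon$ is so small that $C\varepsilon^{3}\le c/2$; absorbing $t|\xi|^{-3}\lesssim t$ into the exponential then gives $\big|\hat{R}_{\infty,2}(t,\xi)\big|\lesssim e^{-ct}$. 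Adding the two estimates completes the proof.

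I expect the one genuinely delicate step to be the balancing in $\hat{R}_{\infty,2}$: the correction factor $e^{Ct|\xi|^{-3}}$ increases in $t$, so one has to make sure the decay rate $-\lambda_j^{\infty}(|\xi|)\gtrsim 1$ dominates it uniformly on the whole exterior zone — which is precisely what forces (harmlessly) a smallness condition on $\varepsilon$. The remainder of the argument is a direct transcription of the small-frequency computation, made slightly simpler here by the vanishing of the top coefficient matrices $P_j^{(0)},P_j^{(1)}$ in \eqref{large matrix expan 2D}.
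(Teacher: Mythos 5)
Your proof is correct and follows essentially the same route as the paper: the same splitting of $\hat{R}_{\infty}$ into $\hat{R}_{\infty,1}+\hat{R}_{\infty,2}$ via the spectral decomposition, the same use of $P_j(|\xi|)-P_j^{(2)}=\ml{O}\big(|\xi|^{-1}\big)$ and $\lambda_j(|\xi|)-\lambda_j^{\infty}(|\xi|)=\ml{O}\big(|\xi|^{-3}\big)$. Your explicit absorption of the factor $e^{Ct|\xi|^{-3}}$ into $e^{\lambda_j^{\infty}(|\xi|)t}$ (via a smallness condition on $\varepsilon$) is a welcome extra precision on a step the paper passes over silently, but it does not change the argument.
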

\begin{proof}
	From \eqref{S0 large 2D} and \eqref{S0 and R0 large}, we obtain
	\begin{equation*}
	\begin{split}
	\hat{R}_{\infty}(t,\xi)&=\sum\limits_{j=1}^4e^{\lambda_j(|\xi|)t}P_j(|\xi|)-\sum\limits_{j=1}^4e^{\lambda_j^{\infty}(|\xi|)t}\left(P_j^{(0)}|\xi|^2+P_j^{(1)}|\xi|+P_j^{(2)}\right)\\
	&=\sum\limits_{j=1}^4e^{\lambda_j(|\xi|)t}\left(P_j(|\xi|)-P_j^{(2)}\right)+\sum\limits_{j=1}^4e^{\lambda_j^{\infty}(|\xi|)t}\left(e^{\lambda_j(|\xi|)t-\lambda_j^{\infty}(|\xi|)t}-1\right)P_j^{(2)}\\
	&=:\hat{R}_{\infty,1}(t,\xi)+\hat{R}_{\infty,2}(t,\xi).
	\end{split}
	\end{equation*}
	Using the facts that $P_j(|\xi|)-P_j^{(2)}=\ml{O}\big(|\xi|^{-1}\big)$ and $e^{\lambda_j(|\xi|)t}\lesssim e^{-ct}$ for $|\xi|\rightarrow \infty$, we immediately obtain the estimate
	\begin{equation*}
	\left|\hat{R}_{\infty,1}(t,\xi)\right|\lesssim|\xi|^{-1} e^{-ct}\lesssim e^{-ct}\,\,\,\,\text{for}\,\,\,\,|\xi|\rightarrow\infty.
	\end{equation*}
	Since $\lambda_j(|\xi|)-\lambda_j^{\infty}(|\xi|)=\ml{O}\big(|\xi|^{-3}\big)$ we have
	\begin{equation*}
	\left|\hat{R}_{\infty,2}(t,\xi)\right|\lesssim \left|e^{\lambda_j(|\xi|)t-\lambda_j^{\infty}(|\xi|)t}-1\right|e^{-ct}\lesssim e^{-ct}\,\,\,\,\text{for}\,\,\,\,|\xi|\rightarrow\infty.
	\end{equation*}
	Thus, the proof is completed.
\end{proof}
\subsubsection{Conclusion for asymptotic representation}
\smallskip
Finally, combining of the estimate of $\hat{R}_0(t,\xi)$ and $\hat{R}_{\infty}(t,\xi)$, we can immediately prove the following statement for the asymptotic expansions of the propagator $e^{t\hat{\Phi}(|\xi|)}$. The proof of the next theorem strictly follows the proof of Lemma 4.3 from the paper \cite{IdeHaramotoKawashima}. 
\begin{thm}
	We have the following asymptotic expansions:
	\begin{equation}\label{representation of solution 2D}
	e^{t\hat{\Phi}(|\xi|)}=\hat{S}_0(t,\xi)+\hat{S}_{\infty}(t,\xi)+\hat{R}(t,\xi),
	\end{equation}
	where the remainder $\hat{R}(t,\xi)$ satisfies the estimates
	\begin{equation*}
	\left|\hat{R}(t,\xi)\right|\lesssim\left\{
	\begin{aligned}
	&|\xi|e^{-c|\xi|^2t}&&\text{for}\,\,\,\,|\xi|\leq \varepsilon,\\
	&e^{-ct}&&\text{for}\,\,\,\,|\xi|\geq \varepsilon.\\
	\end{aligned}
	\right.
	\end{equation*}
\end{thm}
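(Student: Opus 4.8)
The plan is to obtain \eqref{representation of solution 2D} by patching together the low-, intermediate- and high-frequency descriptions of the propagator already prepared above, following closely the proof of Lemma 4.3 in \cite{IdeHaramotoKawashima}. Throughout, $\hat S_0(t,\xi)$ and $\hat S_\infty(t,\xi)$ are to be read as localized to $Z_{\intt}(\varepsilon)$ and $Z_{\extt}(\varepsilon)$ by the cut-off functions $\chi_{\intt}$ and $\chi_{\extt}$, respectively; this localization is what makes the decomposition meaningful, since the factor $e^{\lambda_3^{\infty}(|\xi|)t}$ in \eqref{S0 large 2D}--\eqref{lambda large 2D} already grows as $|\xi|\to0$, the expansion \eqref{S0 large 2D} being accurate only near infinity (and symmetrically $\hat S_0$ only near the origin). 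With this convention one sets
\[
\hat R(t,\xi):=e^{t\hat\Phi(|\xi|)}-\hat S_0(t,\xi)-\hat S_\infty(t,\xi)
\]
and estimates $\hat R$ separately on $Z_{\intt}(\varepsilon)$, $Z_{\midd}(\varepsilon)$ and $Z_{\extt}(\varepsilon)$.

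On $Z_{\intt}(\varepsilon)$ one has $\hat S_\infty\equiv0$, so that $\hat R=e^{t\hat\Phi}-\hat S_0=\hat R_0$ by \eqref{S0 and R0}, and Lemma \ref{small freq matrix KV 2D} gives $|\hat R(t,\xi)|\lesssim|\xi|e^{-c|\xi|^2t}$, which is the first line of the claim. On $Z_{\extt}(\varepsilon)$ one has $\hat S_0\equiv0$, so that $\hat R=e^{t\hat\Phi}-\hat S_\infty=\hat R_\infty$ by \eqref{S0 and R0 large}, and the large-frequency remainder estimate \eqref{lem 2D large} gives $|\hat R(t,\xi)|\lesssim e^{-ct}$. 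Finally, on $Z_{\midd}(\varepsilon)$ both $\hat S_0$ and $\hat S_\infty$ are exponentially decaying in $t$, by the elementary bounds on $|e^{\lambda_j^0(|\xi|)t}|$ and $|e^{\lambda_j^{\infty}(|\xi|)t}|$ read off from \eqref{lambda0 2D}, \eqref{lambda large 2D} together with the boundedness of the prefactors in \eqref{S0 small 2D}, \eqref{S0 large 2D} (these prefactors are just the fixed matrices $P_j^{(0)}$, resp. $P_j^{(2)}$, by \eqref{2D small matrix} and \eqref{2D large matrix}), while $|e^{t\hat\Phi(|\xi|)}|\lesssim e^{-c\rho(|\xi|)t}\lesssim e^{-ct}$ by Lemma \ref{Pointwise KV 2D} since $\rho(|\xi|)\geq\rho(\varepsilon)>0$ on $Z_{\midd}(\varepsilon)$; hence $|\hat R(t,\xi)|\lesssim e^{-ct}$ there as well. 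Collecting the middle and exterior zones into the single bound valid for $|\xi|\geq\varepsilon$ completes the argument.

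I expect the only genuine difficulty to be the bookkeeping at the transition between the three zones, where $\chi_{\intt},\chi_{\midd},\chi_{\extt}$ take intermediate values: one must check that on the support of each cut-off the relevant elementary exponential estimates hold uniformly, so that a single constant $c>0$ can be chosen for all zones, and in particular that $\hat S_\infty$ — built from an expansion valid only as $|\xi|\to\infty$ — contributes nothing dangerous wherever $\chi_{\extt}$ is nonzero inside $Z_{\midd}(\varepsilon)$ (and symmetrically for $\hat S_0$ near $|\xi|\sim\varepsilon$). Beyond this no new analytic input is required: the proof rests solely on Lemmas \ref{Pointwise KV 2D} and \ref{small freq matrix KV 2D}, the large-frequency remainder estimate \eqref{lem 2D large}, and the eigenvalue and eigenprojection expansions \eqref{asym KV 2D}, \eqref{2D small matrix}, \eqref{asym KV large 2D}, \eqref{2D large matrix}.
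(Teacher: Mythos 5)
Your proposal is correct and follows essentially the same route as the paper, which simply combines the remainder estimates \eqref{lem 2D small} and \eqref{lem 2D large} zone by zone in the manner of Lemma 4.3 of \cite{IdeHaramotoKawashima}. You in fact supply more detail than the paper does — in particular the observation that $\hat{S}_{\infty}$ must be localized away from the origin (since $e^{\lambda_3^{\infty}(|\xi|)t}$ grows as $|\xi|\rightarrow0$) and the explicit treatment of the middle zone via Lemma \ref{Pointwise KV 2D} — and these details are exactly the ones the paper's one-line argument implicitly relies on.
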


\section{Asymptotic profiles}\label{Section KV 2D refinement of decay estimates}
The purpose of this section is to investigate the asymptotic profiles of solutions to \eqref{weshouldKelvinVoigt 2D}. According to Theorems \ref{KV estimate for W} and \ref{LpLqawayconjugateline KV 2D} in the previous section, we observe that the decay rate of these estimates is dominated by the behavior of the eigenvalues for $|\xi|\rightarrow0$. For frequencies in the bounded and large zones, they imply exponential decay providing that we assume a suitable regularity for initial data. Thus, we only explain the asymptotic profiles of solutions for the case $|\xi|\rightarrow0$.  

To begin with, let us introduce the following reference system:
\begin{equation}\label{reference system KV 2D}
\left\{\begin{aligned}
&\tilde{u}_{t}-\frac{1}{2}\widetilde{M}^2\Delta\tilde{u}+i\widetilde{M}(-\Delta)^{1/2}\tilde{u}=0,&&t>0,\,\,x\in\mb{R}^2,\\
&\tilde{u}(0,x)=\tilde{u}_0(x):=\ml{F}^{-1}(W_0)(x),&&x\in\mb{R}^2,
\end{aligned}\right.
\end{equation}
where $\tilde{u}=\left(\tilde{u}^{(1)},\tilde{u}^{(2)},\tilde{u}^{(3)},\tilde{u}^{(4)}\right)^{\mathrm{T}}$ and $\widetilde{M}=\diag(-b,-a,b,a)$.\\
This reference system is consisted of two different evolution equations as follows: 
\begin{equation*}
\begin{aligned}
&\text{heat equation:}&& \tilde{u}^+_t-\Delta\tilde{u}^+=0,\\
&\text{half-wave equation:}&& \tilde{u}^-_t\pm i(-\Delta)^{1/2}\tilde{u}^-=0,\\
\end{aligned}
\end{equation*}
with the suitable initial data.
\begin{rem}
	We point out that the influence of friction and Kelvin-Voigt damping to the asymptotic profiles of solution. Consider the linear dissipative elastic waves
	\begin{align}
	&u_{tt}-a^2\Delta u-\left(b^2-a^2\right)\nabla\divv u+u_t=0,\label{remark KV 2D friction}\\
	&u_{tt}-a^2\Delta u-\left(b^2-a^2\right)\nabla\divv u+\left(-a^2\Delta-\left(b^2-a^2\right)\nabla\divv\right)u_t=0,\label{remark KV 2D viscoelastic}
	\end{align}
	with $b^2>a^2>0$.	From the recent papers \cite{Reissig2016,ChenReissig2018}, we observe that the reference systems to \eqref{remark KV 2D friction} can be described by heat equations and heat equations with positive mass. However, when friction replaced by Kelvin-Voigt damping, i.e. dissipative system \eqref{remark KV 2D viscoelastic}, the reference system is consisted of heat equations and half-wave equations.
\end{rem}

Taking partial Fourier transform such that $\widetilde{W}(t,\xi)=\ml{F}_{x\rightarrow\xi}(\tilde{u}(t,x))$ to \eqref{reference system KV 2D}, we obtain
\begin{equation}\label{reference system KV 2D Fourier}
\left\{\begin{aligned}
&\widetilde{W}_t+\frac{1}{2}|\xi|^2\widetilde{M}^2\widetilde{W}+i|\xi|\widetilde{M}\widetilde{W}=0,&&t>0,\,\,\xi\in\mb{R}^2,\\
&\widetilde{W}(0,\xi)=W_0(\xi),&&\xi\in\mb{R}^2.
\end{aligned}\right.
\end{equation}
According to \eqref{S0 small 2D}, \eqref{lambda0 2D} and \eqref{2D small matrix}, we can explicitly express $\hat{S}_0(t,\xi)$ by
\begin{equation}\label{S_0 explicity KV 2D}
\begin{split}
\hat{S}_0(t,\xi)&=\diag\left(e^{\lambda_1^{(0)}(|\xi|)t},e^{\lambda_3^{(0)}(|\xi|)t},e^{\lambda_2^{(0)}(|\xi|)t},e^{\lambda_4^{(0)}(|\xi|)t}\right)\\
&=\diag\left(e^{ib|\xi|t-\frac{b^2}{2}|\xi|^2t},e^{ia|\xi|t-\frac{a^2}{2}|\xi|^2t},e^{-ib|\xi|t-\frac{b^2}{2}|\xi|^2t},e^{-ia|\xi|t-\frac{a^2}{2}|\xi|^2t}\right).
\end{split}
\end{equation}
Then, we know by direct computation that $\hat{S}_0(t,\xi)$ is the solution of the first-order system \eqref{reference system KV 2D Fourier}.

For one thing, considering initial data satisfying $\left(|D|u_0^{(k)},u_1^{(k)}\right)\in L^m\big(\mb{R}^2\big)\times L^m\big(\mb{R}^2\big)$ with $m\in[1,2]$ for $k=1,2$, we state our first result.
\begin{thm}\label{thme refinement 01}
	Let us consider the Cauchy problem \eqref{linearproblem Kelvin-Voigt 2D} with initial data satisfying $\left(|D|u^{(k)}_{0},u^{(k)}_{1}\right)\in L^m\big(\mb{R}^2\big)\times L^m\big(\mb{R}^2\big)$ for $k=1,2$.  Then, we obtain for the solution $W=W(t,\xi)$ to the Cauchy problem \eqref{weshouldKelvinVoigt 2D} the refinement estimate
	\begin{equation*}
	\left\|\chi_{\intt}(D)\ml{F}^{-1}_{\xi\rightarrow x}(W-\hat{S}_0W_0)(t,\cdot)\right\|_{\dot{H}^s(\mb{R}^2)}\lesssim(1+t)^{-\frac{2-m+ms}{2m}-\frac{1}{2}}\sum\limits_{k=1}^2\left\|\left(|D|u^{(k)}_{0},u^{(k)}_1\right)\right\|_{L^m(\mb{R}^2)\times L^m(\mb{R}^2)},
	\end{equation*}
	where $s\geq0$ and  $m\in[1,2]$.
\end{thm}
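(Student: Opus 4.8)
The plan is to combine the decomposition $e^{t\hat{\Phi}(|\xi|)}=\hat{S}_0(t,\xi)+\hat{R}_0(t,\xi)$ from \eqref{S0 and R0} with the crucial gain of one extra power of $|\xi|$ contained in the remainder estimate \eqref{lem 2D small} of Lemma~\ref{small freq matrix KV 2D}. Since $\chi_{\intt}(\xi)$ is supported in $\{|\xi|<\varepsilon\}$, where that decomposition holds, we have $\chi_{\intt}(\xi)\big(W(t,\xi)-\hat{S}_0(t,\xi)W_0(\xi)\big)=\chi_{\intt}(\xi)\hat{R}_0(t,\xi)W_0(\xi)$, and hence, by the Parseval--Plancherel theorem,
\begin{equation*}
\left\|\chi_{\intt}(D)\ml{F}^{-1}_{\xi\rightarrow x}(W-\hat{S}_0W_0)(t,\cdot)\right\|_{\dot{H}^s(\mb{R}^2)}=\left\||\xi|^s\chi_{\intt}(\xi)\hat{R}_0(t,\xi)W_0(\xi)\right\|_{L^2(\mb{R}^2)}.
\end{equation*}

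Next I would insert the pointwise bound $|\hat{R}_0(t,\xi)|\lesssim|\xi|e^{-c|\xi|^2t}$ from \eqref{lem 2D small}, valid for $0<|\xi|\leq\varepsilon$, which reduces the task to estimating $\big\||\xi|^{s+1}\chi_{\intt}(\xi)e^{-c|\xi|^2t}W_0(\xi)\big\|_{L^2(\mb{R}^2)}$. From here the argument mimics the small-frequency part of the proof of Theorem~\ref{KV estimate for W}: H\"older's inequality with $\frac12=\frac{2-m}{2m}+\frac1{m'}$, $\frac1m+\frac1{m'}=1$, followed by the Hausdorff--Young inequality $\|W_0\|_{L^{m'}(\mb{R}^2)}\lesssim\|\ml{F}^{-1}(W_0)\|_{L^m(\mb{R}^2)}$, yields
\begin{equation*}
\left\||\xi|^{s+1}\chi_{\intt}(\xi)e^{-c|\xi|^2t}W_0(\xi)\right\|_{L^2(\mb{R}^2)}\lesssim\left\||\xi|^{s+1}\chi_{\intt}(\xi)e^{-c|\xi|^2t}\right\|_{L^{\frac{2m}{2-m}}(\mb{R}^2)}\left\|\ml{F}^{-1}(W_0)\right\|_{L^m(\mb{R}^2)}.
\end{equation*}
Recalling the definition of $W_0$ in terms of the transformed Cauchy data (as used throughout Section~\ref{Section KV 2D estimate of solution}) gives $\|\ml{F}^{-1}(W_0)\|_{L^m(\mb{R}^2)}\lesssim\sum_{k=1}^2\|(|D|u_0^{(k)},u_1^{(k)})\|_{L^m(\mb{R}^2)\times L^m(\mb{R}^2)}$.

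It then remains to evaluate $\big\||\xi|^{s+1}\chi_{\intt}(\xi)e^{-c|\xi|^2t}\big\|_{L^{2m/(2-m)}(\mb{R}^2)}$. Writing $\sigma:=\frac{2m}{2-m}$ (the limiting case $m=2$, i.e. $\sigma=\infty$, is immediate since $\chi_{\intt}$ has compact support), passing to polar coordinates in $\mb{R}^2$ and, for $t\geq1$, substituting $r=\tau/\sqrt{t}$, one obtains $\big(\int_0^\varepsilon r^{(s+1)\sigma+1}e^{-c\sigma r^2 t}\,dr\big)^{1/\sigma}\lesssim t^{-\frac{s+1}{2}-\frac1\sigma}=t^{-\frac{s+1}{2}-\frac{2-m}{2m}}$, whereas for $t\in[0,1]$ the norm is bounded because $e^{-c|\xi|^2t}\leq1$. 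Since $\frac{s+1}{2}+\frac{2-m}{2m}=\frac{2-m+ms}{2m}+\frac12$, combining the estimates for $t\in[0,1]$ and for $t\geq1$ produces the decay factor $(1+t)^{-\frac{2-m+ms}{2m}-\frac12}$ and completes the proof.

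I do not anticipate a genuinely hard analytical step: the essential improvement over Lemma~\ref{Pointwise KV 2D}, namely the extra factor $|\xi|$ in the remainder, has already been extracted in Lemma~\ref{small freq matrix KV 2D}. The only point requiring care is the bookkeeping — verifying that this $|\xi|$-gain turns, through the scaling in the $L^{2m/(2-m)}$ integral, into exactly one more half-power of $(1+t)^{-1}$, and checking that the resulting exponent coincides with the form written in the theorem.
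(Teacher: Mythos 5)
Your proposal is correct and follows essentially the same route as the paper: identify $\chi_{\intt}(\xi)(W-\hat{S}_0W_0)$ with $\chi_{\intt}(\xi)\hat{R}_0(t,\xi)W_0(\xi)$ via \eqref{S0 and R0}, insert the bound $|\hat{R}_0(t,\xi)|\lesssim|\xi|e^{-c|\xi|^2t}$ from Lemma \ref{small freq matrix KV 2D}, apply Parseval--Plancherel, and then repeat the small-frequency H\"older/Hausdorff--Young argument of Theorem \ref{KV estimate for W} with $s$ replaced by $s+1$. Your explicit bookkeeping of the $L^{2m/(2-m)}$ integral, yielding the extra factor $(1+t)^{-1/2}$, is exactly what the paper leaves implicit in the phrase ``following the procedure of the proof of Theorem \ref{KV estimate for W}''.
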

\begin{proof}
	According to \eqref{S0 and R0} and Lemma \ref{small freq matrix KV 2D} we can estimate
	\begin{equation}\label{refinement KV 2D 001}
	\begin{split}
	\left|\chi_{\intt}(\xi)|\xi|^s(W(t,\xi)-\hat{S}_0(t,\xi)W_0(\xi))\right|&=\left|\chi_{\intt}(\xi)|\xi|^s(e^{t\hat{\Phi}(|\xi|)}W_0(\xi)-\hat{S}_0(t,\xi)W_0(\xi))\right|\\
	&=\left|\chi_{\intt}(\xi)|\xi|^s\hat{R}_0(t,\xi)W_0(\xi)\right|\\
	&\lesssim\chi_{\intt}(\xi)|\xi|^{s+1}e^{-c|\xi|^2t}|W_0(\xi)|.
	\end{split}
	\end{equation}
	Then, applying the Parseval-Plancherel theorem we derive
	\begin{equation*}
	\left\|\chi_{\intt}(D)\ml{F}^{-1}_{\xi\rightarrow x}(W-\hat{S}_0W_0)(t,\cdot)\right\|_{\dot{H}^s(\mb{R}^2)}\lesssim\left\|\chi_{\intt}(\xi)|\xi|^{s+1}e^{-c|\xi|^2t}W_0(\xi)\right\|_{L^2(\mb{R}^2)}.
	\end{equation*}
	Following the procedure of the proof of Theorem \ref{KV estimate for W} we immediately complete this proof.
\end{proof}
For another, we consider the estimates basing on the $L^q$ norm for $q\in[2,\infty]$ in the next statement.
\begin{thm}\label{thme refinement 02}
	Let us consider the Cauchy problem \eqref{linearproblem Kelvin-Voigt 2D} with initial data satisfying $\left(|D|u_0^{(k)},u_1^{(k)}\right)\in\ml{S}\big(\mb{R}^2\big)\times\ml{S}\big(\mb{R}^2\big)$ for $k=1,2$. Then, we obtain for the solution $W=W(t,\xi)$ to the Cauchy problem \eqref{weshouldKelvinVoigt 2D} the refinement estimate
	\begin{equation*}
	\begin{split}
	&\left\|\chi_{\intt}(D)\ml{F}^{-1}_{\xi\rightarrow x}(W-\hat{S}_0W_0)(t,\cdot)\right\|_{\dot{H}^s_q(\mb{R}^2)}\\
	&\qquad\qquad\lesssim(1+t)^{-\frac{1}{2}\left(s+2\left(\frac{1}{p}-\frac{1}{q}\right)\right)-\frac{1}{2}}\sum\limits_{k=1}^2\left\|\left(|D|u_0^{(k)},u_1^{(k)}\right)\right\|_{L^p(\mb{R}^2)\times L^p(\mb{R}^2)},
	\end{split}
	\end{equation*}
	where $s\geq0$ and $1\leq p\leq 2$ and $\frac{1}{p}+\frac{1}{q}=1$.
\end{thm}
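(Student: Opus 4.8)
The plan is to mimic the proof of Theorem \ref{LpLqawayconjugateline KV 2D}, applied now to the remainder $\hat{R}_0(t,\xi)$ from the low-frequency decomposition $e^{t\hat{\Phi}(|\xi|)}=\hat{S}_0(t,\xi)+\hat{R}_0(t,\xi)$ of \eqref{S0 and R0} instead of to the full propagator. Since $\hat{S}_0(t,\xi)W_0(\xi)$ is precisely the partial Fourier transform of the solution of the reference system \eqref{reference system KV 2D Fourier}, this measures exactly the distance between $W$ and its asymptotic profile in the zone $|\xi|\le\varepsilon$; the cut-off $\chi_{\intt}(D)$ discards the bounded and large frequencies, where everything decays exponentially under the Schwartz assumption. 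The gain over Theorem \ref{LpLqawayconjugateline KV 2D} comes from Lemma \ref{small freq matrix KV 2D}: the bound $|\hat{R}_0(t,\xi)|\lesssim|\xi|e^{-c|\xi|^2t}$ carries one more power of $|\xi|$ than the pointwise estimate of Lemma \ref{Pointwise KV 2D}, and after integration in $\xi$ this extra factor produces precisely the additional $(1+t)^{-1/2}$.

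Concretely, I would prove two endpoint estimates and interpolate. Starting from the pointwise bound \eqref{refinement KV 2D 001}, namely
\[
\left|\chi_{\intt}(\xi)|\xi|^s\big(W(t,\xi)-\hat{S}_0(t,\xi)W_0(\xi)\big)\right|\lesssim\chi_{\intt}(\xi)|\xi|^{s+1}e^{-c|\xi|^2t}|W_0(\xi)|,
\]
the Parseval--Plancherel theorem combined with $\big\|\chi_{\intt}(\xi)|\xi|^{s+1}e^{-c|\xi|^2t}\big\|_{L^\infty(\mb{R}^2)}\lesssim(1+t)^{-\frac{s+1}{2}}$ gives the $L^2$--$L^2$ estimate
\[
\left\|\chi_{\intt}(D)|D|^s\ml{F}^{-1}_{\xi\rightarrow x}(W-\hat{S}_0W_0)(t,\cdot)\right\|_{L^2(\mb{R}^2)}\lesssim(1+t)^{-\frac{s+1}{2}}\left\|\ml{F}^{-1}(W_0)\right\|_{L^2(\mb{R}^2)},
\]
while the Hausdorff--Young inequality together with the radial computation $\big\|\chi_{\intt}(\xi)|\xi|^{s+1}e^{-c|\xi|^2t}\big\|_{L^1(\mb{R}^2)}\lesssim\int_0^{\varepsilon}r^{s+2}e^{-cr^2t}\,dr\lesssim(1+t)^{-\frac{s+3}{2}}$ gives the $L^1$--$L^\infty$ estimate
\[
\left\|\chi_{\intt}(D)|D|^s\ml{F}^{-1}_{\xi\rightarrow x}(W-\hat{S}_0W_0)(t,\cdot)\right\|_{L^\infty(\mb{R}^2)}\lesssim(1+t)^{-\frac{s+3}{2}}\left\|\ml{F}^{-1}(W_0)\right\|_{L^1(\mb{R}^2)}.
\]
Applying the Riesz--Thorin interpolation theorem exactly as in Part~1 of the proof of Theorem \ref{LpLqawayconjugateline KV 2D}, with $\tfrac1p+\tfrac1q=1$ and $1\le p\le2$, the two exponents combine to $-\tfrac12\big(s+2(\tfrac1p-\tfrac1q)\big)-\tfrac12$, which, since $\||D|^sg\|_{L^q(\mb{R}^2)}=\|g\|_{\dot H^s_q(\mb{R}^2)}$, is exactly the asserted $\dot H^s_q$ decay estimate.

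It remains to replace $\|\ml{F}^{-1}(W_0)\|_{L^p}$ by the physical data norm. Since $W=\big(v_t+iA_{\text{diag}}^{1/2}(|\xi|)v,\,v_t-iA_{\text{diag}}^{1/2}(|\xi|)v\big)^{\mathrm{T}}$ with $v=M^{-1}(\eta)\hat{u}$ and $A_{\text{diag}}^{1/2}(|\xi|)=|\xi|\diag(b,a)$, the initial datum $W_0(\xi)$ is assembled from $M^{-1}(\eta)\hat{u}_1(\xi)$ and $|\xi|\diag(b,a)M^{-1}(\eta)\hat{u}_0(\xi)$; the former contributes $u_1$, the latter contributes $|D|u_0$, and $M^{-1}(\eta)$ is a Fourier multiplier whose symbol is homogeneous of degree zero and smooth on $\mb{S}^1$, whence $\|\ml{F}^{-1}(W_0)\|_{L^p(\mb{R}^2)}\lesssim\sum_{k=1}^2\|(|D|u_0^{(k)},u_1^{(k)})\|_{L^p(\mb{R}^2)\times L^p(\mb{R}^2)}$. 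The step requiring the most care is precisely this $L^p$-boundedness of the degree-zero multiplier $M^{-1}(D)$: for $p\in(1,2]$ it is immediate from the Mikhlin--H\"ormander theorem, and because the data are taken in $\ml{S}(\mb{R}^2)$ the borderline case $p=1$ causes no genuine difficulty, but it is the place where one must argue rather than merely compute.
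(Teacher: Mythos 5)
Your argument is exactly the paper's proof: the paper disposes of this theorem in one line by ``combining the proof of Theorem \ref{LpLqawayconjugateline KV 2D} and \eqref{refinement KV 2D 001}'', which is precisely your scheme of feeding the extra factor $|\xi|$ from Lemma \ref{small freq matrix KV 2D} into the $L^2$--$L^2$ and $L^1$--$L^\infty$ endpoint bounds and interpolating by Riesz--Thorin, and your exponent bookkeeping checks out. Your closing remark on the $L^p$-boundedness of the degree-zero multiplier $M^{-1}(\eta)$ addresses a point the paper silently glosses over, but it does not change the route.
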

\begin{proof}
	Combining the proof of Theorem  \ref{LpLqawayconjugateline KV 2D} and  \eqref{refinement KV 2D 001}, the proof is complete.
\end{proof}
\begin{rem}
	Comparing Theorems \ref{KV estimate for W} and \ref{LpLqawayconjugateline KV 2D} with Theorems \ref{thme refinement 01} and \ref{thme refinement 02} we observe that by subtracting $\hat{S}_0(t,\xi)W_0(\xi)$ in the estimates, the decay rate $(1+t)^{-\frac{1}{2}}$ can be gained.
\end{rem}
Before we prove the asymptotic profiles with initial data belonging to weighted $L^1$ spaces, we recall the useful tool that Lemma 2.1 stated in \cite{Ikehata2004}.
\begin{lem}\label{Ikehata lemma L1gamma repeat}
	Let $\gamma\in[0,1]$ and $g\in L^{1,\gamma}(\mb{R}^n)$. Then, the following estimate holds:
	\begin{equation*}
	|\hat{g}(\xi)|\leq C_{\gamma}|\xi|^{\gamma}\|g\|_{L^{1,\gamma}(\mb{R}^n)}+\left|\int_{\mb{R}^n}g(x)dx\right|,
	\end{equation*}
	with some constant $C_{\gamma}>0$.
\end{lem}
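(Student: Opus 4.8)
The plan is to isolate the zero-frequency contribution by the elementary decomposition
\begin{equation*}
\hat{g}(\xi)=\int_{\mb{R}^n}e^{-ix\cdot\xi}g(x)\,dx=\int_{\mb{R}^n}\left(e^{-ix\cdot\xi}-1\right)g(x)\,dx+\int_{\mb{R}^n}g(x)\,dx,
\end{equation*}
which is valid because $g\in L^1(\mb{R}^n)$. Taking absolute values and applying the triangle inequality reduces the claim to the bound
\begin{equation*}
\int_{\mb{R}^n}\left|e^{-ix\cdot\xi}-1\right||g(x)|\,dx\lesssim|\xi|^{\gamma}\|g\|_{L^{1,\gamma}(\mb{R}^n)}.
\end{equation*}

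First I would record the pointwise inequality $\left|e^{-i\theta}-1\right|\leq 2^{1-\gamma}|\theta|^{\gamma}$ for all $\theta\in\mb{R}$ and $\gamma\in[0,1]$; this follows by writing $\left|e^{-i\theta}-1\right|=\left|e^{-i\theta}-1\right|^{1-\gamma}\left|e^{-i\theta}-1\right|^{\gamma}$ and using the two endpoint estimates $\left|e^{-i\theta}-1\right|\leq 2$ and $\left|e^{-i\theta}-1\right|\leq|\theta|$. Applying this with $\theta=x\cdot\xi$ and the Cauchy–Schwarz bound $|x\cdot\xi|^{\gamma}\leq|x|^{\gamma}|\xi|^{\gamma}$ gives $\left|e^{-ix\cdot\xi}-1\right|\leq 2^{1-\gamma}|\xi|^{\gamma}|x|^{\gamma}$.

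Then I would integrate against $|g(x)|$, using $|x|^{\gamma}\leq(1+|x|)^{\gamma}$, to obtain
\begin{equation*}
\int_{\mb{R}^n}\left|e^{-ix\cdot\xi}-1\right||g(x)|\,dx\leq 2^{1-\gamma}|\xi|^{\gamma}\int_{\mb{R}^n}(1+|x|)^{\gamma}|g(x)|\,dx=2^{1-\gamma}|\xi|^{\gamma}\|g\|_{L^{1,\gamma}(\mb{R}^n)},
\end{equation*}
so that the assertion holds with $C_{\gamma}=2^{1-\gamma}$. There is no real obstacle here; the only point requiring a line of argument is the interpolated elementary inequality $\left|e^{-i\theta}-1\right|\leq 2^{1-\gamma}|\theta|^{\gamma}$, and the rest is a direct estimate. (This is exactly Lemma 2.1 of \cite{Ikehata2004}, and the proof reproduces the one given there.)
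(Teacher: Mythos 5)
Your proof is correct, and it is the standard argument for this estimate: the paper itself gives no proof, simply citing Lemma 2.1 of \cite{Ikehata2004}, whose proof is exactly the decomposition $\hat{g}(\xi)=\int(e^{-ix\cdot\xi}-1)g(x)\,dx+\int g(x)\,dx$ combined with the interpolated bound $|e^{-i\theta}-1|\leq 2^{1-\gamma}|\theta|^{\gamma}$ that you use. The only point worth a glance is that the stated form of the lemma presupposes the unnormalized Fourier transform $\hat{g}(\xi)=\int e^{-ix\cdot\xi}g(x)\,dx$ (so that $\hat{g}(0)=\int g$), which is the convention you adopt, so everything is consistent.
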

\begin{thm}\label{thme refinement 03}
	Let us consider the Cauchy problem \eqref{linearproblem Kelvin-Voigt 2D} with initial data satisfying $\left(|D|u^{(k)}_{0},u^{(k)}_{1}\right)\in L^{1,\gamma}\big(\mb{R}^2\big)\times L^{1,\gamma}\big(\mb{R}^2\big)$ for $k=1,2$.  Then, we obtain for the solution $W=W(t,\xi)$ to the Cauchy problem \eqref{weshouldKelvinVoigt 2D} the refinement estimate
	\begin{equation*}
	\begin{split}
	\left\|\chi_{\intt}(D)\ml{F}^{-1}_{\xi\rightarrow x}(W-\hat{S}_0W_0)(t,\cdot)\right\|_{\dot{H}^s(\mb{R}^2)}&\lesssim(1+t)^{-\frac{1+s}{2}-\frac{1+\gamma}{2}}\sum\limits_{k=1}^2\left\|\left(|D|u^{(k)}_{0},u^{(k)}_1\right)\right\|_{L^{1,\gamma}(\mb{R}^2)\times L^{1,\gamma}(\mb{R}^2)}\\
	&\quad+(1+t)^{-\frac{1+s}{2}-\frac{1}{2}}\sum\limits_{k=1}^2\left|\int_{\mb{R}^2}\left(|D|u^{(k)}_0(x)+u^{(k)}_1(x)\right)dx\right|,
	\end{split}
	\end{equation*}
	where $s\geq0$ and  $\gamma\in[0,1]$.
\end{thm}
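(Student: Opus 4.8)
The plan is to combine the remainder estimate $|\hat R_0(t,\xi)|\lesssim |\xi|e^{-c|\xi|^2 t}$ from Lemma \ref{small freq matrix KV 2D} (which already yields the bound $\left|\chi_{\intt}(\xi)|\xi|^s(W-\hat S_0 W_0)(t,\xi)\right|\lesssim \chi_{\intt}(\xi)|\xi|^{s+1}e^{-c|\xi|^2t}|W_0(\xi)|$ as in \eqref{refinement KV 2D 001}) with the improved pointwise control of $W_0(\xi)=\ml{F}\!\left(|D|u_0^{(k)},u_1^{(k)}\right)$ coming from the weighted $L^1$ hypothesis via Lemma \ref{Ikehata lemma L1gamma repeat}. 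Concretely, each component of $W_0$ is (up to harmless constants from $M^{-1}(\eta)$ and the factor $\pm i A_{\mathrm{diag}}^{1/2}$) a linear combination of $\widehat{|D|u_0^{(k)}}$ and $\widehat{u_1^{(k)}}$, so $|W_0(\xi)|\lesssim |\xi|^{\gamma}\sum_k\left\|\left(|D|u_0^{(k)},u_1^{(k)}\right)\right\|_{L^{1,\gamma}} + \sum_k\left|\int_{\mb{R}^2}\bigl(|D|u_0^{(k)}(x)+u_1^{(k)}(x)\bigr)dx\right|$. Here one should remark that the change of variables $v=M^{-1}(\eta)\hat u$ is bounded on $\mb{S}^1$ and that $|D|u_0^{(k)}\in L^{1,\gamma}$ guarantees its Fourier transform is well-defined and the zero-mode $\int |D|u_0^{(k)}\,dx$ makes sense (one may even note $\int |D|u_0^{(k)}dx$ should be read as the appropriate limit, but this is a routine point).

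Next I would insert this bound into the Parseval--Plancherel step:
\begin{equation*}
\left\|\chi_{\intt}(D)\ml{F}^{-1}_{\xi\rightarrow x}(W-\hat S_0 W_0)(t,\cdot)\right\|_{\dot H^s(\mb{R}^2)}\lesssim \left\|\chi_{\intt}(\xi)|\xi|^{s+1}e^{-c|\xi|^2t}W_0(\xi)\right\|_{L^2(\mb{R}^2)}.
\end{equation*}
Splitting $|W_0(\xi)|$ according to the two terms above produces two integrals over $|\xi|\le\varepsilon$, namely (after passing to polar coordinates) $C\sum_k\left\|\left(|D|u_0^{(k)},u_1^{(k)}\right)\right\|_{L^{1,\gamma}}\left(\int_0^\varepsilon r^{2(s+1)+2\gamma+1}e^{-2cr^2t}dr\right)^{1/2}$ and $C\sum_k\left|\int\bigl(|D|u_0^{(k)}+u_1^{(k)}\bigr)dx\right|\left(\int_0^\varepsilon r^{2(s+1)+1}e^{-2cr^2t}dr\right)^{1/2}$. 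The standard scaling estimate $\int_0^\varepsilon r^{\alpha}e^{-2cr^2t}dr\lesssim (1+t)^{-\frac{\alpha+1}{2}}$ for $\alpha>-1$ then gives, respectively, the factors $(1+t)^{-\frac{2(s+1)+2\gamma+2}{4}}=(1+t)^{-\frac{1+s}{2}-\frac{1+\gamma}{2}}$ and $(1+t)^{-\frac{2(s+1)+2}{4}}=(1+t)^{-\frac{1+s}{2}-\frac{1}{2}}$, exactly matching the two summands in the claimed estimate.

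The only genuinely delicate point is the very first one: verifying that the data regularity ``$\left(|D|u_0^{(k)},u_1^{(k)}\right)\in L^{1,\gamma}$'' transfers to the vector $W_0$ with the correct homogeneity so that Lemma \ref{Ikehata lemma L1gamma repeat} applies componentwise, and in particular that the ``integral'' term in the theorem really is $\int\bigl(|D|u_0^{(k)}+u_1^{(k)}\bigr)dx$ and not some other linear combination. This requires tracking the explicit form of $W_0(\xi)=\bigl(v_t(0,\xi)\pm i A_{\mathrm{diag}}^{1/2}(|\xi|)v(0,\xi)\bigr)$ with $v(0,\xi)=M^{-1}(\eta)\hat u_0(\xi)$, $v_t(0,\xi)=M^{-1}(\eta)\hat u_1(\xi)$; since $A_{\mathrm{diag}}^{1/2}(|\xi|)=|\xi|\,\mathrm{diag}(b,a)$, the factor $|\xi|$ is precisely the one that, together with the $|\xi|^{\gamma}$ from Lemma \ref{Ikehata lemma L1gamma repeat} applied to $\widehat{u_0^{(k)}}$, reconstructs the homogeneity $|\xi|^{1+\gamma}$ matching $\widehat{|D|u_0^{(k)}}$ treated directly. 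Modulo bounding $M^{-1}(\eta)$ uniformly on $\mb{S}^1$, this is bookkeeping; everything afterward is the routine polar-coordinate integration already used in Theorems \ref{KV estimate for W} and \ref{thme refinement 01}, so I would simply write ``following the procedure of the proof of Theorem \ref{thme refinement 01} together with Lemma \ref{Ikehata lemma L1gamma repeat} we complete the proof.''
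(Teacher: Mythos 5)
Your proposal follows the paper's own proof essentially verbatim: it starts from the pointwise remainder bound \eqref{refinement KV 2D 001}, applies Lemma \ref{Ikehata lemma L1gamma repeat} to $W_0$, and concludes with the Parseval--Plancherel theorem and the standard polar-coordinate integration. The additional bookkeeping you supply (uniform boundedness of $M^{-1}(\eta)$ on $\mb{S}^1$ and the factor $|\xi|$ from $A_{\text{diag}}^{1/2}(|\xi|)$ reproducing $\ml{F}\big(|D|u_0^{(k)}\big)$) is detail the paper leaves implicit, so the two arguments coincide.
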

\begin{rem}
	Theorem \ref{thme refinement 03} shows that if we take initial data satisfying
	\begin{equation*}
	\left|\int_{\mb{R}^2}\left(|D|u^{(k)}_0(x)+u^{(k)}_1(x)\right)dx\right|=0\,\,\,\,\text{for}\,\,\,\,k=1,2,
	\end{equation*}
	then the decay rates given in Theorem \ref{thme refinement 01} when $m=1$ can be improved by $(1+t)^{-\frac{\gamma}{2}}$ for $\gamma\in(0,1]$.
\end{rem}
\begin{proof}
	From our derived estimate \eqref{refinement KV 2D 001}, we get
	\begin{equation*}
	\begin{split}
	&\left|\chi_{\intt}(\xi)|\xi|^s(W(t,\xi)-\hat{S}_0(t,\xi)W_0(\xi))\right|\\
	&\qquad\lesssim\chi_{\intt}(\xi)|\xi|^{s+1}e^{-c|\xi|^2t}|W_0(\xi)|\\
	&\qquad\lesssim\chi_{\intt}(\xi)|\xi|^{s+1+\gamma}e^{-c|\xi|^2t}\sum\limits_{k=1}^2\left\|\left(|D|u^{(k)}_{0},u^{(k)}_1\right)\right\|_{L^{1,\gamma}(\mb{R}^2)\times L^{1,\gamma}(\mb{R}^2)}\\
	&\qquad\quad+\chi_{\intt}(\xi)|\xi|^{s+1}e^{-c|\xi|^2t}\sum\limits_{k=1}^2\left|\int_{\mb{R}^2}\left(|D|u^{(k)}_0(x)+u^{(k)}_1(x)\right)dx\right|,
	\end{split}
	\end{equation*}
	where we used Lemma \ref{Ikehata lemma L1gamma repeat} in the last inequality.\\
	Next, using the Parseval-Plancherel theorem we immediately complete the proof.
\end{proof}

\section{Weakly coupled system of semilinear elastic waves with Kelvin-Voigt damping in 2D}\label{global existence small data solution KV 2D}
One of our goal in this paper is to develop sharp energy estimates of solutions to the linear elastic waves with Kelvin-Voigt damping in 2D with initial data \begin{equation*}
\left(|D|u_0^{(k)},u_1^{(k)}\right)\in\left(H^s\big(\mb{R}^2\big)\cap L^m\big(\mb{R}^2\big)\right)\times \left(H^s\big(\mb{R}^2\big)\cap L^m\big(\mb{R}^2\big)\right)
\end{equation*} for all $s\geq0$ and $m\in[1,2]$. If initial data is supposed to $\ml{A}_{m,s}\big(\mb{R}^2\big)$ for $s\geq0$ and $m\in[1,2]$, one can obtain the next theorem.
\begin{thm}\label{Thm KV 2D energy}
	Let us consider the Cauchy problem \eqref{linearproblem Kelvin-Voigt 2D} with initial data  satisfying $\left(u^{(k)}_{0},u^{(k)}_{1}\right)\in \ml{A}_{m,s}\big(\mb{R}^2\big)$ for $k=1,2$, where $s\geq0$ and  $m\in[1,2]$. Then, we have the following estimates:
	\begin{align}
	\left\|u^{(k)}(t,\cdot)\right\|_{L^2(\mb{R}^2)}&\lesssim(1+t)^{1-\frac{2-m}{2m}}\sum\limits_{k=1}^2\left\|\left(u^{(k)}_0,u^{(k)}_1\right)\right\|_{\ml{A}_{m,0}(\mb{R}^2)},\label{DATA Kelvin-Vogit 2D solution}\\
	\left\||D|u^{(k)}(t,\cdot)\right\|_{\dot{H}^s(\mb{R}^2)}+\left\|u_t^{(k)}(t,\cdot)\right\|_{\dot{H}^s(\mb{R}^2)}&\lesssim(1+t)^{-\frac{2-m+ms}{2m}}\sum\limits_{k=1}^2\left\|\left(u^{(k)}_0,u^{(k)}_1\right)\right\|_{\ml{A}_{m,s}(\mb{R}^2)}.\label{DATA Kelvin-Vogit 2D derivatives}
	\end{align}
\end{thm}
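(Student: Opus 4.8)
The plan is to prove \eqref{DATA Kelvin-Vogit 2D derivatives} first, directly from the pointwise estimate of Lemma~\ref{Pointwise KV 2D}, and then to deduce the growth estimate \eqref{DATA Kelvin-Vogit 2D solution} by integrating the resulting $\dot H^0$ bound for $u_t^{(k)}$ in time. First I would record the dictionary between $\hat u$ and $W$: since $v=M(\eta)\hat u$ with $M(\eta)$ orthogonal ($M(\eta)^2=I$), and $W=\big(v_t+i{A}_{\mathrm{diag}}^{1/2}v,\,v_t-i{A}_{\mathrm{diag}}^{1/2}v\big)^{\mathrm T}$ with ${A}_{\mathrm{diag}}^{1/2}(|\xi|)=|\xi|\diag(b,a)$, adding and subtracting the upper and lower $\mb{C}^2$-blocks $W^{\mathrm I},W^{\mathrm{II}}$ of $W$ gives $v_t=\tfrac12(W^{\mathrm I}+W^{\mathrm{II}})$ and $|\xi|v=\tfrac{1}{2i}\diag(1/b,1/a)(W^{\mathrm I}-W^{\mathrm{II}})$, whence for all $\xi\neq0$ and $t\geq0$
\begin{equation*}
|\hat u_t(t,\xi)|+|\xi|\,|\hat u(t,\xi)|\lesssim |W(t,\xi)|,\qquad |\hat u(t,\xi)|\lesssim|\xi|^{-1}|W(t,\xi)|.
\end{equation*}
Likewise $v(0,\xi)=M(\eta)\hat u_0(\xi)$ and $v_t(0,\xi)=M(\eta)\hat u_1(\xi)$, so $|W_0(\xi)|\lesssim|\hat u_1(\xi)|+|\xi|\,|\hat u_0(\xi)|$; the two summands on the right will be kept separate in the small-frequency zone.

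For \eqref{DATA Kelvin-Vogit 2D derivatives}, the Parseval--Plancherel theorem together with the first bound above gives
\begin{equation*}
\left\||D|u^{(k)}(t,\cdot)\right\|_{\dot H^s(\mb{R}^2)}+\left\|u_t^{(k)}(t,\cdot)\right\|_{\dot H^s(\mb{R}^2)}\lesssim\left\||\xi|^s W(t,\xi)\right\|_{L^2(\mb{R}^2)}\lesssim\left\||\xi|^s e^{-c\rho(|\xi|)t}W_0(\xi)\right\|_{L^2(\mb{R}^2)},
\end{equation*}
where the last step uses Lemma~\ref{Pointwise KV 2D}. I would then split the $L^2$ norm over $Z_{\intt}(\varepsilon)$, $Z_{\midd}(\varepsilon)$ and $Z_{\extt}(\varepsilon)$. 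On $Z_{\intt}(\varepsilon)$ one has $\rho(|\xi|)\sim|\xi|^2$; inserting $|W_0|\lesssim|\hat u_1|+|\xi|\,|\hat u_0|$, applying H\"older's inequality with $\tfrac12=\tfrac{2-m}{2m}+\tfrac{1}{m'}$ and the Hausdorff--Young inequality, and evaluating the multiplier norms in polar coordinates (namely $\|\chi_{\intt}(\xi)|\xi|^{\sigma}e^{-c|\xi|^2t}\|_{L^{2m/(2-m)}(\mb{R}^2)}\lesssim(1+t)^{-\sigma/2-(2-m)/(2m)}$ for $\sigma=s$ and $\sigma=s+1$), this contribution is $\lesssim(1+t)^{-\frac{2-m+ms}{2m}}\sum_{k=1}^2\big(\|u_0^{(k)}\|_{L^m(\mb{R}^2)}+\|u_1^{(k)}\|_{L^m(\mb{R}^2)}\big)$, using $\tfrac s2+\tfrac{2-m}{2m}=\tfrac{2-m+ms}{2m}$. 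The point where this differs from Theorem~\ref{KV estimate for W} is that the $|\xi|$ accompanying $\hat u_0$ inside $W_0$ is kept together with the Gaussian, so that only $\|u_0^{(k)}\|_{L^m}$ (and not $\||D|u_0^{(k)}\|_{L^m}$, which is not controlled by $\ml{A}_{m,s}$) enters. On $Z_{\midd}(\varepsilon)\cup Z_{\extt}(\varepsilon)$ one has $\rho(|\xi|)\geq c_0>0$, hence an exponential factor $e^{-c't}$, and $\||\xi|^s W_0\|_{L^2(Z_{\midd}\cup Z_{\extt})}\lesssim\sum_{k=1}^2\big(\|u_1^{(k)}\|_{H^s(\mb{R}^2)}+\|u_0^{(k)}\|_{H^{s+1}(\mb{R}^2)}\big)$. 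Collecting the three zones yields \eqref{DATA Kelvin-Vogit 2D derivatives}.

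For \eqref{DATA Kelvin-Vogit 2D solution}, I would avoid estimating $\hat u$ in the Fourier space altogether, which is precisely where the obstacle lies: recovering $\hat u$ from $W$ costs a factor $|\xi|^{-1}$, and $|\xi|^{-1}e^{-c|\xi|^2t}$ fails to be square-integrable (with the relevant Lebesgue exponent) near $\xi=0$ in two dimensions, exactly the non-integrability observed in the remarks following Theorem~\ref{KV estimate for W}. Instead, since the solution satisfies $u^{(k)}(t,\cdot)=u_0^{(k)}+\int_0^t u_t^{(k)}(\tau,\cdot)\,d\tau$ in $L^2(\mb{R}^2)$, invoking \eqref{DATA Kelvin-Vogit 2D derivatives} with $s=0$ gives
\begin{equation*}
\left\|u^{(k)}(t,\cdot)\right\|_{L^2(\mb{R}^2)}\leq\left\|u_0^{(k)}\right\|_{L^2(\mb{R}^2)}+\int_0^t\left\|u_t^{(k)}(\tau,\cdot)\right\|_{L^2(\mb{R}^2)}d\tau\lesssim\left(1+\int_0^t(1+\tau)^{-\frac{2-m}{2m}}d\tau\right)\sum_{k=1}^2\left\|\left(u_0^{(k)},u_1^{(k)}\right)\right\|_{\ml{A}_{m,0}(\mb{R}^2)}.
\end{equation*}
Since $\tfrac{2-m}{2m}\in[0,\tfrac12]\subset[0,1)$ the integral is $\lesssim(1+t)^{1-\frac{2-m}{2m}}$, and $\|u_0^{(k)}\|_{L^2(\mb{R}^2)}\leq\|u_0^{(k)}\|_{H^1(\mb{R}^2)}\lesssim\|(u_0^{(k)},u_1^{(k)})\|_{\ml{A}_{m,0}(\mb{R}^2)}$, which closes the argument. (Alternatively, \eqref{DATA Kelvin-Vogit 2D solution} can be obtained by hand: the components $v^{(\ell)}$ of $v$ solve the decoupled scalar equations $v^{(\ell)}_{tt}+c_\ell^2|\xi|^2\big(v^{(\ell)}_t+v^{(\ell)}\big)=0$ with $c_1=b$, $c_2=a$, whose explicit solution formula $e^{-\frac{c_\ell^2}{2}|\xi|^2t}\big(v^{(\ell)}(0)\cos\omega_\ell t+\omega_\ell^{-1}(\cdots)\sin\omega_\ell t\big)$, $\omega_\ell\sim c_\ell|\xi|$, exhibits the sharp bound $|\hat u(t,\xi)|\lesssim e^{-c|\xi|^2t}\big(\min(t,|\xi|^{-1})|\hat u_1(\xi)|+|\hat u_0(\xi)|\big)$ for $|\xi|\leq\varepsilon$; inserting this into the $Z_{\intt}(\varepsilon)$-integral and arguing as in Theorem~\ref{KV estimate for W} again produces the growth rate $(1+t)^{1-\frac{2-m}{2m}}$.)
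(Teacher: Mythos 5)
Your proposal is correct and follows essentially the same route as the paper: a pointwise Fourier-space bound of the form $|\hat u_t|+|\xi|\,|\hat u|\lesssim e^{-c\rho(|\xi|)t}\bigl(|\hat u_1|+|\xi|\,|\hat u_0|\bigr)$, then H\"older and Hausdorff--Young on the small-frequency zone (keeping the factor $|\xi|$ attached to $\hat u_0$ so that only $\|u_0\|_{L^m}$ is needed), and finally $u^{(k)}(t)=u_0^{(k)}+\int_0^t u_t^{(k)}(\tau)\,d\tau$ to obtain \eqref{DATA Kelvin-Vogit 2D solution} from the $s=0$ case of \eqref{DATA Kelvin-Vogit 2D derivatives}. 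The only difference is cosmetic: the paper imports the pointwise bound from Lemma 2.4 of \cite{IkehataNatsume2012,WuChaiLi2017}, whereas you derive the equivalent bound from Lemma \ref{Pointwise KV 2D} via the dictionary between $\hat u$ and $W$, which makes the argument self-contained.
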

\begin{proof}
	To get the estimate \eqref{DATA Kelvin-Vogit 2D derivatives}, we recall the following estimate in the Fourier space from Lemma 2.4 in the recent papers \cite{IkehataNatsume2012,WuChaiLi2017}:
	\begin{equation*}
	|\xi|^2|\hat{u}(t,\xi)|^2+|\hat{u}_t(t,\xi)|^2\lesssim e^{-c\rho_{\varepsilon_1}(|\xi|)t}\left(|\xi|^2|\hat{u}_0(\xi)|^2+|\hat{u}_1(\xi)|^2\right),
	\end{equation*}
	where 
	\begin{equation*}
	\rho_{\varepsilon_1}(|\xi|):=\left\{
	\begin{aligned}
	&\varepsilon_1|\xi|^2&&\text{for}\,\,\,\,|\xi|\leq1,\\
	&\varepsilon_1&&\text{for}\,\,\,\,|\xi|>1,
	\end{aligned}
	\right.
	\end{equation*}
	with $\varepsilon_1>0$. Then, the application of H\"older's inequality and the Hausdorff-Young inequality immediately implies \eqref{DATA Kelvin-Vogit 2D derivatives}.
	
	To develop the estimate of the solution itself, we only need to combine the following integral formula:
	\begin{equation*}
	u^{(k)}(t,x)=\int_0^tu_{\tau}^{(k)}(\tau,x)d\tau+u_0^{(k)}(x)
	\end{equation*}
	and the estimate for $\left\|u_t^{(k)}(t,\cdot)\right\|_{L^2(\mb{R}^2)}$ in \eqref{DATA Kelvin-Vogit 2D derivatives} to complete \eqref{DATA Kelvin-Vogit 2D solution}.
\end{proof}

Let us consider the following weakly coupled system of semilinear elastic waves with Kelvin-Voigt damping in two dimensional space:
\begin{equation}\label{semilinear problem Kelvin-Voigt 2D}
\left\{
\begin{aligned}
&u_{tt}-a^2\Delta u-\left(b^2-a^2\right)\nabla\divv u+\left(-a^2\Delta -\left(b^2-a^2\right)\nabla \divv\right)u_t=f(u),&&t>0,\,\,x\in\mb{R}^2,\\
&(u,u_t)(0,x)=(u_0,u_1)(x),&&x\in\mb{R}^2,
\end{aligned}
\right.
\end{equation}
where $b^2>a^2>0$ and the nonlinear terms on the right-hand sides are
\begin{equation*}
f(u):=\left(|u^{(2)}|^{p_1},|u^{(1)}|^{p_2}\right)^{\mathrm{T}}\,\,\,\,\text{with}\,\,\,\,p_1,p_2>1.
\end{equation*}
Using estimates \eqref{DATA Kelvin-Vogit 2D solution}, \eqref{DATA Kelvin-Vogit 2D derivatives}, Duhamel's principle and some tools in Harmonic Analysis (e.g. the Gagliardo-Nirenberg inequality), one can prove the global (in time) existence of small data Sobolev solutions to \eqref{semilinear problem Kelvin-Voigt 2D}. 

Before stating our result for the global (in time) existence of small data energy solution, we introduce the balanced exponent $p_{\text{bal}}(m)$ by
\begin{equation}\label{balanced exponent 01}
p_{\text{bal}}(m):=\frac{2(m+2)}{2-m}\,\,\,\,\text{with}\,\,\,\,m\in[1,2),
\end{equation}
and the balanced parameters
\begin{equation}\label{balanced exponent 02}
\alpha_k(m):=\frac{2(1+m)+(3m+2)p_k+mp_1p_2}{2(p_1p_2-1)}\,\,\,\,\text{with}\,\,\,\,m\in[1,2)\,\,\,\,\text{for}\,\,\,\,k=1,2.
\end{equation}
\begin{rem}
	We observe the relation between the balanced exponent \eqref{balanced exponent 01} and balanced parameters \eqref{balanced exponent 02}. For one thing, if we consider the condition $\alpha_1(m)<1$, it also can be rewritten by
	\begin{equation*}
	p_1\left(p_2+1-p_{\text{bal}}(m)\right)>p_{\text{bal}}(m).
	\end{equation*}
	For another, if we consider the condition  $\alpha_2(m)<1$, it also can be rewritten by
	\begin{equation*}
	p_2\left(p_1+1-p_{\text{bal}}(m)\right)>p_{\text{bal}}(m).
	\end{equation*}
\end{rem}
\begin{thm}\label{global existence KV 2D}
	Let $b^2>a^2>0$ in \eqref{semilinear problem Kelvin-Voigt 2D}. Let us assume $p_1,p_2>1$, satisfying one of the following conditions:
	\begin{enumerate}
		\item we assume $p_{\text{bal}}(m)<\min\{p_1;p_2\}$;
		\item we assume $\alpha_1(m)<1$ if $\frac{2}{m}\leq p_2\leq p_{\text{bal}}(m)<p_1$;
		\item we assume $\alpha_2(m)<1$ if $\frac{2}{m}\leq p_1\leq p_{\text{bal}}(m)<p_2$.
	\end{enumerate}
	Then, there exists a constant $\varepsilon_0>0$ such that for any initial data $\left(u_0^{(k)},u_1^{(k)}\right)\in\ml{A}_{m,0}\big(\mb{R}^2\big)$ for $k=1,2,$ and $m\in[1,2)$ with
	\begin{equation*}
	\left\|\left(u_0^{(1)},u_1^{(1)}\right)\right\|_{\ml{A}_{m,0}(\mb{R}^2)}+\left\|\left(u_0^{(2)},u_1^{(2)}\right)\right\|_{\ml{A}_{m,0}(\mb{R}^2)}\leq\varepsilon_0,
	\end{equation*}
	there is uniquely determined energy solution
	\begin{equation*}
	u\in\left(\ml{C}\left([0,\infty),H^1\big(\mb{R}^2\big)\right)\cap\ml{C}^1\left([0,\infty),L^2\big(\mb{R}^2\big)\right)\right)^2
	\end{equation*}
	to \eqref{semilinear problem Kelvin-Voigt 2D}. Moreover, the solutions satisfies the following estimates:
	\begin{equation*}
	\begin{split}
	\left\|u^{(k)}(t,\cdot)\right\|_{L^2(\mb{R}^2)}&\lesssim(1+t)^{1-\frac{2-m}{2m}+\ell_k}\sum\limits_{k=1}^2\left\|\left(u^{(k)}_0,u^{(k)}_1\right)\right\|_{\ml{A}_{m,0}(\mb{R}^2)},\\
	\left\||D| u^{(k)}(t,\cdot)\right\|_{L^2(\mb{R}^2)}+\left\|u_t^{(k)}(t,\cdot)\right\|_{L^2(\mb{R}^2)}&\lesssim(1+t)^{-\frac{2-m}{2m}+\ell_k}\sum\limits_{k=1}^2\left\|\left(u^{(k)}_0,u^{(k)}_1\right)\right\|_{\ml{A}_{m,0}(\mb{R}^2)},
	\end{split}
	\end{equation*}
	where
	\begin{equation}\label{loss of decay}
	0\leq\ell_k=\ell_k(m,p_k):=\left\{
	\begin{aligned}
	&0&&\text{if}\,\,\,\,p_k>p_{\text{bal}}(m),\\
	&\epsilon_0&&\text{if}\,\,\,\,p_k=p_{\text{bal}}(m),\\
	&\tfrac{2-m}{2m}\left(p_{\text{bal}}(m)-p_k\right)&&\text{if}\,\,\,\,p_k<p_{\text{bal}}(m),
	\end{aligned}
	\right.
	\end{equation}
	represent the (no) loss of decay in comparison with the corresponding decay estimates for the solution to the linear Cauchy problem \eqref{linearproblem Kelvin-Voigt 2D} (see Theorem \ref{Thm KV 2D energy}), with $\epsilon_0>0$ being an arbitrary small constant in the limit cases that $p_k=p_{\text{bal}}(m)$ for $k=1,2$.
\end{thm}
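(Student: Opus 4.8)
The plan is to run a standard contraction-mapping argument in a weighted energy space, using Duhamel's principle and the linear decay estimates from Theorem \ref{Thm KV 2D energy} as the driving engine. First I would introduce the solution space
\begin{equation*}
X(T):=\left(\ml{C}\left([0,T],H^1\big(\mb{R}^2\big)\right)\cap\ml{C}^1\left([0,T],L^2\big(\mb{R}^2\big)\right)\right)^2
\end{equation*}
equipped with the norm
\begin{equation*}
\|u\|_{X(T)}:=\sup\limits_{0\leq t\leq T}\sum\limits_{k=1}^2\left((1+t)^{-1+\frac{2-m}{2m}-\ell_k}\left\|u^{(k)}(t,\cdot)\right\|_{L^2(\mb{R}^2)}+(1+t)^{\frac{2-m}{2m}-\ell_k}\left(\left\||D|u^{(k)}(t,\cdot)\right\|_{L^2(\mb{R}^2)}+\left\|u_t^{(k)}(t,\cdot)\right\|_{L^2(\mb{R}^2)}\right)\right),
\end{equation*}
and define the operator $N$ by $Nu:=u^{\mathrm{lin}}+u^{\mathrm{non}}$, where $u^{\mathrm{lin}}$ solves the linear Cauchy problem \eqref{linearproblem Kelvin-Voigt 2D} and, writing $E_1(t,\cdot)$ for the propagator carrying the second datum (i.e.\ the solution of \eqref{linearproblem Kelvin-Voigt 2D} with $(u_0,u_1)=(0,g)$), $u^{\mathrm{non}}(t,x):=\int_0^t E_1(t-\tau,\cdot)\ast_{(x)}f(u)(\tau,\cdot)\,d\tau$ by Duhamel's principle. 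The goal is to show $N:X(T)\to X(T)$ is well defined, maps a small ball into itself, and is a contraction, with constants uniform in $T$; then the unique fixed point is the desired global solution and its norm in $X(T)$ gives the stated decay estimates with the loss $\ell_k$.

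The core estimates are the following. By Theorem \ref{Thm KV 2D energy} the linear part satisfies $\|u^{\mathrm{lin}}\|_{X(T)}\lesssim \sum_{k=1}^2\|(u_0^{(k)},u_1^{(k)})\|_{\ml{A}_{m,0}(\mb{R}^2)}$ (note $\ell_k\geq 0$ only helps here). For the nonlinear part I would split the time integral at $\tau=t/2$: on $[0,t/2]$ use the $L^m\cap L^2\to L^2$ decay of $E_1(t-\tau,\cdot)$ from \eqref{DATA Kelvin-Vogit 2D solution}--\eqref{DATA Kelvin-Vogit 2D derivatives} (treating the forcing $f(u)(\tau,\cdot)$ as the datum $u_1$), and on $[t/2,t]$ use the same estimates with small $t-\tau$, where only boundedness matters. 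The nonlinear terms are controlled by the Gagliardo-Nirenberg inequality: since $|u^{(2)}|^{p_1}$ must be estimated in $L^1(\mb{R}^2)$ and $L^2(\mb{R}^2)$, one needs
\begin{equation*}
\left\||u^{(2)}(\tau,\cdot)|^{p_1}\right\|_{L^r(\mb{R}^2)}=\left\|u^{(2)}(\tau,\cdot)\right\|_{L^{p_1 r}(\mb{R}^2)}^{p_1}\lesssim \left\|u^{(2)}(\tau,\cdot)\right\|_{L^2(\mb{R}^2)}^{p_1(1-\theta)}\left\||D|u^{(2)}(\tau,\cdot)\right\|_{L^2(\mb{R}^2)}^{p_1\theta}
\end{equation*}
for $r\in\{1,2\}$, with $\theta=\theta(p_1 r)=1-\tfrac{2}{p_1 r}\in[0,1]$; this is exactly where the lower bounds $p_k>1$ (for $r=2$, to have $\theta\geq 0$) and $p_k\geq \tfrac{2}{m}$ in cases (2),(3) and the condition $p_{\mathrm{bal}}(m)<\min\{p_1;p_2\}$ in case (1) enter — they guarantee the Gagliardo-Nirenberg exponents are admissible and that the resulting time integrals converge (or, in the borderline/subcritical ranges, produce at worst the logarithmic or polynomial loss $\ell_k$). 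Substituting the $X(T)$-norm bounds $\|u^{(k)}(\tau,\cdot)\|_{L^2}\lesssim (1+\tau)^{1-\frac{2-m}{2m}+\ell_k}\|u\|_{X(T)}$ etc., each piece of the nonlinear integral reduces to an elementary one-dimensional integral of the form $\int_0^{t/2}(1+t-\tau)^{-\sigma}(1+\tau)^{-\kappa}\,d\tau$ plus $\int_{t/2}^t(\dots)$, whose decay I would read off via the standard lemma on such integrals; the bookkeeping is arranged precisely so that the exponents match $(1+t)^{1-\frac{2-m}{2m}+\ell_k}$ and $(1+t)^{-\frac{2-m}{2m}+\ell_k}$, giving $\|u^{\mathrm{non}}\|_{X(T)}\lesssim \|u\|_{X(T)}^{p_1}+\|u\|_{X(T)}^{p_2}$ (more precisely a sum of mixed monomials $\|u\|_{X(T)}^{p_1}$, $\|u\|_{X(T)}^{p_2}$). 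A parallel computation with $f(u)-f(v)$, together with the Lipschitz-type bound $\left||u^{(2)}|^{p_1}-|v^{(2)}|^{p_1}\right|\lesssim |u^{(2)}-v^{(2)}|\big(|u^{(2)}|^{p_1-1}+|v^{(2)}|^{p_1-1}\big)$ and Hölder, yields $\|Nu-Nv\|_{X(T)}\lesssim \big(\|u\|_{X(T)}^{p_k-1}+\|v\|_{X(T)}^{p_k-1}\big)\|u-v\|_{X(T)}$.

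Combining these, if $\varepsilon_0$ is chosen small and $\|u\|_{X(T)},\|v\|_{X(T)}\leq C_0\varepsilon_0$ with $C_0$ the constant from the linear estimate, then $N$ maps the ball of radius $2C_0\varepsilon_0$ in $X(T)$ into itself and is a contraction there, uniformly in $T$; letting $T\to\infty$ gives the global solution, uniqueness in $X(T)$ (hence in the stated class, by a standard continuation argument), and the asserted decay estimates. I expect the main obstacle to be the careful verification that the Gagliardo-Nirenberg admissibility conditions together with the hypotheses (1)--(3) on $p_1,p_2,m$ produce exactly the exponents in \eqref{loss of decay}: one has to track how the regime $p_k\lessgtr p_{\mathrm{bal}}(m)$ controls whether $\int_0^{t/2}(1+\tau)^{-\kappa}d\tau$ is bounded, logarithmically divergent, or polynomially divergent, and in the mixed cases (2),(3) distribute the loss between the two components via the coupling so that $\alpha_k(m)<1$ is precisely the integrability threshold — this is the delicate part of the bookkeeping, everything else being routine once the solution space and the splitting are fixed.
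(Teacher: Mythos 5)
Your proposal follows essentially the same route as the paper: the same weighted solution space $X(T)$ with the loss-of-decay weights $\ell_k$, Duhamel's principle with the $\left(L^2\cap L^m\right)-L^2$ and $L^2-L^2$ linear estimates split at $\tau=t/2$, Gagliardo--Nirenberg control of the nonlinearity, the case distinction governed by $p_{\text{bal}}(m)$ and $\alpha_k(m)$, and Banach's fixed-point theorem. The only slip is that the nonlinearity must be estimated in $L^m\big(\mb{R}^2\big)\cap L^2\big(\mb{R}^2\big)$ rather than $L^1\big(\mb{R}^2\big)\cap L^2\big(\mb{R}^2\big)$ (i.e. $r\in\{m,2\}$, which is exactly where your admissibility condition $p_k\geq\tfrac{2}{m}$ comes from); otherwise the argument is the one in the paper.
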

\begin{rem}
	Let us recall the weakly coupled system of semilinear damped wave equations
	\begin{equation}\label{weakly coupled system of damped wave}
	\left\{
	\begin{aligned}
	&u_{tt}-\Delta u+u_t=|v|^{p_1},&&t>0,\,\,x\in\mb{R}^n,\\
	&v_{tt}-\Delta v+v_t=|u|^{p_2},&&t>0,\,\,x\in\mb{R}^n,\\
	&(u,u_t,v,v_t)(0,x)=(u_0,u_1,v_0,v_1)(x),&&x\in\mb{R}^n,
	\end{aligned}
	\right.
	\end{equation}
	with $n\in\mb{N}$ and $p_1,p_2>1$. The papers \cite{Narazaki2009,NishiharaWakasugi,SunWang2007} proved the condition for the existence of global (in time) Sobolev solutions to \eqref{weakly coupled system of damped wave}, which can be described by
	\begin{equation}\label{condition GESDS damped wave}
	\frac{1+\max\{p_1;p_2\}}{p_1p_2-1}<\frac{n}{2},\,\,\,\,\text{and especially in two dimensional case}\,\,\,\,\frac{1+\max\{p_1;p_2\}}{p_1p_2-1}<1.
	\end{equation}
	For the system \eqref{semilinear problem Kelvin-Voigt 2D}, we interpret the term $\left(-a^2\Delta-\left(b^2-a^2\right)\nabla\divv\right)u_t$ as a damping term for the elastic waves. Thus, we point out the condition for the global (in time) existence of small data energy solution to \eqref{semilinear problem Kelvin-Voigt 2D} is
	\begin{equation*}
	\alpha_{\max}(m)=\max\left\{\alpha_1(m);\alpha_2(m)\right\}=\frac{1+m+\frac{3m+2}{2}\max\{p_1;p_2\}+\frac{m}{2}p_1p_2}{p_1p_2-1}<1.
	\end{equation*}
\end{rem}
\begin{proof}
	First of all, by Duhamel's principle, we can reduce to consider the linear problem \eqref{linearproblem Kelvin-Voigt 2D}. In the following, we denote by $K_0=K_0(t,x)$ and $K_1=K_1(t,x)$ the fundamental solutions to the linear problem, corresponding to initial data, namely,
	\begin{equation*}
	u(t,x)=K_0(t,x)\ast_{(x)}u_0(x)+K_1(t,x)\ast_{(x)}u_1(x)
	\end{equation*}
	is the solution to \eqref{linearproblem Kelvin-Voigt 2D}.
	
	Let us define for $T>0$ the spaces of solutions $X(T)$ by
	\begin{equation*}
	X(T):=\left(\ml{C}\left([0,T],H^1\big(\mb{R}^2\big)\right)\cap \ml{C}^1\left([0,T],L^2\big(\mb{R}^2\big)\right)\right)^2
	\end{equation*}
	with the corresponding norm
	\begin{equation*}
	\|u\|_{X(T)}:=\sup\limits_{t\in[0,T]}\left((1+t)^{-\ell_1}M_1\left(t;u^{(1)}\right)+(1+t)^{-\ell_2}M_2\left(t;u^{(2)}\right)\right),
	\end{equation*}
	where
	\begin{align*}
	M_k\left(t;u^{(k)}\right):=&(1+t)^{-1+\frac{2-m}{2m}}\left\|u^{(k)}(t,\cdot)\right\|_{L^2(\mb{R}^2)}\\
	&+(1+t)^{\frac{2-m}{2m}}\left(\left\||D| u^{(k)}(t,\cdot)\right\|_{L^2(\mb{R}^2)}+\left\|u_t^{(k)}(t,\cdot)\right\|_{L^2(\mb{R}^2)}\right),
	\end{align*}
	and the parameters in the loss of decay ($\ell_k>0$) and no loss of decay ($\ell_k=0$) are defined in \eqref{loss of decay}.
	
	Next, we consider the integral operator $N:X(T)\rightarrow X(T)$, which is defined by 
	\begin{equation*}
	Nu(t,x):=u_{\text{lin}}(t,x)+u_{\text{non}}^{(1)}(t,x)+u_{\text{non}}^{(2)}(t,x),	
	\end{equation*}
	where 
	\begin{equation*}
	\begin{split}
	u_{\text{lin}}(t,x)&=K_0(t,x)\ast_{(x)}u_0(x)+K_1(t,x)\ast_{(x)}u_1(x),\\
	u^{(1)}_{\text{non}}(t,x)&=\int_0^tK_1(t-\tau,x)\ast_{(x)}\left(|u^{(2)}(\tau,x)|^{p_1},0\right)^{\mathrm{T}}d\tau,\\
	u^{(2)}_{\text{non}}(t,x)&=\int_0^tK_1(t-\tau,x)\ast_{(x)}\left(0,|u^{(1)}(\tau,x)|^{p_2}\right)^{\mathrm{T}}d\tau.
	\end{split}
	\end{equation*}
	
	From Theorem \ref{Thm KV 2D energy} and $\ell_k\geq0$ for $k=1,2$, we can get the following estimates:
	\begin{equation*}
	\left\|u_{\text{lin}}\right\|_{X(T)}\lesssim\sum\limits_{k=1}^2\left\|\left(u_0^{(k)},u_1^{(k)}\right)\right\|_{\ml{A}_{m,0}(\mb{R}^2)}.
	\end{equation*}
	
	In the next step we should estimate these terms 
	\begin{equation*}
	\left\|\partial_t^j|D|^l u_{\text{non}}^{(1)}(t,\cdot)\right\|_{L^2(\mb{R}^2)}\,\,\,\,\text{and}\,\,\,\,\left\|\partial_t^j|D|^l u_{\text{non}}^{(2)}(t,\cdot)\right\|_{L^2(\mb{R}^2)}
	\end{equation*}
	for $j+l=0,1$ and $j,l\in\mb{N}$.\\
	Applying the classical Gagliardo-Nirenberg inequality, we may obtain
	\begin{equation*}
	\begin{split}
	\left\||u^{(2)}(\tau,\cdot)|^{p_1}\right\|_{L^m(\mb{R}^2)}&\lesssim(1+\tau)^{\left(-\frac{2-m}{2m}+\ell_2\right)p_1+\frac{2}{m}}\|u\|_{X(\tau)}^{p_1},\\
	\left\||u^{(1)}(\tau,\cdot)|^{p_2}\right\|_{L^m(\mb{R}^2)}&\lesssim(1+\tau)^{\left(-\frac{2-m}{2m}+\ell_1\right)p_2+\frac{2}{m}}\|u\|_{X(\tau)}^{p_2},\\
	\end{split}
	\end{equation*}
	where we use our assumption $\frac{2}{m}\leq\min\{p_1;p_2\}$ for $m\in[1,2)$.\\
	For one thing, in order to estimate of $u^{(k)}_{\text{non}}$ for $k=1,2,$ we apply the derived $\left(L^2\cap L^m\right)-L^2$ estimate in $[0,t]$. For another, we use the derived $\left(L^2\cap L^m\right)-L^2$ estimate in $[0,t/2]$ and the derive $L^2-L^2$ estimate in $[t/2,t]$ to estimate  $\partial_t^j|D|^lu^{(k)}_{\text{non}}$ for $j+l=1$ and $k=1,2$. Therefore, we obtain the following estimates for $j+l=0,1$ and $j,l\in\mb{N}$:
	\begin{equation*}
	\begin{split}
	&(1+t)^{j+l-1+\frac{2-m}{2m}-\ell_1}\left\|\partial_t^j|D|^lu^{(1)}_{\text{non}}(t,\cdot)\right\|_{L^2(\mb{R}^2)}\\
	&\qquad\qquad\lesssim(1+t)^{-\ell_1}\|u\|_{X(t)}^{p_1}\left(\int_0^{t/2}(1+\tau)^{\left(-\frac{2-m}{2m}+\ell_2\right)p_1+\frac{2}{m}}d\tau+(1+t)^{\left(-\frac{2-m}{2m}+\ell_2\right)p_1+\frac{2}{m}+1}\right),\\
	&(1+t)^{j+l-1+\frac{2-m}{2m}-\ell_2}\left\|\partial_t^j|D|^lu^{(2)}_{\text{non}}(t,\cdot)\right\|_{L^2(\mb{R}^2)}\\
	&\qquad\qquad\lesssim(1+t)^{-\ell_2}\|u\|_{X(t)}^{p_2}\left(\int_0^{t/2}(1+\tau)^{\left(-\frac{2-m}{2m}+\ell_1\right)p_2+\frac{2}{m}}d\tau+(1+t)^{\left(-\frac{2-m}{2m}+\ell_1\right)p_2+\frac{2}{m}+1}\right).
	\end{split}
	\end{equation*}
	We now need to distinguish between three cases. Without loss of generality, we only give the proof for the case $p_1>p_2$.
	
	\emph{Case 1:} We assume $p_{\text{bal}}(m)<\min\{p_1;p_2\}$.\\
	In this case it allows us to assume no loss of decay, i.e., $\ell_1=\ell_2=0$. Our assumption $p_{\text{bal}}(m)<\min\{p_1;p_2\}$ immediately leads to
	\begin{equation*}
	-\frac{2-m}{2m}p_1+\frac{2}{m}<-1\,\,\,\,\text{and}\,\,\,\,-\frac{2-m}{2m}p_2+\frac{2}{m}<-1.
	\end{equation*}
	Hence, we have estimates for $j+l=0,1$ and $j,l\in\mb{N}$
	\begin{equation*}
	\begin{split}
	(1+t)^{j+l-1+\frac{2-m}{2m}}\left(\left\|\partial_t^j|D|^lu^{(1)}_{\text{non}}(t,\cdot)\right\|_{L^2(\mb{R}^2)}+\left\|\partial_t^j|D|^lu^{(2)}_{\text{non}}(t,\cdot)\right\|_{L^2(\mb{R}^2)}\right)\lesssim\|u\|_{X(t)}^{p_1}+\|u\|_{X(t)}^{p_2}.
	\end{split}
	\end{equation*}
	
	\emph{Case 2:} We assume $\alpha_1(m)<1$ if $\frac{2}{m}\leq p_2\leq p_{\text{bal}}(m)<p_1$.\\
	In this case it allows us to assume loss of decay only for the second component and its derivatives with respect to $x$ and $t$, i.e., $\ell_1=0$ and
	\begin{equation}\label{decay rate KV 2D}
	\ell_2=\left\{
	\begin{aligned}
	&\epsilon_0&&\text{if}\,\,\,\,p_2=p_{\text{bal}}(m),\\
	&\tfrac{2-m}{2m}\left(p_{\text{bal}}(m)-p_2\right)&&\text{if}\,\,\,\,p_2<p_{\text{bal}}(m).
	\end{aligned}
	\right.
	\end{equation}
	Due to the assumption $\frac{2}{m}\leq p_2\leq p_{\text{bal}}(m)$, the parameter $\ell_2$ chosen in \eqref{decay rate KV 2D} is positive. Moreover, the assumption $\alpha_1(m)<1$ implies that
	\begin{equation}\label{equiv}
	1+\frac{2}{m}-\frac{2-m}{2m}p_1+\frac{2-m}{2m}\left(\frac{2(m+2)}{2-m}-p_2\right)p_1<0.
	\end{equation}
	We can get these estimates from the combination of the parameter $\ell_2$ chosen in \eqref{decay rate KV 2D}, our assumptions \eqref{equiv} and $\frac{2}{m}\leq p_2\leq p_{\text{bal}}(m)<p_1$
	\begin{equation*}
	-\frac{2-m}{2m}p_2+\frac{2}{m}+1-\ell_2\leq 0\,\,\,\,\text{and}\,\,\,\,\left(-\frac{2-m}{2m}+\ell_2\right)p_1+\frac{2}{m}+1<0.
	\end{equation*}
	Then, the following estimates hold for $j+l=0,1$ and $j,l\in\mb{N}$:
	\begin{equation*}
	\begin{split}
	(1+t)^{j+l-1+\frac{2-m}{2m}}\left\|\partial_t^j|D|^lu^{(1)}_{\text{non}}(t,\cdot)\right\|_{L^2(\mb{R}^2)}&\lesssim\|u\|_{X(t)}^{p_1},\\
	(1+t)^{j+l-1+\frac{2-m}{2m}-\ell_2}\left\|\partial_t^j|D|^lu^{(2)}_{\text{non}}(t,\cdot)\right\|_{L^2(\mb{R}^2)}&\lesssim\|u\|_{X(t)}^{p_2}.
	\end{split}
	\end{equation*}
	Finally, combining all of the derived estimate, we can prove
	\begin{equation}\label{important 2D KV}
	\|Nu\|_{X(T)}\lesssim\sum\limits_{k=1}^2\left\|\left(u_0^{(k)},u_1^{(k)}\right)\right\|_{\ml{A}_{m,0}(\mb{R}^2)}+\sum\limits_{k=1}^2\|u\|_{X(T)}^{p_k},
	\end{equation}
	uniform with respect to $T\in[0,\infty)$.
	
	To derive the Lipschitz condition, we can apply H\"older's inequality and the classical Gagliardo-Nirenberg inequality to get
	\begin{equation}\label{improtant 2D KV Lip}
	\left\|Nu-N\bar{u}\right\|_{X(T)}\lesssim\|u-\bar{u}\|_{X(T)}\sum\limits_{k=1}^2\left(\|u\|_{X(T)}^{p_k-1}+\|\bar{u}\|_{X(T)}^{p_k-1}\right),
	\end{equation}
	uniform with respect to $T\in[0,\infty)$.
	
	These derived estimates \eqref{important 2D KV} and \eqref{improtant 2D KV Lip} show the mapping $N:X(T)\rightarrow X(T)$ is a contraction for initial data satisfying
	\begin{equation*}
	\left\|\left(u_0^{(1)},u_1^{(1)}\right)\right\|_{\ml{A}_{m,0}(\mb{R}^2)}+\left\|\left(u_0^{(2)},u_1^{(2)}\right)\right\|_{\ml{A}_{m,0}(\mb{R}^2)}\leq\varepsilon_0,
	\end{equation*}
	with small constant $\varepsilon_0>0$. According to Banach's fixed-point theorem, we complete the proof.
\end{proof}

\section{Treatment of elastic waves with Kelvin-Voigt damping in 3D}\label{3D model section}

In this section we consider the following Cauchy problem for weakly coupled system of semilinear elastic waves with Kelvin-Voigt damping in 3D:
\begin{equation}\label{semi linearproblem Kelvin-Voigt}
\left\{
\begin{aligned}
&u_{tt}-a^2\Delta u-\left(b^2-a^2\right)\nabla\divv u+\left(-a^2\Delta-\left(b^2-a^2\right)\nabla\divv\right)u_t=f(u),&&t>0,\,\,x\in\mb{R}^3,\\
&(u,u_t)(0,x)=(u_0,u_1)(x),&&x\in\mb{R}^3,
\end{aligned}
\right.
\end{equation}
where $b^2>a^2>0$ and the nonlinear terms on the right-hand sides are
\begin{equation*}
f(u):=\left(|u^{(3)}|^{p_1},|u^{(1)}|^{p_2},|u^{(2)}|^{p_3}\right)^{\mathrm{T}}\,\,\,\,\text{with}\,\,\,\,p_1,p_2,p_3>1.
\end{equation*}
For the corresponding linearized problem
\begin{equation}\label{linearproblem Kelvin-Voigt}
\left\{
\begin{aligned}
&u_{tt}-a^2\Delta u-\left(b^2-a^2\right)\nabla\divv u+\left(-a^2\Delta-\left(b^2-a^2\right)\nabla\divv\right)u_t=0,&&t>0,\,\,x\in\mb{R}^3,\\
&(u,u_t)(0,x)=(u_0,u_1)(x),&&x\in\mb{R}^3,
\end{aligned}
\right.
\end{equation}
it allows us to use the Helmholtz decomposition.

To begin with, let us recall the following orthogonal decomposition:
\begin{equation*}
\left(L^2\big(\mb{R}^3\big)\right)^3=\overline{\nabla H^1\big(\mb{R}^3\big)}\oplus \ml{D}_0\big(\mb{R}^3\big),
\end{equation*}
where the space $\overline{\nabla H^1\big(\mb{R}^3\big)}$ denotes the vector fields with divergence zero and $ \ml{D}_0\big(\mb{R}^3\big)$ denotes the vector fields with curl zero (c.f. \cite{Leis1986}).\\
Thus, we can decompose the solution $u=u(t,x)$ to the linearized problem \eqref{linearproblem Kelvin-Voigt} into a potential and a solenoidal part
\begin{equation*}
u=u^{p_0}\oplus u^{s_0},
\end{equation*}
where the vector unknown $u^{p_0}=u^{p_0}(t,x)$ stands for rotation-free and the vector unknown $u^{s_0}=u^{s_0}(t,x)$ stands for divergence-free in a weak sense.

Taking account into the relation $\nabla\divv u=\nabla\times(\nabla\times u)+\Delta u$ in three dimensions, we can decoupled the system \eqref{linearproblem Kelvin-Voigt} into two viscoelastic damped wave equations with different propagation speeds $a$ as well as $b$, respectively,
\begin{equation}\label{Helmoholtz s0}
\left\{
\begin{aligned}
&u^{s_0}_{tt}-a^2\Delta u^{s_0}-a^2\Delta u^{s_0}_t=0,&&t>0,\,\,x\in\mb{R}^3,\\
&\left(u^{s_0},u^{s_0}_t\right)(0,x)=\left(u^{s_0}_0,u^{s_0}_1\right)(x),&&x\in\mb{R}^3,
\end{aligned}
\right.
\end{equation}
and
\begin{equation}\label{Helmoholtz p0}
\left\{
\begin{aligned}
&u^{p_0}_{tt}-b^2\Delta u^{p_0}-b^2\Delta u^{p_0}_t=0,&&t>0,\,\,x\in\mb{R}^3,\\
&\left(u^{p_0},u^{p_0}_t\right)(0,x)=\left(u^{p_0}_0,u^{p_0}_1\right)(x),&&x\in\mb{R}^3.
\end{aligned}
\right.
\end{equation}
The well-posedness of weak solutions to \eqref{Helmoholtz s0} and \eqref{Helmoholtz p0} have been studied in \cite{IkehataTodorovaYordanov2013}, and the well-posedness of distribution solutions have been investigated in \cite{ReissigEbert2018}. The $L^2-L^2$ estimates and $\left(L^2\cap L^m\right)-L^2$ estimates of the solution with $m\in[1,2)$ also have been developed in  \cite{ReissigEbert2018,DabbiccoReissig2014}. Furthermore, $L^p-L^q$ estimates not necessary on the conjugate line of solution to the Cauchy problems \eqref{Helmoholtz s0} or \eqref{Helmoholtz p0} has been investigated in \cite{Ponce1985,Shibata2000}.   Lastly, we mention that the asymptotic profiles of solution with initial data taking from $L^{1,\gamma}\big(\mb{R}^3\big)$ with $\gamma\in[0,1]$ have been studied in \cite{Ikehata2014,Michihisa201801,Michihisa2018}.

To study the semilinear problem \eqref{semi linearproblem Kelvin-Voigt}, we need to derive $\left(L^2\cap L^m\right)-L^2$ estimates and $L^2-L^2$ estimates of solution to the linearized problem \eqref{linearproblem Kelvin-Voigt}. According to the paper \cite{WuChaiLi2017}, one can obtain the next estimates.
\begin{thm}\label{Thm KV 3D energy}
	Let us consider the Cauchy problem \eqref{linearproblem Kelvin-Voigt} with initial data satisfying $\left(u_0^{(k)},u_1^{(k)}\right)\in\ml{A}_{m,s}\big(\mb{R}^3\big)$ for $k=1,2,3$, where $s\geq0$ and $m\in[1,2]$. Then, we have the following estimates:
	\begin{equation*}
	\begin{split}
	&\left\|u^{(k)}(t,\cdot)\right\|_{L^2(\mb{R}^3)}\lesssim(1+t)^{-\frac{6-5m}{4m}}\sum\limits_{k=1}^3\left\|\left(u^{(k)}_0,u^{(k)}_1\right)\right\|_{\ml{A}_{m,0}(\mb{R}^3)}\,\,\,\,\text{if}\,\,\,\,m\in\left[1,\frac{6}{5}\right),\\
	&\left\||D|u^{(k)}(t,\cdot)\right\|_{\dot{H}^s(\mb{R}^3)}+\left\|u_t^{(k)}(t,\cdot)\right\|_{\dot{H}^s(\mb{R}^3)}\lesssim(1+t)^{-\frac{6-3m+2sm}{4m}}\sum\limits_{k=1}^3\left\|\left(u^{(k)}_0,u^{(k)}_1\right)\right\|_{\ml{A}_{m,s}(\mb{R}^3)}.
	\end{split}
	\end{equation*}
\end{thm}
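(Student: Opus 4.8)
The plan is to use the Helmholtz decomposition that has just been set up in this section. Writing $u=u^{p_0}\oplus u^{s_0}$ decouples \eqref{linearproblem Kelvin-Voigt} into the two vector‑valued viscoelastic damped wave equations \eqref{Helmoholtz s0} and \eqref{Helmoholtz p0}; on the Fourier side this is the statement that, after applying the orthogonal projections $\pi_1(\eta):=\eta\eta^{\mathrm{T}}$ onto $\mathrm{span}(\eta)$ and $\pi_2(\eta):=I-\eta\eta^{\mathrm{T}}$ onto $\eta^{\perp}$, every Cartesian component of $\widehat{u^{p_0}}$ and of $\widehat{u^{s_0}}$ solves the scalar ODE $v_{tt}+c^2|\xi|^2(v_t+v)=0$ with $c=b$, respectively $c=a$, and with data $\pi_j(\eta)\hat{u}_0$, $\pi_j(\eta)\hat{u}_1$. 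Since $|\pi_j(\eta)w|\le|w|$ for every $w\in\mb{C}^3$, any estimate proved for one such scalar component transfers to $u$ after summing over $j=1,2$ and over $k=1,2,3$. The first thing I would emphasize is that wherever the argument calls for an $L^m$ norm of a projected datum, one should instead keep the projection on the Fourier side and bound $\|\pi_j(\eta)\hat{u}_0\|_{L^{m'}}\le\|\hat{u}_0\|_{L^{m'}}\lesssim\|u_0\|_{L^m}$ directly by the Hausdorff–Young inequality (and likewise for $u_1$), so that one never has to invoke $L^m$‑boundedness of the Helmholtz projection in physical space, which fails at $m=1$.

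Next I would record the explicit multipliers for $v_{tt}+c^2|\xi|^2(v_t+v)=0$. Its characteristic roots $\lambda_\pm^c(|\xi|)$ satisfy $\lambda_+^c+\lambda_-^c=-c^2|\xi|^2$ and $\lambda_+^c\lambda_-^c=c^2|\xi|^2$, so $\hat v=\widehat{K_0^c}\hat v_0+\widehat{K_1^c}\hat v_1$ with
\[
\widehat{K_1^c}=\frac{e^{\lambda_+^c t}-e^{\lambda_-^c t}}{\lambda_+^c-\lambda_-^c},\qquad
\widehat{K_0^c}=\frac{\lambda_+^c e^{\lambda_-^c t}-\lambda_-^c e^{\lambda_+^c t}}{\lambda_+^c-\lambda_-^c},\qquad
\partial_t\widehat{K_0^c}=-c^2|\xi|^2\widehat{K_1^c},\qquad
\partial_t\widehat{K_1^c}=\frac{\lambda_+^c e^{\lambda_+^c t}-\lambda_-^c e^{\lambda_-^c t}}{\lambda_+^c-\lambda_-^c}.
\]
For $|\xi|\le\varepsilon$ one has $\lambda_\pm^c=-\tfrac{c^2}{2}|\xi|^2\pm i c|\xi|\sqrt{1-\tfrac{c^2}{4}|\xi|^2}$, whence the pointwise bounds $|\widehat{K_0^c}|,\ |\partial_t\widehat{K_1^c}|\lesssim e^{-c'|\xi|^2t}$ and $|\widehat{K_1^c}|,\ |\xi|^{-2}|\partial_t\widehat{K_0^c}|\lesssim\min\{t,|\xi|^{-1}\}e^{-c'|\xi|^2t}\le|\xi|^{-1}e^{-c'|\xi|^2t}$; for $|\xi|\ge\varepsilon$, the dissipative structure (Lemma 2.4 of \cite{IkehataNatsume2012,WuChaiLi2017}) gives exponential decay $e^{-c't}$ of all these quantities on the relevant Sobolev level. (Alternatively one may simply quote the $\left(L^2\cap L^m\right)-L^2$ and $L^2-L^2$ estimates for viscoelastic damped waves from \cite{ReissigEbert2018,DabbiccoReissig2014,WuChaiLi2017} and skip this computation.)

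Then I would split into the three frequency zones. In the low zone $|\xi|\le\varepsilon$, apply H\"older's inequality with exponents $\tfrac{2m}{2-m}$ and $m'$ together with the Hausdorff–Young step above, using
\[
\big\|\chi_{\intt}(\xi)|\xi|^{\sigma}e^{-c'|\xi|^2t}\big\|_{L^{\frac{2m}{2-m}}(\mb{R}^3)}\lesssim(1+t)^{-\frac{3(2-m)}{4m}-\frac{\sigma}{2}}\qquad\text{whenever }\sigma>-\tfrac{3(2-m)}{2m},
\]
noting that $\sigma=-1$ is admissible exactly when $\tfrac{2m}{2-m}<3$, i.e. $m<\tfrac65$. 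For $\|u^{(k)}(t,\cdot)\|_{L^2}$ the $u_1$‑contribution carries the factor $|\xi|^{-1}$ and yields the dominant rate $(1+t)^{-\frac{6-5m}{4m}}$ (the $u_0$‑contribution decays like $(1+t)^{-\frac{6-3m}{4m}}$, faster); for $\||D|^{s+1}u^{(k)}\|_{L^2}$ and $\|u_t^{(k)}\|_{\dot H^s}$ the relevant power is $\sigma=s\ge0$, again dominated by the $u_1$‑term and giving $(1+t)^{-\frac{6-3m+2ms}{4m}}$ (the $u_0$‑term gives $(1+t)^{-\frac{6-m+2ms}{4m}}$). In the middle zone $\varepsilon\le|\xi|\le\tfrac1\varepsilon$ and the high zone $|\xi|\ge\tfrac1\varepsilon$ all multipliers and the powers $|\xi|^{\pm1}$ are harmless and the factor $e^{-c't}$ dominates, so these contributions are $\lesssim e^{-c't}\|(u_0^{(k)},u_1^{(k)})\|_{L^2\times L^2}$ for the solution itself and $\lesssim e^{-c't}\|(u_0^{(k)},u_1^{(k)})\|_{H^{s+1}\times H^s}$ for the derivatives. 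Combining all three zones, summing over $j$ and $k$, and using $\ml{A}_{m,s}(\mb{R}^3)=\big(H^{s+1}\cap L^m\big)\times\big(H^{s}\cap L^m\big)$ yields the stated estimates. The main point requiring care is the interplay between the Helmholtz decomposition and the additional $L^m$ regularity — one must keep the projection on the Fourier side so that Hausdorff–Young is applied to the original data, not to their projections — and, closely related, the sharp low‑frequency integrability of $|\xi|^{-1}e^{-c|\xi|^2t}$ in $L^{\frac{2m}{2-m}}(\mb{R}^3)$, which is precisely what forces the restriction $m\in[1,\tfrac65)$ in the estimate for the solution itself while being absent for the higher‑order energies.
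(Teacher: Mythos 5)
Your argument is correct and follows exactly the route the paper sets up in Section \ref{3D model section}: decouple \eqref{linearproblem Kelvin-Voigt} via the Helmholtz decomposition into the two scalar viscoelastic damped wave equations \eqref{Helmoholtz s0} and \eqref{Helmoholtz p0} and then run the standard low/middle/high frequency multiplier estimates, which is precisely what the paper delegates to the citations of \cite{WuChaiLi2017,ReissigEbert2018,DabbiccoReissig2014} without writing out. Your computations of the rates (including the threshold $m<\tfrac{6}{5}$ from the $L^{\frac{2m}{2-m}}$-integrability of $|\xi|^{-1}e^{-c|\xi|^2t}$ near $\xi=0$) match the statement, and your remark about keeping the projections $\pi_j(\eta)$ on the Fourier side so that Hausdorff--Young is applied to the original data is a worthwhile precaution the paper does not make explicit.
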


Now, we state our theorem for the global (in time) existence of small data energy solution to \eqref{semi linearproblem Kelvin-Voigt}. To begin with, let us introduce the balanced parameter $\tilde{p}_{\text{bal}}(m)$, $\tilde{\alpha}_1(m)$ and $\tilde{\tilde{\alpha}}_1(m)$ for $m\in\left[1,\frac{6}{5}\right)$ by
\begin{align}
\tilde{p}_{\text{bal}}(m):&=\frac{3+2m}{3-m},\label{sup01}\\
\tilde{\alpha}_{1}(m):&=\frac{m(2+3p_2+p_1p_2)}{2(p_1p_2-1)},\label{sup02}\\
\tilde{\tilde{\alpha}}_{1}(m):&=\frac{m(2+3(p_2+1)p_3+p_1p_2p_3)}{2(p_1p_2p_3-1)}.\label{sup03}
\end{align}
\begin{rem}
	Here we point out the relation between these parameters. If we consider the condition $\tilde{\alpha}_1(m)<\frac{3}{2}$, it also can be rewritten by
	\begin{equation*}
	p_2\left(p_1+1-\tilde{p}_{\text{bal}}(m)\right)>\tilde{p}_{\text{bal}}(m).
	\end{equation*}
	If we consider the condition $\tilde{\tilde{\alpha}}_1(m)<\frac{3}{2}$, it also can be rewritten by
	\begin{equation*}
	p_3\left(p_2\left(p_1+1-\tilde{p}_{\text{bal}}(m)\right)+1-\tilde{p}_{\text{bal}}(m)\right)>\tilde{p}_{\text{bal}}(m).
	\end{equation*}
\end{rem}
\begin{rem}
	From the recent paper \cite{ChenReissig2018}, we remark that the balanced exponent shown in \eqref{sup01} and the balanced parameters shown in \eqref{sup02} as well as \eqref{sup03} correspond to the balanced parameters to the weakly coupled system of semilinear viscoelastic elastic waves in 3D.
\end{rem}

One can follow the procedure of the proof of Theorem 5.5 in \cite{ChenReissig2018} to obtain the following theorem. Without loss of generality, we assume $p_1<p_2<p_3$.

\begin{thm}\label{GESDS02 KV 2D} Let $b^2>a^2>0$ in \eqref{semi linearproblem Kelvin-Voigt} and $m\in\left[1,\frac{6}{5}\right)$. Let us assume $1<p_1<p_2<p_3$, satisfying one of the following conditions:
	\begin{enumerate}
		\item we assume $\tilde{p}_{\text{bal}}(m)<p_1<p_2<p_3\leq 3$;
		\item we assume $\tilde{\alpha}_1(m)<\frac{3}{2}$ if $\frac{2}{m}\leq p_1\leq\tilde{p}_{\text{bal}}(m)<p_2<p_3\leq 3$;
		\item we assume $\tilde{\tilde{\alpha}}_1(m)<\frac{3}{2}$ if $\frac{2}{m}\leq p_1<p_2\leq\tilde{p}_{\text{bal}}(m)<p_3\leq 3$.
	\end{enumerate}
	Then, there exists a constant $\varepsilon_0>0$ such that for any initial data $\left(u_0^{(k)},u_1^{(k)}\right)\in\ml{A}_{m,0}\big(\mb{R}^3\big)$ for $k=1,2,3,$ with
	\begin{equation*}
	\left\|\left(u_0^{(1)},u_1^{(1)}\right)\right\|_{\ml{A}_{m,0}(\mb{R}^3)}+\left\|\left(u_0^{(2)},u_1^{(2)}\right)\right\|_{\ml{A}_{m,0}(\mb{R}^3)}+\left\|\left(u_0^{(3)},u_1^{(3)}\right)\right\|_{\ml{A}_{m,0}(\mb{R}^3)}\leq\varepsilon_0,
	\end{equation*}
	there is uniquely determined energy solution
	\begin{equation*}
	u\in\left(\ml{C}\left([0,\infty),H^1\big(\mb{R}^3\big)\right)\cap\ml{C}^1\left([0,\infty),L^2\big(\mb{R}^3\big)\right)\right)^3
	\end{equation*}
	to \eqref{semi linearproblem Kelvin-Voigt}. Moreover, the solutions satisfies the following estimates:
	\begin{equation*}
	\begin{split}
	\left\|u^{(k)}(t,\cdot)\right\|_{L^2(\mb{R}^3)}&\lesssim(1+t)^{-\frac{6-5m}{4m}+\tilde{\ell}_k}\sum\limits_{k=1}^3\left\|\left(u^{(k)}_0,u^{(k)}_1\right)\right\|_{\ml{A}_{m,0}(\mb{R}^3)},\\
	\left\||D| u^{(k)}(t,\cdot)\right\|_{L^2(\mb{R}^3)}+\left\|u_t^{(k)}(t,\cdot)\right\|_{L^2(\mb{R}^3)}&\lesssim(1+t)^{-\frac{6-3m}{4m}+\tilde{\ell}_k}\sum\limits_{k=1}^3\left\|\left(u^{(k)}_0,u^{(k)}_1\right)\right\|_{\ml{A}_{m,0}(\mb{R}^3)},
	\end{split}
	\end{equation*}
	where
	\begin{equation*}
	0\leq\tilde{\ell}_1=\tilde{\ell}_1(m,p_1):=\left\{
	\begin{aligned}
	&0&&\text{if}\,\,\,\,p_1>\tilde{p}_{\text{bal}}(m),\\
	&\epsilon_0&&\text{if}\,\,\,\,p_1=\tilde{p}_{\text{bal}}(m),\\
	&\tfrac{3-m}{2m}\left(\tilde{p}_{\text{bal}}(m)-p_1\right)&&\text{if}\,\,\,\,p_1<\tilde{p}_{\text{bal}}(m),
	\end{aligned}
	\right.
	\end{equation*}
	\begin{equation*}
	0\leq\tilde{\ell}_2=\tilde{\ell}_2(m,p_2):=\left\{
	\begin{aligned}
	&0&&\text{if}\,\,\,\,p_2>\tilde{p}_{\text{bal}}(m),\\
	&\epsilon_0&&\text{if}\,\,\,\,p_2=\tilde{p}_{\text{bal}}(m),\\
	&\tfrac{3-m}{2m}\left((\tilde{p}_{\text{bal}}(m)-p_1)p_2+(\tilde{p}_{\text{bal}}(m)-p_2)\right)&&\text{if}\,\,\,\,p_2<\tilde{p}_{\text{bal}}(m),
	\end{aligned}
	\right.
	\end{equation*}
	and $\tilde{\ell}_{3}=0$, represent the (no) loss of decay in comparison with the corresponding decay estimates for the solution to the linear Cauchy problem \eqref{linearproblem Kelvin-Voigt} (see Theorem \ref{Thm KV 3D energy}), with $\epsilon_0>0$ being an arbitrary small constant in the limit cases that $p_k=\tilde{p}_{\text{bal}}(m)$ for $k=1,2,3$.	
\end{thm}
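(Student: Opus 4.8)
The plan is to run the same contraction argument as in the proof of Theorem \ref{global existence KV 2D}, now for the three cyclically coupled components $u^{(1)},u^{(2)},u^{(3)}$, using the Helmholtz decomposition only to legitimise the linear decay estimates in 3D. First I would invoke Duhamel's principle: writing $K_0,K_1$ for the fundamental solutions of the linear problem \eqref{linearproblem Kelvin-Voigt}, a solution of \eqref{semi linearproblem Kelvin-Voigt} is a fixed point of
\begin{equation*}
Nu(t,x):=K_0(t,x)\ast_{(x)}u_0(x)+K_1(t,x)\ast_{(x)}u_1(x)+\int_0^tK_1(t-\tau,x)\ast_{(x)}f(u)(\tau,x)\,d\tau.
\end{equation*}
The orthogonal decomposition \eqref{Helmoholtz s0}--\eqref{Helmoholtz p0} reduces the linear propagators to those of the two viscoelastic damped wave equations with speeds $a$ and $b$, so that every $\left(L^2\cap L^m\right)-L^2$ and $L^2-L^2$ estimate of Theorem \ref{Thm KV 3D energy} is available both for $K_0\ast u_0+K_1\ast u_1$ and for $K_1\ast g$. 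Then I would introduce the solution space
\begin{equation*}
X(T):=\left(\ml{C}\big([0,T],H^1(\mb{R}^3)\big)\cap\ml{C}^1\big([0,T],L^2(\mb{R}^3)\big)\right)^3,\qquad \|u\|_{X(T)}:=\sup_{t\in[0,T]}\sum_{k=1}^3(1+t)^{-\tilde{\ell}_k}\widetilde{M}_k\big(t;u^{(k)}\big),
\end{equation*}
where $\widetilde{M}_k\big(t;u^{(k)}\big)$ carries the weight $(1+t)^{\frac{6-5m}{4m}}$ on $\|u^{(k)}(t,\cdot)\|_{L^2(\mb{R}^3)}$ and the weight $(1+t)^{\frac{6-3m}{4m}}$ on $\|\,|D|u^{(k)}(t,\cdot)\|_{L^2(\mb{R}^3)}+\|u_t^{(k)}(t,\cdot)\|_{L^2(\mb{R}^3)}$, exactly as the $M_k$ in the proof of Theorem \ref{global existence KV 2D} but with the decay rates of Theorem \ref{Thm KV 3D energy}, and where $\tilde{\ell}_1,\tilde{\ell}_2$ and $\tilde{\ell}_3=0$ are the loss-of-decay parameters from the statement. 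Theorem \ref{Thm KV 3D energy} together with $\tilde{\ell}_k\ge0$ gives at once $\|K_0\ast u_0+K_1\ast u_1\|_{X(T)}\lesssim\sum_{k=1}^3\|(u_0^{(k)},u_1^{(k)})\|_{\ml{A}_{m,0}(\mb{R}^3)}$.

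For the nonlinear contributions $u^{(k)}_{\non}$ I would bound $\|\partial_t^j|D|^lu^{(k)}_{\non}(t,\cdot)\|_{L^2(\mb{R}^3)}$ for $j+l=0,1$ by applying the $\left(L^2\cap L^m\right)-L^2$ estimate on $[0,t/2]$ and the $L^2-L^2$ estimate on $[t/2,t]$; the source terms are controlled by the classical Gagliardo--Nirenberg inequality, which is applicable because $\frac{2}{m}\le p_1$ and $p_k\le3$. Since in 3D the solution itself already decays, the interpolation of the two weights in $\widetilde{M}_k$ gives
\begin{equation*}
\big\|\,|u^{(3)}(\tau,\cdot)|^{p_1}\big\|_{L^m(\mb{R}^3)}\lesssim(1+\tau)^{\left(\tilde{\ell}_3-\frac{3-m}{2m}\right)p_1+\frac{3}{2m}}\|u\|_{X(\tau)}^{p_1},
\end{equation*}
and likewise for $|u^{(1)}|^{p_2}$ and $|u^{(2)}|^{p_3}$ with $\tilde{\ell}_1,\tilde{\ell}_2$ in place of $\tilde{\ell}_3$. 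Substituting this into the Duhamel integral reduces the whole estimate to the boundedness of integrals of the form $\int_0^{t/2}(1+\tau)^{\left(\tilde{\ell}-\frac{3-m}{2m}\right)p+\frac{3}{2m}}\,d\tau$ for the relevant pairs $(\tilde{\ell},p)$ together with an easy control of the $[t/2,t]$ piece; note that the threshold for convergence of $\int_0^\infty(1+\tau)^{-\frac{3-m}{2m}p+\frac{3}{2m}}\,d\tau$ is precisely $p>\tilde{p}_{\text{bal}}(m)=\frac{3+2m}{3-m}$.

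The decisive step is the case analysis, and this is where I expect the real work to lie, because the coupling $u^{(3)}\mapsto u^{(1)}\mapsto u^{(2)}\mapsto u^{(3)}$ is a $3$-cycle along which the loss of decay has to be transported; the parameters \eqref{sup01}--\eqref{sup03} are exactly what makes this possible. In case $(1)$, the hypothesis $\tilde{p}_{\text{bal}}(m)<p_1<p_2<p_3$ forces $\big(-\frac{3-m}{2m}\big)p_k+\frac{3}{2m}<-1$ for every $k$, so all the $[0,t/2]$ integrals converge and one may take $\tilde{\ell}_1=\tilde{\ell}_2=\tilde{\ell}_3=0$. In case $(2)$ the borderline exponent $p_1\le\tilde{p}_{\text{bal}}(m)$ forces a one-step loss $\tilde{\ell}_1>0$, and the hypothesis $\tilde{\alpha}_1(m)<\frac{3}{2}$ --- equivalently $p_2\big(p_1+1-\tilde{p}_{\text{bal}}(m)\big)>\tilde{p}_{\text{bal}}(m)$ --- is precisely the inequality that lets the estimates for the remaining two components still close with $\tilde{\ell}_2=\tilde{\ell}_3=0$. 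In case $(3)$ the loss must be accumulated twice, first into $\tilde{\ell}_1$ and then, multiplied by $p_2$, into $\tilde{\ell}_2$, which is exactly the nested form of $\tilde{\ell}_2$ in the statement, while $\tilde{\ell}_3=0$ because $p_3>\tilde{p}_{\text{bal}}(m)$ always; here $\tilde{\tilde{\alpha}}_1(m)<\frac{3}{2}$, i.e.\ $p_3\big(p_2(p_1+1-\tilde{p}_{\text{bal}}(m))+1-\tilde{p}_{\text{bal}}(m)\big)>\tilde{p}_{\text{bal}}(m)$, is what closes the last estimate. Verifying in each case that the chosen $\tilde{\ell}_k$ make the relevant $[0,t/2]$ integrals bounded (or produce at most the declared power of $(1+t)$) and that the $[t/2,t]$ pieces are dominated is routine but delicate bookkeeping; it yields $\|Nu\|_{X(T)}\lesssim\sum_{k=1}^3\|(u_0^{(k)},u_1^{(k)})\|_{\ml{A}_{m,0}(\mb{R}^3)}+\sum_{k=1}^3\|u\|_{X(T)}^{p_k}$ uniformly in $T$. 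An entirely parallel computation with H\"older's inequality and Gagliardo--Nirenberg gives $\|Nu-N\bar u\|_{X(T)}\lesssim\|u-\bar u\|_{X(T)}\sum_{k=1}^3\big(\|u\|_{X(T)}^{p_k-1}+\|\bar u\|_{X(T)}^{p_k-1}\big)$, and Banach's fixed-point theorem then produces the unique global solution in $X(\infty)$ and the stated decay estimates provided $\varepsilon_0$ is chosen small enough.
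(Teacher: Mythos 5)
Your proposal is correct and follows essentially the same route as the paper, which itself only sketches this theorem by referring to the contraction argument of Theorem \ref{global existence KV 2D} and the procedure of Theorem 5.5 in \cite{ChenReissig2018}: Duhamel's principle, the weighted solution space built on the linear decay rates of Theorem \ref{Thm KV 3D energy}, Gagliardo--Nirenberg control of the nonlinearities under $\frac{2}{m}\leq p_1$ and $p_k\leq 3$, and the three-case bookkeeping that transports the loss of decay along the cycle $u^{(3)}\mapsto u^{(1)}\mapsto u^{(2)}\mapsto u^{(3)}$. Your exponent arithmetic (in particular $\big(\tilde{\ell}-\frac{3-m}{2m}\big)p+\frac{3}{2m}$ and the threshold $\tilde{p}_{\text{bal}}(m)=\frac{3+2m}{3-m}$) and the identification of $\tilde{\alpha}_1(m)<\frac{3}{2}$, $\tilde{\tilde{\alpha}}_1(m)<\frac{3}{2}$ as the closing conditions match the stated $\tilde{\ell}_1,\tilde{\ell}_2,\tilde{\ell}_3$ exactly.
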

\section{Concluding remarks}\label{concluding remarks section}
\begin{rem}
	One may derive asymptotic profiles for elastic waves with Kelvin-Voigt damping in 3D, that is the model \eqref{linearproblem Kelvin-Voigt}. Before doing this, one may apply diagonalization procedure (e.g. \cite{ChenReissig2018}), or asymptotic expansions of eigenvalues and their eigenprojections (e.g. the method used in Section \ref{Section KV 2D spectral}, or \cite{IdeHaramotoKawashima}) to get representation of solutions in the Fourier space. Basing on this representation, one may derive asymptotic profiles in a framework of weighted $L^1$ data by using the tools developed in \cite{Ikehata2014}.
\end{rem}
\begin{rem}
	In Section \ref{global existence small data solution KV 2D} we have proved the global (in time) existence of small data energy solution to \eqref{semilinear problem Kelvin-Voigt 2D}. One also can prove the global (in time) existence of small data Sobolev solution
	\begin{equation*}
	u\in\left(\ml{C}\left([0,\infty),H^{s+1}\big(\mb{R}^2\big)\right)\cap\ml{C}^1\left([0,\infty),H^s\big(\mb{R}^2\big)\right)\right)^2
	\end{equation*}
	to \eqref{semilinear problem Kelvin-Voigt 2D} with initial data taking from $\ml{A}_{m,s}\big(\mb{R}^2\big)$ for $s>0$ and $m\in[1,2)$ by following the next strategy. 
	
	For the high regular data and even not embedded in $L^{\infty}\big(\mb{R}^2\big)$ (i.e. $0<s<1$), we can apply the fractional Gagliardo-Nirenberg inequality, the fractional chain rule, the fractional Leibniz rule (c.f. \cite{RunstSickel1996,Hajaiej2011,Grafakos2014,PalmieriReissig2018}). More precisely, we apply the fractional chain rule to get the estimate for the nonlinear term in Riesz potential spaces $\dot{H}^s\big(\mb{R}^2\big)$ with $s\in(0,1)$. For example,
	\begin{equation*}
	\left\|\big|u^{(2)}(\tau,\cdot)\big|^{p_1}\right\|_{\dot{H}^s(\mb{R}^2)}\lesssim\left\|u^{(2)}(\tau,\cdot)\right\|_{L^{q_1}(\mb{R}^2)}^{p_1-1}\left\|u^{(2)}(\tau,\cdot)\right\|_{\dot{H}^s_{q_2}(\mb{R}^2)},
	\end{equation*}
	where $\frac{p_1-1}{q_1}+\frac{1}{q_2}=\frac{1}{2}$ and $p_1>\lceil s\rceil$. Here, $\lceil \cdot\rceil$ denotes the smallest integer large than a given number, $\lceil s\rceil:=\min\left\{\tilde{s}\in\mb{Z}:s\leq\tilde{s}\right\}$. Then, one can estimate the terms on the right-hand sides by the fractional Gagliardo-Nirenberg inequality.\\
	To estimates the difference between the nonlinearities, we set $g\left(u^{(2)}\right)=u^{(2)}\big|u^{(2)}\big|^{p_1-2}$ to get
	\begin{equation*}
	\big|u^{(2)}(\tau,x)\big|^{p_1}-\big|\tilde{u}^{(2)}(\tau,x)\big|^{p_1}=p_1\int_0^1\left(u^{(2)}(\tau,x)-\tilde{u}^{(2)}(\tau,x)\right)g\left(\nu u^{(2)}(\tau,x)+(1-\nu)\tilde{u}^{(2)}(\tau,x)\right)d\nu.
	\end{equation*}
	Then, applying the fractional Leibniz rule we obtain
	\begin{equation*}
	\begin{split}
	&\left\|\big|u^{(2)}(\tau,\cdot)\big|^{p_1}-\big|\tilde{u}^{(2)}(\tau,\cdot)\big|^{p_1}\right\|_{\dot{H}^s(\mb{R}^2)}\\
	&\lesssim\int_0^1\left\|u^{(2)}(\tau,\cdot)-\tilde{u}^{(2)}(\tau,\cdot)\right\|_{\dot{H}^s_{r_1}(\mb{R}^2)}\left\|g\left(\nu u^{(2)}(\tau,\cdot)+(1-\nu)\tilde{u}^{(2)}(\tau,\cdot)\right)\right\|_{L^{r_2}(\mb{R}^2)}d\nu\\
	&\quad+\int_0^1\left\|u^{(2)}(\tau,\cdot)-\tilde{u}^{(2)}(\tau,\cdot)\right\|_{L^{r_3}(\mb{R}^2)}\left\|g\left(\nu u^{(2)}(\tau,\cdot)+(1-\nu)\tilde{u}^{(2)}(\tau,\cdot)\right)\right\|_{\dot{H}^s_{r_4}(\mb{R}^2)}d\nu,
	\end{split}
	\end{equation*}
	where $\frac{1}{r_1}+\frac{1}{r_2}=\frac{1}{r_3}+\frac{1}{r_4}=\frac{1}{2}$. We next use the fractional Gagliardo-Nirenberg inequality again to estimate all terms on the right-hand side. Thus, after choosing suitable parameters $q_1,q_2,r_1,r_2,r_3,r_4$, a new lower bound $1+\lceil s\rceil$ for the exponent $p_1$ comes. 
	
	For the large regular initial data with $s>1$, it allows us to use the fractional powers (c.f. \cite{DabbiccoEbertLucente}) and the continuous embedding $H^s\big(\mb{R}^2\big)\hookrightarrow L^{\infty}\big(\mb{R}^2\big)$. At this time, we need to give a new lower bound $1+s$ for the exponents.
\end{rem}
\begin{rem}
	In Theorem \ref{GESDS02 KV 2D} we only show the global existence result for the energy solution to \eqref{semi linearproblem Kelvin-Voigt} with initial data belonging to $\ml{A}_{m,0}\big(\mb{R}^3\big)$ for $m\in\left[1,\frac{6}{5}\right)$. If one is interested in initial data taking from $\ml{A}_{m,s}\big(\mb{R}^3\big)$ for all $m\in[1,2)$ and $s\geq0$, one can read Section 5 of the recent paper \cite{ChenReissig2018}.
\end{rem}

\section{Acknowledgments}
	The PhD study of the author is supported by S\"achsiches Landesgraduiertenstipendium. The author thank his supervisor Michael Reissig for the suggestions in the preparation of the final version and the anonymous referees for carefully reading the paper.



\nocite{*} 


\end{document}